\DeclarePairedDelimiter{\ceil}{\lceil}{\rceil}
\newtheorem{theorem}{Theorem}[subsection]
\newtheorem{proposition}[theorem]{Proposition}
\newtheorem{corollary}[theorem]{Corollary}
\newtheorem{lemma}[theorem]{Lemma}
\theoremstyle{definition}
\newtheorem{definition}[theorem]{Definition}
\newtheorem{example}[theorem]{Example}
\newtheorem{remark}[theorem]{Remark}
\numberwithin{equation}{subsection}
\newcommand{\gfp}{G(\mathbb{F}_p)}
\newcommand{\gfpr}{G(\mathbb{F}_{q})}
\newcommand{\res}{\operatorname{res}}
\newcommand{\ind}{\operatorname{ind}}
\newcommand{\Ext}{\operatorname{Ext}}
\newcommand{\into}{\hookrightarrow}
\newcommand{\opH}{\operatorname{H}}
\newcommand{\Hom}{\operatorname{Hom}}
\newcommand{\ga}{\gamma}
\newcommand{\la}{\lambda}
\newcommand{\al}{\alpha}
\newcommand{\ta}{\tilde{\alpha}}
\newcommand{\si}{\sigma}
\begin{document}

\title[Extensions for Finite Chevalley Groups III: Rational and Generic Cohomology]
{\bf Extensions for Finite Chevalley Groups III: Rational and Generic Cohomology}

\begin{abstract} Let $G$ be a connected reductive algebraic group and $B$ be a Borel subgroup defined over an algebraically closed 
field of characteristic $p>0$. In this paper, the authors study the existence of generic $G$-cohomology and its stability with rational $G$-cohomology groups via the use of 
methods from the authors' earlier work. New results on the vanishing of $G$ and $B$-cohomology groups are presented. Furthermore, 
vanishing ranges for the associated finite group cohomology of $G({\mathbb F}_{q})$ are established which generalize earlier work of Hiller, in addition to 
stability ranges for generic cohomology which improve on seminal work of Cline, Parshall, Scott and van der Kallen. 
\end{abstract}

\author{\sc Christopher P. Bendel}
\address
{Department of Mathematics, Statistics and Computer Science\\
University of
Wisconsin-Stout \\
Menomonie\\ WI~54751, USA}
\email{bendelc@uwstout.edu}

\author{\sc Daniel K. Nakano}
\address
{Department of Mathematics\\ University of Georgia \\
Athens\\ GA~30602, USA}
\thanks{Research of the second author was supported in part by
NSF grant DMS-1002135}
\email{nakano@math.uga.edu}

\author{\sc Cornelius Pillen}
\address{Department of Mathematics and Statistics \\ University
of South
Alabama\\
Mobile\\ AL~36688, USA}
\thanks{Research of the third author was supported in part by a Simons Foundation Collaboration Grant}
\email{pillen@southalabama.edu}

\date\today
\thanks{2010 {\em Mathematics Subject Classification.} Primary
20J06;
Secondary 20G10}
\maketitle
\section{Introduction}

\subsection{}\label{SS:BNPintro} Let $G$ be a connected reductive algebraic group (scheme) defined over the  finite field  ${\mathbb F}_p$, 
and let $k$ be an algebraically closed field of characteristic $p$. There are two module categories for finite subgroup schemes which have close relationships to the representations of $G$ over $k$. 
One category is the modules over $G_{r}$ (the $r$th Frobenius kernel of $G$). In the case when $r=1$, this category is equivalent to 
restricted representations for the restricted Lie algebra ${\mathfrak g}=\text{Lie }G$. The other category is 
the category of representations for $G({\mathbb F}_{q})$ where $G({\mathbb F}_{q})$ is the finite group of Lie type obtained by taking 
the ${\mathbb F}_{q}$ rational points of $G$ (${\mathbb F}_{q}$ is the field of $p^{r}$-elements). Note that, for a given rational $G$-module $M$, one can consider 
the restriction of $M$ to either $G_{r}$ or to the finite group $G({\mathbb F}_{q}) $. Although there is 
no direct functorial connection between the categories of $G_{r}$-modules and $G({\mathbb F}_q) $-modules, there are 
strong interrelationships between the representation theories of $\text{Mod}(G)$, $\text{Mod}(G_{r})$ and $\text{Mod}(G({\mathbb F}_{q}))$ (cf. \cite{Hu}). 

In prior work, the authors (BNP) (cf. {\sf [BNP1-9]}) established fundamental connections between 
the cohomology theories of reductive algebraic groups, finite Chevalley groups and Frobenius kernels. The main 
idea entails the use of the induction functor $\text{ind}_{G({\mathbb F}_{q})}^{G}-$ and truncated variations of it. If $M$ is a rational $G$-module, then 
there is a natural short exact sequence: $0\rightarrow M \rightarrow \text{ind}_{G({\mathbb F}_{q})}^{G} M \rightarrow N \rightarrow 0$. This induces 
a long exact sequence in cohomology \cite[(2.1.3)] {UGA2}: 
\begin{equation} \label{eq:lesrestriction}
\begin{array}{cclclclcc}
0 & \longrightarrow & \operatorname{Hom}_G(k,M) & \stackrel{\res}{\longrightarrow} & \operatorname{Hom}_{G({\mathbb F}_{q})}(k,M) &\longrightarrow & \operatorname{Hom}_G(k,N) &&\\
& \longrightarrow & \operatorname{H}^1(G,M) & \stackrel{\res}{\longrightarrow} & \operatorname{H}^1(G({\mathbb F}_{q}),M) & \longrightarrow & \operatorname{H}^{1}(G,N) &&\\
& \longrightarrow & \operatorname{H}^2(G,M) & \stackrel{\res}{\longrightarrow} & \operatorname{H}^2(G({\mathbb F}_{q}),M) & \longrightarrow & \operatorname{H}^{2}(G,N) &&\\
& \longrightarrow & \operatorname{H}^3(G,M) & \stackrel{\res}{\longrightarrow} & \operatorname{H}^3(G({\mathbb F}_{q}),M) & \longrightarrow & \operatorname{H}^{3}(G,N) &\longrightarrow &\cdots.  
\end{array}
\end{equation}

The main advantage to this approach is that it neatly organizes information about the restriction maps in cohomology $\text{res}:\operatorname{H}^{n}(G,M)\rightarrow 
\operatorname{H}^{n}(G({\mathbb F}_{q}),M)$ and the obstructions to studying this map for each degree $n$. The first column gives information about the $G$-cohomology and 
the second about the $G({\mathbb F}_{q})$-cohomology. In the third column, the module $N$ has a $G$-filtration 
with factors of the form $M\otimes H^0(\lambda) \otimes H^{0}(\lambda^{\star})^{(r)}$ (cf. \cite[Prop. 2.4.1]{BNP8})\footnote{In the case of twisted groups, this fact has been established in 
\cite[Proposition 3.1.2]{BNPPSS} by an argument involving the Mackey Decomposition Theorem.} Here, $H^0(\la)$ (which is dual to a Weyl module) is the standard module induced from a one dimensional $B$-module (for a Borel subgroup $B$ of $G$) and $\la^{\star}$ is the weight dual to $\lambda$ (cf. Section \ref{SS:notation}). The $G$-cohomology 
$\operatorname{H}^{\bullet}(G,N)$ can then be analyzed by using the Lyndon-Hochschild-Serre spectral sequence involving $G_{r}$-cohomology and the 
geometry of the flag variety $G/B$. For example, in \cite[Thm. 3.3.1]{BNP8} we showed that an upper bound on the dimension of $\operatorname{H}^n(G({\mathbb F}_{p}),k)$ 
involves the combinatorics of the well-studied Kostant Partition Function, thus reducing the question of the vanishing of the finite group 
cohomology to a question involving the combinatorics of the underlying root system $\Phi$.


\subsection{}  In 1977, Cline, Parshall, Scott and van der Kallen \cite{CPSvdK} proved a groundbreaking result about the behavior of $G$-cohomology in relation 
to the finite group cohomology for $G({\mathbb F}_{q})$. If $V$ is a finite-dimensional rational $G$-module, then for a fixed $n\geq 0$ and $s$ and $r$ sufficiently large the restriction 
map
$$\text{res}:\operatorname{H}^{n}(G,V^{(s)})\rightarrow \operatorname{H}^{n}(G({\mathbb F}_{q}),V^{(s)})$$ 
is an isomorphism. Note that the finite group does not see the twist so 
$$\operatorname{H}^{n}(G({\mathbb F}_{q}),V^{(s)})\cong \operatorname{H}^{n}(G({\mathbb F}_{q}),V).$$
One important consequence of this theorem is, as $r$ increases, the cohomology groups $\text{H}^{n}(G({\mathbb F}_{q}),V)$ obtain a stable or generic value 
$\text{H}^{n}_{\text{gen}}(G,V)$ (also known as the generic cohomology). It is also shown in \cite{CPSvdK} what conditions for $s$ guarantee that 
$\text{H}^{n}(G,V^{(s)})\cong \text{H}^{n}_{\text{gen}}(G,V^{(s)})$ for $n=1, 2$. In recent work, Parshall, Scott and Stewart \cite[Theorem 5.8]{PSS} have shown that one can realize generic cohomology for simple modules 
with $q$-restricted dominant weights by considering the rational cohomology of simple modules with so-called shifted weights. 

The results in \cite{CPSvdK} have been used to make computations for $G({\mathbb F}_{q})$-cohomology. However, the BNP-machinery as described in Section~\ref{SS:BNPintro} lends itself 
better for this purpose because no twisting by the Frobenius is required. The BNP-techniques have been used for a variety of applications that have answered 
open questions and significantly improved known results. These include proving that self extensions vanish \cite{BNP2, BNP3}, locating the first non-trivial cohomology classes \cite{BNP8,BNP9}, computing low dimensional cohomology groups for simple modules \cite{UGA1,UGA2}, and bounding $\text{Ext}$-groups between simple modules for finite groups of Lie type \cite{BNPPSS}. Other applications appear in \cite{WW}. When these ideas were first introduced ten years ago, it was suspected that there should be a connection to the CPSvdK rational and generic cohomology results.  In this paper, we will further demonstrate that the BNP-program is widely applicable and encompassing by proving 
that the CPSvdK rational and generic stability can be obtained by using our approach. 

The paper is organized as follows. In Section 2, we establish important conditions on the existence of weights in the $B_{r}$-cohomology for certain one-dimensional representations. This enables us to 
prove powerful vanishing results on $G_{r}$, $B$ and $G$ extensions in Sections 3 and 4. For example, one key result in this paper is on the vanishing behavior of 
the $B$-cohomology groups $\text{H}^{n}(B,\mu-p^{s}\lambda)$ for weights $\lambda, \mu$ (cf. Proposition \ref{B:vanishing}). The determination of the $B$-cohomology groups with coefficients in a one-dimensional representation is still an unsolved problem and one of the first steps to understanding the structure of the line bundle cohomology of the flag variety $G/B$. 

In Section 5, the new vanishing results of Sections 3 and 4 are used to show that the cohomology groups $\operatorname{H}^{n}(G,V^{(s)})$ attain a 
stable value for $s$ large. Our proof appears to be the first proof that does not use finite group cohomology and the Main Theorem of \cite{CPSvdK}\footnote{The authors confirmed this claim
with Henning Andersen, Brian Parshall and Leonard Scott}. We then show that there is a striking connection between rational stability and the existence of a good filtration on the 
cohomology of the first Frobenius kernels $\operatorname{H}^{\bullet}(G_{1},k)$. Assuming such a filtration, one can show that $\operatorname{H}^{n}(G,V^{(s)})$ stabilizes and give robust ranges for stability (cf. Theorem 5.2.1). At the end of the section, we demonstrate how these results can be used to extend the work in \cite[Section 7]{CPSvdK}. 

The CPSvdK approach to rational and generic cohomology relies on restricting to the $B({\mathbb F}_{q})$-cohomology.  Our approach is different in that we analyze the long exact sequence  (\ref{eq:lesrestriction}) and show vanishing of terms in the right hand column.  This is done in Section 6. See in particular Proposition \ref{vanishing}.  An immediate consequence of Proposition \ref{vanishing}, presented in Section 7, is that one can establish general vanishing ranges for the cohomology group $\operatorname{H}^{\bullet}(G({\mathbb F}_{q}),k)$. In several cases, the ranges are shown to be sharp. These results improve on earlier work of Hiller \cite{H}.  

In Section 8, again using Proposition \ref{vanishing}, we determine bounds on $r, s$ as above so that $\opH^n(G,V^{(s)}) \cong \opH^n(\gfpr,V^{(s)})$ (cf. Theorems \ref{theorem:generic3}, 
\ref{theorem:generic4}, and \ref{theorem:generic5}).  Prior known bounds on $r$ and $s$ (cf. \cite{CPSvdK}, \cite{FP}, \cite{F}) depend highly on the given root system. One very important feature about our work 
is that we provide uniform bounds (which only depend on $V$, $n$ and $p$) that do not depend on the root system.  Later we demonstrate that in many cases our uniform bounds are significantly stronger than  those provided by the Main Theorem of \cite{CPSvdK}.


\subsection{Acknowledgements} The authors would like to acknowledge the American Institute of Mathematics for its support during the 2012 Workshop on Cohomology Bounds and Growth Rates. The first author thanks the University of South Alabama for its hospitality during work on this paper. 
We would also like to thank Wilberd van der Kallen and David Stewart for comments and suggestions that were incorporated into the present version of this paper. 


\subsection{Notation}\label{SS:notation}  Throughout this paper, the following basic notation will be used. 
\begin{enumerate}

\item[(1)] $k$: an algebraically closed field of characteristic $p>0$.

\item[(2)] $G$: a simple, simply connected algebraic group which is defined 
and split over the finite prime field ${\mathbb F}_p$ of characteristic $p$. The assumption
that $G$ is simple (equivalently, its root system $\Phi$ is irreducible) is largely one of convenience.
All the results of this paper extend easily to the semisimple, simply connected case.

\item[(3)] $F:G\rightarrow G$: the Frobenius morphism. 

\item[(4)] $G_r=\text{ker }F^{r}$: the $r$th Frobenius kernel of $G$. 

\item[(5)] $G^{(r)}$: the $r$th Frobenius twist of $G$; $G^{(r)} \cong G/G_r$.

\item[(6)] $G({\mathbb F}_{q})$: the associated finite Chevalley group where ${\mathbb F}_q$ is the field with $q = p^r$ elements. 

\item[(7)] $T$: a maximal split torus in $G$. 

\item[(8)] $\Phi$: the corresponding (irreducible) root system associated to $(G,T)$. When referring to short and long roots, when a root system has roots of only one length, 
all roots shall be considered as both short and long.

\item[(9)] $\Pi=\{\alpha_1,\dots,\alpha_n\}$: the set of simple roots (Bourbaki ordering).  

\item[(10)] $\Phi^{\pm}$: the positive (respectively, negative) roots.  

\item[(11)] $\alpha_0$: the maximal short root; $\ta$: the maximal root.

\item[(12)]  $B$: a Borel subgroup containing $T$ corresponding to the negative roots. 

\item[(13)] $U$: the unipotent radical of $B$.

\item[(14)] $\mathbb E$: the Euclidean space spanned by $\Phi$ with inner product $\langle\,,\,\rangle$ normalized so that $\langle\alpha,\alpha\rangle=2$ for $\alpha \in \Phi$ any short root.

\item[(15)] $\alpha^\vee=2\alpha/\langle\alpha,\alpha\rangle$: the coroot of $\alpha\in \Phi$.

\item[(16)] $\rho$: the Weyl weight defined by $\rho=\frac{1}{2}\sum_{\alpha\in\Phi^+}\alpha$.

\item[(17)] $h$: the Coxeter number of $\Phi$, given by $h=\langle\rho,\alpha_0^{\vee} \rangle+1$.

\item[(18)] $W=\langle s_{\alpha_1},\dots,s_{\alpha_n}\rangle\subset{\mathbb O}({\mathbb E})$: the Weyl group of $\Phi$, generated by the orthogonal reflections $s_{\alpha_i}$, $1\leq i\leq n$. For $\alpha\in \Phi$, $s_\alpha:{\mathbb E} \to{\mathbb E}$ is the orthogonal reflection in the hyperplane $H_\alpha\subset \mathbb E$ of vectors orthogonal to $\alpha$.

\item[(19)] $X(T)=\mathbb Z \omega_1\oplus\cdots\oplus{\mathbb Z}\omega_n$: the weight lattice, where the fundamental dominant weights $\omega_i\in{\mathbb E}$ are defined by $\langle\omega_i,\alpha_j^\vee\rangle=\delta_{ij}$, $1\leq i,j\leq n$.

\item[(20)] $X(T)_{+}={\mathbb N}\omega_1+\cdots+{\mathbb N}\omega_n$: the cone of dominant weights.

\item[(21)] $X_{r}(T)=\{\lambda\in X(T)_+: 0\leq \langle\lambda,\alpha^\vee\rangle<p^{r},\,\,\forall \alpha\in\Pi\}$: the set of $p^{r}$-restricted dominant weights. 

\item[(22)] $\leq$ on $X(T)$: a partial ordering of weights, for $\lambda, \mu \in X(T)$, $\mu\leq \lambda$ if and only if $\lambda-\mu$ is a linear combination 
of simple roots with non-negative integral coefficients. 

\item[(23)] $\lambda^\star : = -w_0\lambda$: where $w_0$ is the longest word in the Weyl group $W$ and $\lambda\in X(T)$. 

\item[(24)] $M^{(s)}$:  the module obtained by composing the underlying representation for 
a rational $G$-module $M$ with $F^{s}$.

\item[(25)] $H^0(\lambda) := \operatorname{ind}_B^G\lambda$, $\lambda\in X(T)_{+}$: the induced module whose character is provided by Weyl's character formula.  

\item[(26)] $V(\lambda)$, $\lambda\in X(T)_{+}$: the Weyl module of highest weight $\lambda$. Thus, $V(\lambda)\cong H^0(\lambda^\star)^*$.

\item[(27)] $L(\lambda)$: the irreducible finite dimensional $G$-module with highest weight $\lambda\in X(T)_{+}$. 

\item[(28)] Type $A_1$ weight notation: for $\Phi$ of type $A_1$, let $\omega := \omega_1$ denote the unique fundamental dominant weight, and $\al$ denote the unique simple root.  For a weight $\si \in X(T)$, $\si = \langle \si,\al^{\vee}\rangle\omega$. As such, we will equate $\si$ with $\langle \si,\al^{\vee}\rangle$ and simply denote weights in $X(T)$ by integers.

\end{enumerate}

\section{$B_s$-cohomology}


\subsection{}\label{SS:spectral} In this section we will investigate cohomology for the $s$th Frobenius kernel of $B$. We use here $s$ instead of $r$ to avoid confusion when 
we will apply these results when twisting a rational module $s$ times. Let $M$ be a rational $B_{s}$-module and $\mathfrak{u}=\text{Lie }U$. 
We make use of the following spectral sequence \cite[I.9.14]{Jan}. For odd $p$ define
\begin{equation}\label{Ur:spectral-odd}
E_1^{i,j}=\bigoplus M\otimes S^{a_0}(\mathfrak{u}^*)
\otimes S^{a_1}(\mathfrak{u}^*)^{(1)}
\otimes \; \cdots \; \otimes S^{a_s}(\mathfrak{u}^*)^{(s)}\otimes \Lambda^{b_0}(\mathfrak{u}^*) \otimes \Lambda^{b_1}(\mathfrak{u}^*)^{(1)}
\; \cdots\; \otimes \Lambda^{b_{s}}(\mathfrak{u}^*)^{(s)}.
\end{equation}
The summation runs over all $(s+1)$-tuples $\{a_0, \dots , a_s\}$ and $\{b_0, \dots ,b_{s}\}$ of non-negative integers satisfying $a_0=b_s=0$, 
\begin{equation}\label{U:indices-odd}
i +j = \sum_{n= 0}^{s}(2a_n +b_n) \mbox{   and     } i = \sum_{n= 1}^{s} \left(a_n p^n + b_{n-1} p^{n-1}\right).
\end{equation}
If $p=2$ set
\begin{equation}\label{Ur:spectral-even}
E_1^{i,j}=\bigoplus M\otimes S^{a_1}(\mathfrak{u}^*)
\otimes S^{a_2}(\mathfrak{u}^*)^{(1)}
\otimes \; \cdots \; \otimes S^{a_s}(\mathfrak{u}^*)^{(s-1)}.
\end{equation}
The summation runs over all $s$-tuples $\{a_1, \dots , a_s\}$  of non-negative integers satisfying
\begin{equation}\label{U:indices-even}
i +j = \sum_{n =1}^{s} a_n  \mbox{   and     } i = \sum_{n= 1}^{s} a_n  2^{n-1}.
\end{equation}
Then we obtain a spectral sequence $E_1^{i,j} \Rightarrow \opH^{i+j}(U_s, M).$

All differentials in the above are $T$-module maps. Since $\operatorname{H}^{\bullet}(U_{s},M)^{T_{s}}\cong \operatorname{H}^{\bullet}(B_{s},M)$ (cf. \cite[I.6.9]{Jan}), it 
follows that one obtains a spectral sequence by applying $T_s$-invariants:
\begin{equation}\label{Br:spectral}
(E_1^{i,j})^{T_s} \Rightarrow \opH^{i+j}(B_s, M).
\end{equation}


\subsection{Weight calculations}\label{SS:weight} In this section, we analyze the  combinatorics for the $E_{1}$-terms of the spectral sequence defined in the previous section 
and give upper bounds on the weights appearing in certain $B_s$-cohomology spaces.  

We first define two invariants which measure the size of the weights in a rational $T$-module. 
\begin{definition} For a rational $T$-module $M,$ 
set 
\begin{itemize} 
\item[(1)]
$b(M) = \max \{\langle \sigma, {\beta}^{\vee}\rangle\; |\; \sigma \text{  a weight of } M \text{ and } \beta \text{ is a long root in } \Phi\}$
\item[(2)] $t(M) = \lceil \log_p(b(M)+1)\rceil$.
\end{itemize}
\end{definition}
For a $G$-module $M$, we have $b(M)=\max \{\langle \sigma, {\ta}^{\vee}\rangle\; |\; \sigma \text{  a weight of } M \}$. 
Given a weight $\la \in X(T)$, one can view this as a one dimensional $B$-module by letting $U$ act trivially. We denote this module simply by $\la$. If $\la$ is dominant, then $b(\la) =  \langle \la, {\ta}^{\vee}\rangle.$
 If $\la$ is dominant and nonzero, then $t=t(\la) >0$ and there exists a unique  $t$-tuple $\{\la_0, ... , \la_{t-1} \}$ with $b(\la) = \sum_{n=0}^{t-1} \la_n p^n$ and $0 \leq \la_n \leq p-1$. Moreover,  $\la_{t-1} > 0.$
 

\begin{proposition}\label{Br:weight}  Let $\la \in X(T)_+$  with $\la \neq 0$ and $\mu \in X(T)$. Set $t = t(\la)$ and  assume that $m \geq 0$, $f\geq t(\mu)$, and $s \geq t$.  Define the $t$-tuple $\{\la_0, ... , \la_{t-1} \}$ via $\langle \la, \ta^{\vee}\rangle = \sum_{n=0}^{t-1}\la_n p^n$ and $0 \leq \la_n \leq p-1$.
\begin{itemize}
\item[(a)] If $p=2$ and $\ga$ is a weight of $\opH^m(B_{s+f}, p^s\mu+\la)^{(-(s+f))},$ then 
\begin{equation}\label{Br:weight-even}
b(\ga) \leq m - (s-t).
\end{equation}
Equality can hold only if $f=t(\mu).$
\item[(b)] If $p$ is odd  and $\ga$ is a weight of $\opH^m(B_{s+f}, p^s\mu+\la)^{(-(s+f))},$ then 
\begin{equation}\label{Br:weight-odd}
b(\ga) \leq \min\{m - (s-t+1)(p-2) + \la_{t-1}, m-(s-t)(p-2)\}.
\end{equation}
Equality can hold only if $f=t(\mu).$
\end{itemize}
\end{proposition}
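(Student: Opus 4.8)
The plan is to analyze the $E_1$-terms of the spectral sequence \eqref{Br:spectral} directly, tracking the weights that can survive to $E_\infty$ and hence contribute to $\opH^m(B_{s+f},p^s\mu+\la)$. First I would set $M = p^s\mu + \la$ (viewed as a one-dimensional $B_{s+f}$-module with trivial $U$-action) and write out a typical summand of $E_1^{i,j}$ from \eqref{Ur:spectral-odd} (resp. \eqref{Ur:spectral-even} when $p=2$), which is $M$ tensored with a product of twisted symmetric and exterior powers $S^{a_n}(\mathfrak u^*)^{(n)}$ and $\Lambda^{b_n}(\mathfrak u^*)^{(n)}$. After applying $T_{s+f}$-invariants and then untwisting by $F^{s+f}$, a weight $\ga$ of $\opH^m(B_{s+f},M)^{(-(s+f))}$ must satisfy a congruence: $p^{s+f}\ga \equiv p^s\mu + \la + (\text{a sum of negative roots, each scaled by the appropriate }p^n)\pmod{p^{s+f}X(T)}$, where the total degree of the root-contribution is governed by \eqref{U:indices-odd} (resp. \eqref{U:indices-even}). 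The key point is to pair with $\ta^\vee$ (a long coroot) and extract an inequality on $b(\ga)$.

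The main combinatorial step is the following. Pairing the weight identity with $\ta^\vee$ and using that $\langle\beta,\ta^\vee\rangle \geq -1$ for every positive root $\beta$ (so each exterior factor $\Lambda^{b_n}$ contributes at least $-b_n$ and each symmetric factor $S^{a_n}$ at least $-a_n$ after pairing, with the worst case coming from roots $\beta$ with $\langle\beta,\ta^\vee\rangle=-1$), together with $m = i+j = \sum_n(2a_n+b_n)$ (odd $p$) or $m=\sum_n a_n$ ($p=2$), one obtains that $b(\ga)$ is bounded by $m$ minus a correction term. The correction term comes from the ``carrying'' in the base-$p$ arithmetic: because $b(\la)=\sum_{n=0}^{t-1}\la_n p^n$ with $\la_{t-1}>0$ and $\langle\la,\ta^\vee\rangle \le b(\la)$, one needs a minimum ``cost'' in the indices $a_n, b_n$ for $n$ in the range $t,\dots,s$ (roughly $s-t$ or $s-t+1$ tensor factors, each forcing the degree $m$ up by at least $2$ in the odd case, hence the $(p-2)$ factors after optimizing). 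The appearance of $\la_{t-1}$ in the first bound of \eqref{Br:weight-odd} reflects that in the borderline case the top $p$-digit of $\langle\la,\ta^\vee\rangle$ can be partially absorbed, and I would isolate this by separating the contribution of the ``level $t-1$'' factor from the higher levels. The statement that equality forces $f=t(\mu)$ is then a bookkeeping observation: any extra level $f > t(\mu)$ introduces at least one more mandatory tensor factor of positive degree, strictly increasing the required $m$.

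For the case $p=2$ I would carry out the analogous but simpler computation using \eqref{Ur:spectral-even} and \eqref{U:indices-even}, where only symmetric powers appear; pairing with $\ta^\vee$ and using $m = \sum_n a_n$ together with $i = \sum_n a_n 2^{n-1}$ gives \eqref{Br:weight-even} directly, with the $s-t$ arising from the levels $t,\dots,s$ that must carry nonzero $a_n$ to realize the required $2$-adic digits. I would likely induct on $s$ or on the number of levels, peeling off the top Frobenius layer and invoking the case $s$ one smaller; alternatively a direct estimate on a single fixed summand suffices since $b(-)$ is a max over weights and every surviving weight already lives in some specific $E_1$-summand.

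The hard part will be making the ``carrying cost'' estimate precise: one must show that realizing $\langle\la,\ta^\vee\rangle$ (with its top digit at level $t-1$) inside a sum $\sum_{n\ge 1}(a_n 2^n + b_{n-1}2^{n-1})$ modulo $2^{s+f}$, while keeping $b(\ga)$ large, genuinely forces the stated number of nonzero indices at levels $\ge t$, and to do the optimization over all admissible index tuples that produces exactly the $\min$ of two expressions in \eqref{Br:weight-odd} rather than something weaker. I expect this to reduce, after pairing with $\ta^\vee$, to an elementary but slightly delicate lemma about writing an integer in ``mixed base $p$'' with a prescribed leading behavior, which I would state and prove separately before returning to the spectral sequence.
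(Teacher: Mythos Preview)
Your overall strategy---analyze the $E_1$-terms of the spectral sequence \eqref{Br:spectral}, pair with $\ta^\vee$, and extract the bound from a base-$p$ carrying argument across levels $t,\dots,s$---is exactly what the paper does. However, there is a sign error in your setup that, if not corrected, reverses the direction of the estimates and makes the argument collapse.

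The weights of $\mathfrak u^*$ are \emph{positive} roots, not negative roots (since $B$ is built from $\Phi^-$, the dual $\mathfrak u^*$ has $T$-weights in $\Phi^+$). Thus if $\xi_n$ is a weight of $S^{a_n}(\mathfrak u^*)$ and $\psi_n$ of $\Lambda^{b_n}(\mathfrak u^*)$, then $\langle\xi_n+\psi_n,\ta^\vee\rangle\ge 0$, and the relevant inequality is an \emph{upper} bound: for a long root $\beta$ one has $\langle\alpha,\beta^\vee\rangle=2$ if $\alpha=\beta$ and $\le 1$ otherwise, giving $\langle\xi_n+\psi_n,\ta^\vee\rangle\le 2a_n+b_n+1$ (with the ``$+1$'' because $\ta$ can occur at most once in an exterior factor). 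Your stated bounds ``at least $-b_n$'' and ``at least $-a_n$'' are pointing the wrong way. Also, after taking $T_{s+f}$-invariants the weight identity is an equality $p^{s+f}\gamma=\la+p^s\mu+\sum_n(\xi_n+\psi_n)p^n$, not merely a congruence mod $p^{s+f}X(T)$.

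Once these are fixed, the paper's argument proceeds by the carrying recursion you anticipate: one introduces nonnegative integers $k_0=0,k_1,\dots,k_s$ via $\la_n+\langle\xi_n+\psi_n,\ta^\vee\rangle=pk_{n+1}-k_n$, shows from $\la_{t-1}>0$ that $k_n\ge 1$ for all $n\ge t$, and converts this into a lower bound on $\sum_{n\le s-1}(2a_n+b_n+1)$. The optimization then sets $2a_n+b_n+1$ to its minimum for $n\le s-1$, sets the levels $s,\dots,s+f-1$ to the bare minimum, and pushes everything remaining into $2a_{s+f}$; comparing with $m=\sum(2a_n+b_n)$ yields the two expressions in \eqref{Br:weight-odd}. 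The distinction between the two comes from whether one uses the crude bound $2a_{t-1}+b_{t-1}+1\ge 1$ or the sharper $\ge (p-1)-\la_{t-1}$ at level $t-1$. The strictness when $f>t(\mu)$ falls out of the final estimate because the term $p^{t(\mu)-f}$ drops below $1$.
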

\begin{proof} 
(a) We make use of the spectral sequence of Section \ref{SS:spectral}. If $\ga$ is a weight of $\opH^m(B_{s+f}, 2^s\mu+\la)^{(-(s+f))}$, then 
$2^{(s+f)}\ga$ is a weight of an expression of the form 
\begin{equation}\label{Ur:weight-even}
( \la+2^s \mu) \otimes S^{a_1}(\mathfrak{u}^*)
\otimes S^{a_2}(\mathfrak{u}^*)^{(1)}
\otimes \;\cdots \; \otimes S^{a_{s+f}}(\mathfrak{u}^*)^{(s+f-1)}
\end{equation}
with  
$m = \sum_{n= 1}^{s+f} a_n.$ 

We can write $2^{s+f} \ga = \la + 2^s\mu+ \sum_{n=1}^{s+f} \xi_n 2^{n-1}$, where each $\xi_n$  is a weight of $S^{a_n}(\mathfrak{u}^*)$. 
For any long root $\beta$ and any positive root $\alpha$,
\begin{equation}\label{long-root}
\langle \alpha, \beta^{\vee}\rangle  
= 2 \text { if } \alpha=\beta \text{ while }
\langle \alpha, \beta^{\vee}\rangle \leq 
1 \text { if } \alpha\neq \beta.
\end{equation} 
This implies that, for all long roots $\beta$ and $1 \leq n \leq s + f$,
\begin{equation}\label{a-estimate-even}
\ceil{\langle \xi_n, \beta^{\vee}\rangle/2} \leq a_n. 
\end{equation} 
Note also that, since $\xi_n$ is a sum of positive roots, $\langle \xi_n, \ta^{\vee}\rangle \geq 0$.
For convenience, set $c_n := \ceil{\langle \xi_n, \ta^{\vee}\rangle/2}$. Then we have $2a_n \geq 2c_n \geq \langle \xi_n, \ta^{\vee}\rangle\geq 0$ for $1 \leq n \leq s + f$.

Since $\la$ is a dominant weight, for any long root $\beta$, we have $ \langle \la, \beta^{\vee}\rangle \leq \langle \la, \ta^{\vee}\rangle.$  It follows that 
\begin{equation}\label{ga-estimate-even}
2^{s+f} b(\ga) \leq \langle \la, \ta^{\vee}\rangle+ 2^s b(\mu)+\sum_{n=1}^{s+f}\langle \xi_n, \beta^{\vee}\rangle2^{n-1}\leq \langle \la, \ta^{\vee}\rangle+ 2^s b(\mu)+\sum_{n=1}^{s+f}a_n2^{n}.
\end{equation}
In order to find an upper bound on $b(\ga)$ we need to 
find an upper bound on $\sum_{n=1}^{s+f}a_n 2^{n}$ under the constraints
\begin{equation}\label{firstconstraint}
\sum_{n=1}^{s+f}a_n = m,  a_n \geq c_n \text{ for } 1 \leq n \leq s + f.
\end{equation}
However, the maximum value of $\sum_{n=1}^{s+f}a_n 2^{n}$ subject to 
(\ref{firstconstraint}) is less than or equal to its maximum value subject to the following (weaker) constraints
\begin{equation}\label{constraint}
\sum_{n=1}^{s+f}a_n= m,  \;
 a_n \geq  c_n \text{ for } 1 \leq n \leq s,\ a_n \geq 0 \text{ for } s+1 \leq n \leq s+f-1.
\end{equation}
In (\ref{constraint}) we are weakening the condition on some of the $a_n$s and not mentioning $a_{s+f}$ at all.  The sum $\sum_{n=1}^{s+f}a_n 2^{n}$ is evidently largest when $a_{s + f}$ is as large as possible.  This will correspond to choosing the other $a_n$s to be as small as possible.  

In other words, to maximize $\sum_{n=1}^{s+f}a_n 2^{n}$ subject to (\ref{constraint}) (and hence also (\ref{firstconstraint})), we may assume from now on that 
\begin{equation}\label{assumption-even}
a_n = c_n \text{ for } 1 \leq n \leq s  \text{ and } a_n = 0 \text{ for }s+1 \leq n \leq s + f - 1.
\end{equation}  
Observe that this implies
\begin{equation}\label{max-min-even}
\langle \xi_n, \ta^{\vee}\rangle \leq 2a_n \leq \langle \xi_n, \ta^{\vee}\rangle +1 \text{ for } 1 \leq n \leq s -1.
\end{equation}

We now investigate how large $a_{s+ f}$ can be.
For $t\leq n \leq s-1$, set $\la_n =0.$ 
Then we can write 
$$2^{s+f} \langle \ga, \ta^{\vee} \rangle =   \sum_{n=0}^{s-1}  (\la_n +\langle \xi_{n+1}, \ta^{\vee}\rangle) 2^n+2^s \langle \mu, \ta^{\vee} \rangle+ \sum_{n=s}^{s+f-1}\langle \xi_{n+1}, \ta^{\vee}\rangle2^{n}.$$ 
The inner products 
$\langle \xi_n, \ta^{\vee}\rangle$ are non-negative and at least $\la_{t-1}$ is positive. 
By divisibility considerations, this implies that
there exist non-negative integers $k_0, \dots, k_s$ with $k_0 =0$ such that 
\begin{equation}\label{k-intro-even}
 \langle \xi_{n+1}, \ta^{\vee}\rangle + \la_n= 2k_{n+1} - k_n, \text{ for } 0 \leq n \leq s-1.
\end{equation}
Then we have
\begin{align*}
\sum_{n=0}^{s-1}  (\la_n +\langle \xi_{n+1}, \ta^{\vee}\rangle) 2^n &= \sum_{n=0}^{s-1}(2k_{n+1}-k_n)2^n \\
& = 2^s k_s - 2k_0 \\
&= 2^s k_s.
\end{align*}

Note that, if $k_u = 0$ for some $u$, then $k_n =  \la_n =0$ for all $n <u$ and $ \langle \xi_{n}, \ta^{\vee}\rangle = 0$ for all $n \leq u$. 
On the other hand, $\la_{t-1}$ is positive. This forces  $k_n \geq 1$ for all $t \leq n \leq s$. Therefore there exists  a $u$ with $0 \leq u\leq t-1$ and $k_n =0$ for all $n \leq u$ and $k_n\geq 1$ for all $u+1 \leq n \leq s-1.$

For $n=u$ we obtain
$
\langle \xi_{u+1}, \ta^{\vee}\rangle = 2k_{u+1}-\la_t\geq 2k_{u+1}-1.
$
This forces 
\begin{equation}\label{t-even}
a_{u+1} = c_{u+1} = k_{u+1}.
\end{equation}
For $u+1\leq n \leq t-1$,  we conclude that 
$$
 \langle \xi_{n+1}, \ta^{\vee}\rangle = 2k_{n+1}-k_n-\la_n\geq 2k_{n+1}-k_n+(1-k_n)-\la_n\geq2(k_{n+1}-k_n),
$$
which implies 
\begin{equation}\label{n<s-even}
a_{n+1} = c_{n+1}  \geq (k_{n+1}-k_n).
\end{equation}
Finally for $t\leq n \leq s-1$, 
$$
\langle \xi_{n+1}, \ta^{\vee}\rangle = 2k_{n+1}-k_n-\la_n \geq 2k_{n+1}-k_n+(1-k_n)-\la_n =2(k_{n+1}-k_n)+1,
$$
which implies 
\begin{equation}\label{s<n-even}
a_{n+1} = c_{n+1} \geq (k_{n+1}-k_n)+1.
\end{equation}
Combining (\ref{t-even}), (\ref{n<s-even}) and (\ref{s<n-even}) yields
$\sum_{n=u+1}^{s}a_n  \geq   k_s+ s-t,$ which forces $a_{s+f} \leq m-s+t-k_s.$

Using the above inequality, (\ref{assumption-even}), and (\ref{max-min-even}), one obtains from
(\ref{ga-estimate-even}) via (\ref{k-intro-even}) and our assumptions on $a_n$
\begin{eqnarray*}
2^{s+f} b(\ga)  &\leq& \langle \la, \ta^{\vee}\rangle+ 2^s b(\mu) +\sum_{n=1}^{s+f}a_n2^n\\
&=& \sum_{n=0}^{s-1}\la_n2^n + \sum_{n=1}^{s}a_n2^n + 2^sb(\mu) + 
\sum_{n=s+1}^{s+f-1}a_n2^n + a_{s+f}2^{s + f}\\
&=& \sum_{n=0}^{s-1}(\la_n + 2a_{n+1})2^n + 2^s b(\mu) + a_{s+f}2^{s+f}\\
& \leq&  \sum_{n=0}^{s-1}( \la_n+\langle \xi_{n+1}, \ta^{\vee}\rangle+1)2^n+ 2^s(2^{t(\mu)}- 1) + (m-(s-t)-k_s) 2^{s+f}
\\
&=& \sum_{n=0}^{s-1}(\la_n + \langle \xi_{n+1},\ta^{\vee}\rangle)2^n + \sum_{n=0}^{s-1}2^n + (2^{s+{t(\mu)}} - 2^s) + (m-(s-t)-k_s)2^{s+f}\\
&=& k_s 2^s + (2^s - 1) + 2^{s+{t(\mu)}}-2^s+ (m-(s-t)-k_s) 2^{s+f}\\
&=&(m-(s-t))2^{s+f}-k_s2^{s+f}+k_s2^s+2^{s+{t(\mu)}}-1\\
&=& (m-(s-t))2^{s+f} + k_s(- 2^{s + f} + 2^s) + 2^{s + {t(\mu)}} - 1.
\end{eqnarray*}
Therefore,
\begin{equation}\label{final-even}
b(\ga) \leq (m-(s-t)) + k_s\left(-1 + 2^{- f}\right) + 2^{t(\mu)-f} - 2^{-(s + f)}.
\end{equation}
Since $k_s \geq 1$ and $f \geq t(\mu)$, one obtains 
$b(\ga) < m- (s-t)+1$ or $b(\ga) \leq m-(s-t),$ as claimed. If $f > t(\mu)$  we can conclude the stronger inequality $b(\ga) < m-(s-t).$


(b) Again we make use of the spectral sequence in the previous section. If $\ga$ is a weight of $\opH^m(B_{s+f},p^s\mu+\la)^{(-(s+f))}$ then 
$p^{s+f}\ga$ is a weight of an expression of the form 
\begin{equation}\label{Ur:weight-odd}
 (\la +p^s \mu)\; \otimes S^{a_0}(\mathfrak{u}^*)
\otimes S^{a_1}(\mathfrak{u}^*)^{(1)}
\otimes \; \cdots\; \otimes S^{a_{s+f}}(\mathfrak{u}^*)^{(s+f)}\otimes \Lambda^{b_0}(\mathfrak{u}^*) \otimes \Lambda^{b_1}(\mathfrak{u}^*)^{(1)}
\;\cdots \; \otimes \Lambda^{b_{s+f}}(\mathfrak{u}^*)^{(s+f)}
\end{equation}
with  $a_0=b_{s+f}=0$ and
$m = \sum_{n= 0}^{s+f}(2a_n +b_n).$ 

We can write $p^{s+f} \ga = \la + p^s \mu +\sum_{n=0}^{s+f} (\xi_n  + \psi_n) p^n$, where each $\xi_n$ and $\psi_n$ is a weight of $S^{a_n}(\mathfrak{u}^*)$ or $\Lambda^{b_n}(\mathfrak{u}^*)$, respectively. From 
(\ref{long-root}) it follows that, for all long roots $\beta$ and $0 \leq n \leq s + f$,
\begin{equation}\label{a-b-estimate}
\langle \xi_n + \psi_n, \beta^{\vee}\rangle \leq 2a_n + b_n + 1.
\end{equation}
We are using here that, as $\psi_n$ comes from an exterior power, $\ta$ can appear at most once in $\psi_n$.  Notice also that, if $\langle \xi_n +\psi_n, \beta^{\vee}\rangle \leq 1$, one could make the stronger claim that 
\begin{equation}\label{a-b-estimate-2}
\langle \xi_n + \psi_n,\beta^{\vee}\rangle \leq 2a_n + b_n.
\end{equation} 
It follows that 
\begin{equation}
2a_n + b_n + 1 \geq \max\{1, \langle \xi_n + \psi_n, \beta^{\vee}\rangle\} 
\end{equation} 
For convenience, set $c_n := \max\{1,\langle \xi_n + \psi_n,\ta^{\vee}\rangle\}$.  Then, we have $2a_n + b_n + 1 \geq c_n \geq 1$ for $1 \leq n\leq s+ f$.

As before we have $\langle \la, \beta^{\vee}\rangle \leq \langle \la, \ta^{\vee}\rangle$ for any long root $\beta$, which yields
\begin{equation}\label{ga-estimate-odd}
p^{s+f} b(\ga) \leq \langle \la, \ta^{\vee}\rangle+
p^s b(\mu)+ \sum_{n=0}^{s+f-1}(2a_n + b_n +1)p^n + 2a_{s+f} p^{s+f}.
\end{equation}
In order to find an upper bound for $b(\ga)$ we need an upper bound for $\sum_{n=0}^{s+f-1}(2a_n + b_n +1)p^n + 2a_{s+f} p^{s+f}$ under the constraints
\begin{equation}\label{constraints-odd} 
m+s+f = \sum_{n=0}^{s+f-1}(2a_n + b_n +1) + 2a_{s+f},\;
2a_n+ b_n +1 \geq c_n \text{ for } 0\leq n\leq s+f.
\end{equation} 
As in the $p = 2$ case, we attempt to maximize $a_{s + f}$ while minimizing the other $a_n$s. Thus it suffices to assume from now on that $2a_n + b_n + 1 = c_n$ for $0 \leq n \leq s-1$ while 
$2a_n + b_n + 1 = 1$ for $s \leq n \leq s + f - 1$.  Our goal is again to determine how large $2a_{s + f}$ can be.  Note that our assumption implies
\begin{equation}\label{max-min-odd}
\langle \xi_n + \psi_n, \ta^{\vee}\rangle \leq 
2a_n+ b_n +1\leq \langle \xi_n + \psi_n, \ta^{\vee}\rangle +1 \text{ for } 0 \leq n \leq s-1.
\end{equation}

For $t\leq n \leq s-1$ we set $\la_n =0.$ 
Then we can write 
$$p^{s+f} \langle \ga, \ta^{\vee} \rangle =   \sum_{n=0}^{s-1}  (\la_n +\langle \xi_n + \psi_n, \ta^{\vee}\rangle) p^n + p^s\langle \mu, \ta^{\vee} \rangle +
\sum_{n=s}^{s+f}  \langle \xi_n + \psi_n, \ta^{\vee}\rangle p^n.$$ The inner products 
$\langle \xi_n + \psi_n, \ta^{\vee}\rangle$ are non-negative and at least $\la_{t-1}$ is positive.  This implies that
there exist non-negative integers $k_0,\dots , k_s$ with $k_0 =0$ such that 
\begin{equation}\label{k-intro}
 \la_n+\langle \xi_n + \psi_n, \ta^{\vee}\rangle = k_{n+1}p - k_n, \text{ for } 0 \leq n \leq s-1.
\end{equation}
Then we have
$$
\sum_{n = 0}^{s - 1}(\la_n + \langle\xi_n + \psi_n,\ta^{\vee}\rangle) = k_s p^s.
$$

Note that, if there exists a $u$ with $k_u = 0$, then $k_n = \langle \xi_n + \psi_n, \ta^{\vee}\rangle=\la_n =0$ for all $n <u$. 
On the other hand, $\la_{t-1}$ is positive. This forces  $k_n \geq 1$ for all $n \geq t$. Therefore there exists  a $u$ with $0 \leq u\leq t-1$ and $k_n =0$ for all $n \leq u$ and $k_n\geq 1$ for all $n > u.$

For $u\leq n \leq t-1$ we obtain
$$
\langle \xi_n + \psi_n, \ta^{\vee}\rangle = k_{n+1}p - k_n - \la_n = (p-1)-\la_n -(k_n-1)+ (k_{n+1}-1)(p-1) +(k_{n+1} -1).$$ 
If $\la_n < p-1$ or $k_{n+1}>1$, then 
$$2a_n + b_n +1 \geq\langle \xi_n + \psi_n, \ta^{\vee}\rangle \geq 1 -(k_n-1)+ (k_{n+1}-1)(p-1).$$
If $\la_n = p-1$ and $k_{n+1}=1$, then $ \langle \xi_n + \psi_n, \ta^{\vee}\rangle = -(k_n-1)\leq 1$ and, by (\ref{a-b-estimate}),   
$$2a_n + b_n +1 \geq \langle \xi_n + \psi_n, \ta^{\vee}\rangle +1 = 1-(k_n-1).$$
 Either way one obtains, for  $u\leq n \leq t-1,$ that
\begin{equation}\label{t-s-estimate} 
2a_n + b_n +1   \geq 1 -(k_n-1)+ (k_{n+1}-1)(p-1) .
\end{equation}

For $t \leq n \leq s-1$ we have 
\begin{equation}\label{s-estimate} 
2a_n + b_n +1 \geq \langle \xi_n + \psi_n, \ta^{\vee}\rangle = k_{n+1}p-k_n=(p-1)-(k_n-1)  +(k_{n+1}-1)p.
\end{equation}

For $n \leq u-1$ and $s\leq n \leq s+f-1$, our assumptions give $2a_n + b_n +1 \geq 1$.
Combining this with (\ref{t-s-estimate}), (\ref{s-estimate}), and our assumptions above yields
\begin{align*}
&\sum_{n=0}^{s+f-1}(2a_n + b_n +1)\\
 &= \sum_{n=0}^{u-1}(2a_n + b_n +1) + \sum_{n=u}^{t-1}(2a_n + b_n +1) + \sum_{n=t}^{s-1}(2a_n + b_n +1) +  \sum_{n=s}^{s+f-1}(2a_n + b_n +1)\\
&\geq  \sum_{n = 0}^{u-1}1 + \sum_{n = u}^{t-1}(1 - (k_n - 1) + (k_{n+1} - 1)(p-1)) + 
\sum_{n = t}^{s - 1}((p-1) - (k_n - 1) + p(k_{n+1} - 1)) + \sum_{n = s}^{s + f - 1}1\\
&= u + (t - u) - \sum_{n=u}^{t-1}(k_n - 1) + (p-1)\sum_{n=u}^{t-1}(k_{n+1} - 1) + 
(s-t)(p-1) - \sum_{n=t}^{s-1}(k_n - 1) + p\sum_{n = t}^{s-1}(k_{n+1} - 1) + f\\
&= t + f + (s-t)(p-1) - \sum_{n=u}^{s-1}(k_n - 1) + (p-1)\sum_{n = u+1}^{t}(k_n - 1) + p\sum_{n = t + 1}^{s}(k_n - 1)\\
& = t + f + (s-t)(p-1) + 1 - \sum_{n = u+1}^{t}(k_n - 1) - \sum_{n = t + 1}^{s-1}(k_n - 1) + (p-1)\sum_{n=u+1}^{t}(k_n - 1)\\
&\qquad + p\sum_{n = t+1}^{s-1}(k_n - 1) + p(k_s - 1)\\
& =  1+t +f + (s-t)(p-1)+(p-2)\sum_{n = u+1}^t (k_n-1) +(p-1)\sum_{n=t+1}^{s}(k_n-1) + p(k_s-1).
\end{align*}
This expression is minimal if all $k_n=1$ for $u+1\leq n\leq s-1$.  
One obtains
\begin{equation}\label{sum-2}
\sum_{n=0}^{s+f-1}(2a_n + b_n +1) \geq k_s +t+f+ (s-t)(p-1)= k_s+ s +f+(s-t)(p-2).
\end{equation}
Hence, from (\ref{constraints-odd}), $2a_{s+f} \leq m+s+f -k_s -s-f -(s-t)(p-2)=m-k_s-(s-t)(p-2).$ 
Substituting the above and (\ref{max-min-odd}) into (\ref{ga-estimate-odd}) with $k_s =1$ yields
\begin{eqnarray*}
p^{s+f} b(\ga)  &\leq& \langle \la, \ta^{\vee}\rangle+p^s b(\mu)+ \sum_{n=0}^{s+f-1}(2a_n+b_n+1)p^n + 2a_{s+f}p^{s+f} \\
&=& \sum_{n=0}^{s-1}\la_np^n + \sum_{n=0}^{s-1}(2a_n + b_n + 1)p^n + p^s b(\mu)+ \sum_{n=s}^{s+f-1}(2a_n+b_n+1)p^n + 2a_{s+f}p^{s+f}\\
&\leq&  \sum_{n=0}^{s-1}( \la_n+\langle \xi_n+\psi_n, \ta^{\vee}\rangle+1)p^n+ p^s(p^{t(\mu)} - 1) + \sum_{n=s}^{s+f-1}p^n+2a_{s+f} p^{s+f}\\
&\leq& k_s p^s + \sum_{n=0}^{s-1}p^n + p^{s + {t(\mu)} } - p^s + \sum_{n=s}^{s + f - 1}p^n + (m - k_s - (s-t)(p-2))p^{s + f}\\
&\leq& k_s (p^s-p^{s+f}) + \sum_{n=0}^{s+f-1}p^n+p^{s+{t(\mu)} }-p^s+ (m-(s-t)(p-2)) p^{s+f}\\
&\leq&(m-(s-t)(p-2))p^{s+f}+\frac{p^{s+f}-1}{p-1}-p^{s+f}+p^{s+{t(\mu)} }.
\end{eqnarray*}
It follows that 
\begin{equation}
 b(\ga) <    m-(s-t)(p-2)+\frac{1}{p-1}-1+p^{{t(\mu)} -f}.
 \end{equation} 
Hence, 
\begin{equation}\label{last-estimate}
 b(\ga) \leq   m-(s-t)(p-2)
 \end{equation}
 with equality possible only if $f= t(\mu)$. 
 
If $\la_{t-1} < (p-2)$ we can use the estimate $2a_{t-1}+b_t + 1\geq (p-1) - (k_{t-1}-1)-\la_{t-1}$ instead of $2a_{t-1}+b_t + 1\geq 1- (k_{t-1}-1)$ and obtain $2a_{s+f} \leq k_s+s+(s-t+1)(p-2)+ \la_{t-1}$, which results in
\begin{equation}
b(\ga) \leq    m-(s-t+1)(p-2) +\la_{t-1},
\end{equation}
as claimed.  
\end{proof}


\subsection{}  Assume that $m \geq 0$, $s \geq 0$, and $f\geq 0$.
If we drop the conditions on the sizes of $\la$ and $\mu$ in Proposition \ref{Br:weight} and simply require $\la, \mu \in X(T)$, then the constraints on the non-negative integers $a_n$ and $b_n$, as defined in  (\ref{Ur:weight-even}) and (\ref{Ur:weight-odd}), are simply 
$\sum_{n=1}^{s+f} a_n=m$, if $p=2$, and $\sum_{n=0}^{s+f}(2a_n + b_n) =m$, if $p$ is odd. Obviously one obtains an upper bound for the weights of $\opH^m(B_{s+f}, p^r\mu+ \la)$  if one sets $a_{s+ f} =m$ and $2a_{s + f} =m$, respectively. 

\begin{proposition} \label{proposition:Br-remark}Assume that $m \geq 0$, $s\geq 0$, and $f\geq 0$. Let $\la, \mu \in X(T)$.
\begin{itemize}
\item[(a)] If $p=2$  and $\ga$ is a weight of $\opH^m(B_{s+f}, p^s\mu+\la)^{(-(s+f))},$ then 
\begin{equation}\label{Br:weight-even-rough}
2^{s + f}b(\ga) \leq 2^s b(\mu)+b(\la) +m\cdot 2^{s + f}.
\end{equation}
\item[(b)] If $p$ is odd  and $\ga$ is a weight of $\opH^m(B_{s+f}, p^s\mu+\la)^{(-(s+f))},$ then 
\begin{equation}\label{Br:weight-odd-rough}
p^{s + f}b(\ga) \leq p^s b(\mu)+b(\la) +m\cdot p^{s + f}.
\end{equation}
\end{itemize}
\end{proposition}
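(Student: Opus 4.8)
The plan is to reuse the spectral sequence of Section~\ref{SS:spectral} exactly as in the proof of Proposition~\ref{Br:weight}, but to throw away the delicate divisibility analysis entirely. Since no size hypothesis is placed on $\la$ or $\mu$, the only constraints on the exponents $a_n,b_n$ are $\sum_{n=1}^{s+f}a_n=m$ (for $p=2$) or $\sum_{n=0}^{s+f}(2a_n+b_n)=m$ with $a_0=b_{s+f}=0$ (for $p$ odd), and we simply estimate each term of the relevant $E_1$-weight using the trivial bound $p^n\le p^{s+f}$ for $0\le n\le s+f$. Concretely, if $\ga$ is a weight of $\opH^m(B_{s+f},p^s\mu+\la)^{(-(s+f))}$ then $p^{s+f}\ga$ is a weight of one of the displayed $E_1$-terms, so for odd $p$ one may write
\begin{equation*}
p^{s+f}\ga=\la+p^s\mu+\sum_{n=0}^{s+f}(\xi_n+\psi_n)p^n,
\end{equation*}
where $\xi_n$ is a weight of $S^{a_n}(\ul^*)$ and $\psi_n$ a weight of $\Lambda^{b_n}(\ul^*)$, and for $p=2$ one may write $2^{s+f}\ga=\la+2^s\mu+\sum_{n=1}^{s+f}\xi_n 2^{n-1}$ with $\xi_n$ a weight of $S^{a_n}(\ul^*)$.

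Now fix a long root $\be$ and pair with $\be^\vee$. Each $\xi_n$ is a sum of $a_n$ positive roots, so $\langle\xi_n,\be^\vee\rangle\le 2a_n$ by (\ref{long-root}); each $\psi_n$ is a sum of $b_n$ \emph{distinct} positive roots, so $\psi_n=0$ when $b_n=0$ and $\langle\psi_n,\be^\vee\rangle\le b_n+1$ when $b_n\ge1$ (the root $\be$ can occur at most once). For $p=2$ this already gives $\sum_n\langle\xi_n,\be^\vee\rangle 2^{n-1}\le\sum_n 2a_n 2^{n-1}=\sum_n a_n 2^n\le 2^{s+f}\sum_n a_n=m\cdot 2^{s+f}$; combining with $\langle\la,\be^\vee\rangle\le b(\la)$ and $\langle\mu,\be^\vee\rangle\le b(\mu)$, pairing the identity above with $\be^\vee$ and maximizing over long roots $\be$ yields (\ref{Br:weight-even-rough}). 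For odd $p$ one checks that $\langle\xi_n+\psi_n,\be^\vee\rangle p^n\le(2a_n+b_n)p^{s+f}$ for every $n$: at $n=s+f$ the constraint $b_{s+f}=0$ kills $\psi_{s+f}$ and leaves $\langle\xi_{s+f},\be^\vee\rangle\le 2a_{s+f}$; for $n\le s+f-1$ with $2a_n+b_n\ge 1$ one has $\langle\xi_n+\psi_n,\be^\vee\rangle\le 2a_n+b_n+1\le 2(2a_n+b_n)\le p(2a_n+b_n)$, whence $\langle\xi_n+\psi_n,\be^\vee\rangle p^n\le(2a_n+b_n)p^{n+1}\le(2a_n+b_n)p^{s+f}$; and indices with $2a_n+b_n=0$ contribute nothing. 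Summing over $n$ gives $\sum_n\langle\xi_n+\psi_n,\be^\vee\rangle p^n\le m\cdot p^{s+f}$, so $p^{s+f}\langle\ga,\be^\vee\rangle\le b(\la)+p^s b(\mu)+m\cdot p^{s+f}$, and maximizing over long roots $\be$ produces (\ref{Br:weight-odd-rough}).

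The argument is entirely mechanical once the $E_1$-decomposition is in hand, so there is no substantial obstacle; the one point that is not completely automatic is the bookkeeping for the exterior factors $\Lambda^{b_n}(\ul^*)$ in odd characteristic, where a weight can exceed its ``degree cost'' $b_n$ by one. The fix is exactly the two observations used above: the top exterior factor is forced to be trivial because $b_{s+f}=0$, and at any lower index $n$ the surplus $+1$ is swallowed by the factor $p\ge 2$ one loses in passing from $p^n$ to $p^{s+f}$. (This mismatch is also why the conclusion is phrased as an inequality for $p^{s+f}b(\ga)$: the contributions of $\la$ and $\mu$ are not divisible by $p^{s+f}$.)
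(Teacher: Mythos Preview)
Your argument is correct and follows essentially the same approach as the paper: both use the $E_1$-term description from Section~\ref{SS:spectral} and the elementary observation that, subject only to $\sum a_n = m$ (respectively $\sum(2a_n+b_n)=m$), the weight contribution is maximized by concentrating everything at the top index $n=s+f$. The paper states this in one sentence (``obviously one obtains an upper bound \ldots\ if one sets $a_{s+f}=m$ and $2a_{s+f}=m$, respectively''), whereas you spell out the odd-$p$ bookkeeping for the exterior factors $\Lambda^{b_n}(\ul^*)$ more explicitly; your handling of the surplus $+1$ via $2a_n+b_n+1\le 2(2a_n+b_n)\le p(2a_n+b_n)$ for $n\le s+f-1$ is a clean way to make that sentence rigorous.
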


\subsection{$B_s$-vanishing}   We can now state and prove sufficient conditions for the vanishing of the cohomology group $\operatorname{H}^{m}(B_{s},\lambda)$ 
in terms of an interrelationship between $s$, $m$, and $\lambda$. 

\begin{proposition}\label{Br:vanishing}  Assume that $m \geq 0$ and $s \geq 0$. Let $\la \in X(T)$ with $\langle \la, \ta^{\vee}\rangle > 0.$ 
\begin{itemize}
\item[(a)] If $p=2$ and $s\geq m+ \ceil{\log_2(\langle \la, \tilde{\alpha}^{\vee}\rangle+1) }$, then $\opH^m(B_s,  \la) =0$.
\item[(b)] If $p$ is odd and $s \geq  m/(p-2) + \ceil{\log_p(\langle \la, \tilde{\alpha}^{\vee}\rangle+1)}$, then $\opH^m(B_s, \la) =0$.
\item[(c)] 
Set $t = \ceil{\log_p(\langle \la, \tilde{\alpha}^{\vee}\rangle+1)}$ and  define the $t$-tuple $\{\la_0, ... , \la_{t-1} \}$ via $\langle \la, \ta^{\vee}\rangle = \sum_{n=0}^{t-1}\la_n p^n$ and $0 \leq \la_n \leq p-1$. If $p$ is odd and $s\geq  m/(p-2) + \ceil{\log_p(\langle \la, \tilde{\alpha}^{\vee}\rangle+1)} +\left( \frac{\la_{t-1}}{p-2}-1 \right)$, then $\opH^m(B_s, \la) =0$.
\end{itemize}
\end{proposition}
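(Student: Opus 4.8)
The plan is to read the vanishing statement off the weight estimates of Proposition~\ref{Br:weight} by observing that $\opH^m(B_s,\la)=0$ can be detected through the single coroot $\ta^\vee$. Suppose $\opH^m(B_s,\la)\neq 0$. Taking $T_s$-invariants in the spectral sequence of Section~\ref{SS:spectral}, some $(E_1^{i,j})^{T_s}$ with $i+j=m$ is nonzero, so one of the summands $\la\otimes S^{a_0}(\mathfrak{u}^*)\otimes\cdots$ (with $\sum_n(2a_n+b_n)=m$, $a_0=b_s=0$, or the $p=2$ analogue) carries a weight lying in $p^sX(T)$; write it as $p^s\ga=\la+\sum_n(\xi_n+\psi_n)p^n$ with $\ga\in X(T)$. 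Pairing with $\ta^\vee$ and using that each $\xi_n,\psi_n$ is a sum of positive roots (as $\mathfrak{u}^*$ carries the positive roots), hence $\langle \xi_n+\psi_n,\ta^\vee\rangle\geq 0$, gives $p^s\langle\ga,\ta^\vee\rangle\geq\langle\la,\ta^\vee\rangle>0$, so $\langle\ga,\ta^\vee\rangle\geq 1$.

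When $\la$ is dominant this is exactly Proposition~\ref{Br:weight} with $\mu=0$ and $f=0$: since $b(0)=0$ we have $t(\mu)=0$, so $f\geq t(\mu)$, the term $b(\mu)$ contributes nothing, and one checks from the stated lower bounds on $s$ (using $m\geq 0$, and $\la_{t-1}\geq 1$ in case (c)) that $s\geq t$. Thus any weight $\ga$ of $\opH^m(B_s,\la)^{(-s)}$ satisfies $b(\ga)\leq m-(s-t)$ for $p=2$, and $b(\ga)\leq\min\{m-(s-t+1)(p-2)+\la_{t-1},\,m-(s-t)(p-2)\}$ for odd $p$. Combining with $b(\ga)\geq\langle\ga,\ta^\vee\rangle\geq 1$ and solving the resulting inequality for $s$: a nonzero group would force $s<m+t$, which contradicts the hypothesis of (a); the second min-term forces $s<m/(p-2)+t$, contradicting (b); and the first min-term forces $s<m/(p-2)+t+(\tfrac{\la_{t-1}}{p-2}-1)$, contradicting (c).

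For general $\la\in X(T)$ with $\langle\la,\ta^\vee\rangle>0$ the same conclusion holds, because dominance of $\la$ is used in the proof of Proposition~\ref{Br:weight} only to replace $b(\ga)$ by an estimate phrased in terms of $\ta^\vee$ (namely $\langle\la,\be^\vee\rangle\leq\langle\la,\ta^\vee\rangle$ for long roots $\be$). Since here we need only control $\langle\ga,\ta^\vee\rangle$, we rerun the identical combinatorial argument with $b(\ga)$ replaced throughout by $\langle\ga,\ta^\vee\rangle$, with $\mu=0$, and with $\{\la_0,\dots,\la_{t-1}\}$ the base-$p$ digits of $\langle\la,\ta^\vee\rangle$ exactly as in the statement (so $\la_{t-1}>0$, which is what drives the ``$k_n$'' bookkeeping in that proof, and $s\geq t$ as checked above). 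This yields $\langle\ga,\ta^\vee\rangle\leq m-(s-t)$ for $p=2$ and $\langle\ga,\ta^\vee\rangle\leq\min\{m-(s-t+1)(p-2)+\la_{t-1},\,m-(s-t)(p-2)\}$ for odd $p$, and comparison with $\langle\ga,\ta^\vee\rangle\geq 1$ again contradicts the hypotheses of (a), (b), (c) respectively.

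The substantive content is entirely contained in Proposition~\ref{Br:weight}; the only care required is the bookkeeping confirming that its proof never genuinely used dominance of $\la$ beyond the $\ta^\vee$-estimate, together with the routine verifications that the lower bounds on $s$ imply $s\geq t$ and that the extracted cohomology weight satisfies $\langle\ga,\ta^\vee\rangle\geq 1$. I expect the main obstacle to be making the ``rerun with $\langle\ga,\ta^\vee\rangle$ in place of $b(\ga)$'' step rigorous without simply reproducing the estimates of Proposition~\ref{Br:weight}; the cleanest remedy is probably to isolate the purely arithmetic lemma about the admissible tuples $(a_n,b_n)$ that underlies both results and invoke it here with $\mu=0$.
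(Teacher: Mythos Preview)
Your proposal is correct and follows essentially the same route as the paper: assume nonvanishing, pick a weight $p^s\ga$, observe $\langle\ga,\ta^\vee\rangle\geq 1$, invoke the proof of Proposition~\ref{Br:weight} with $\mu=0$, $f=0$ restricted to the coroot $\ta^\vee$ (noting dominance of $\la$ is irrelevant there), and solve the resulting inequalities for $s$ to reach a contradiction. The paper handles the non-dominant case with the same one-line remark you make, so your concern in the final paragraph about rigorously ``rerunning'' the estimate is not a genuine obstacle.
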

\begin{proof} 
We prove the contrapositive. Assume that  $\opH^m(B_s,  \la) \neq 0$ and let $p^{s}\ga$ be a weight of $\opH^m(B_s,  \la).$ It follows from 
(\ref{Ur:weight-even}) and (\ref{Ur:weight-odd}) (with $\mu =0$) that $\langle \ga, \ta^{\vee} \rangle \geq \langle \la, \ta^{\vee} \rangle >0.$ 
Setting  $t=\ceil{\log_p(\langle \la, \tilde{\alpha}^{\vee}\rangle+1)}$  is equivalent to saying $p^{t-1} \leq \langle \la, \ta^{\vee} \rangle < p^t.$ We appeal to the proof of Proposition \ref{Br:weight} with $\mu =0$, 
$f=0$ and $\beta = \ta$, and obtain the following inequalities:
$$1 \leq \langle \ga, \ta^{\vee} \rangle \leq m-(s-t) \text{ if } p=2,
\text{ and }
1 \leq \langle \ga, \ta^{\vee} \rangle \leq m-(s-t)(p-2) \text{ if } p>2.$$
It follows that  $0<m-s+t$ for $p=2$, and $0 < m-(s-t)(p-2)$ for odd $p$.
Solving for $s$ yields
$s < m+t$  and $s < m/(p-2)+t$, respectively.  This verifies assertions (a) and (b). 

For part (c) we use   the inequality  $1 \leq \langle \ga, \ta^{\vee} \rangle \leq m-(s-t+1)(p-2) + \la_{t-1}.$ Again solving for $s$ yields $s < m/(p-2) + t +\left(\frac{\la_{t-1}}{p-2}-1\right)$ as desired.

Note that it is not necessary for $\la$ to be dominant, because we are only using inner products with the longest root $\ta$.
\end{proof}


\section{$B$- and $G$-cohomology}

\subsection{Vanishing of $B$-cohomology}The computation of the $B$-cohomology groups $\text{H}^{n}(B,\sigma)$, $\sigma\in X(T)$, is still an outstanding open problem. For $n=0,1,2$ these groups 
have been computed in \cite{A}, \cite{BNP7}, \cite{W}, and for $n=3$ and $p>h$ in \cite{AR}. The results in the preceding section will now be used to provide conditions to 
ensure that these $B$-cohomology groups vanish. 
 
\begin{proposition}\label{B:vanishing}  Assume that $m \geq 0$ and $s \geq 0$. Let $\la, \mu \in X(T)$ with $\langle \la, \ta^{\vee}\rangle > 0.$ 
\begin{itemize}
\item[(a)] If $p=2$ and $s\geq m+ \ceil{\log_2(\langle \la, \tilde{\alpha}^{\vee}\rangle+1) }$, then $\opH^m(B, \la - 2^s \mu) =0$.
\item[(b)] If $p$ is odd and $s \geq m/(p-2)+ \ceil{\log_p(\langle \la, \tilde{\alpha}^{\vee}\rangle+1)}$, then $\opH^m(B, \la - p^s \mu) =0$.
\end{itemize}
\end{proposition}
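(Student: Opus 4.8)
The plan is to deduce the vanishing of the $B$-cohomology group $\opH^m(B,\la-p^s\mu)$ from the $B_s$-vanishing established in Proposition \ref{Br:vanishing} by means of the Lyndon--Hochschild--Serre spectral sequence for the normal subgroup $B_s \trianglelefteq B$. Recall that $B/B_s \cong B^{(s)}$, so there is a spectral sequence
\begin{equation*}
E_2^{i,j} = \opH^i\bigl(B^{(s)}, \opH^j(B_s, \la-p^s\mu)\bigr) \Rightarrow \opH^{i+j}(B, \la-p^s\mu).
\end{equation*}
First I would rewrite the coefficient module: since $\mu^{(s)}$ is trivial as a $B_s$-module (it is a Frobenius twist), one has $\opH^j(B_s,\la-p^s\mu) \cong \opH^j(B_s,\la)\otimes (-\mu)^{(s)}$ as $B^{(s)}$-modules, where the twist accounts for the $B/B_s$-action. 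Thus $E_2^{i,j} \cong \opH^i\bigl(B^{(s)}, \opH^j(B_s,\la)^{(-s)} \otimes (-\mu)\bigr)^{(s)}$ after untwisting, with all $\opH^j(B_s,\la)^{(-s)}$ being genuine rational $B$-modules.

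Next, the hypothesis on $s$ is exactly the hypothesis of Proposition \ref{Br:vanishing}(a) (resp. (b)) \emph{with $m$ replaced by $j$}, for every $j \le m$; indeed for $j \le m$ we have $s \ge m + t \ge j + t$ in the $p=2$ case and $s \ge m/(p-2) + t \ge j/(p-2) + t$ in the odd case. Since $\langle \la,\ta^\vee\rangle > 0$ by assumption, Proposition \ref{Br:vanishing} gives $\opH^j(B_s,\la) = 0$ for all $0 \le j \le m$. Consequently every entry $E_2^{i,j}$ on the total-degree-$m$ antidiagonal (i.e. with $i+j=m$, hence $j \le m$) vanishes, and therefore $\opH^m(B,\la-p^s\mu) = 0$. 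This handles both (a) and (b) simultaneously, the only difference being which clause of Proposition \ref{Br:vanishing} is invoked.

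The one point requiring genuine care — the main obstacle — is the bookkeeping of the Frobenius twist in the LHS spectral sequence: one must be sure that the action of $B/B_s \cong B^{(s)}$ on $\opH^j(B_s,\la-p^s\mu)$ is correctly identified, so that after factoring out $(-\mu)^{(s)}$ the remaining $B_s$-cohomology is literally $\opH^j(B_s,\la)$ and Proposition \ref{Br:vanishing} applies verbatim. This is the standard "untwisting" argument (cf. \cite[I.6.9, I.9.x]{Jan}), but it should be spelled out since the coefficient weight $\la - p^s\mu$ mixes a small part $\la$ with a $p^s$-divisible part. Everything else — reducing to the antidiagonal of the $E_2$-page and invoking the previous proposition for each $j \le m$ — is routine.
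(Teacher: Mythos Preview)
Your proposal is correct and follows essentially the same route as the paper: apply the LHS spectral sequence for $B_s \trianglelefteq B$, factor out the $p^s$-divisible part $-p^s\mu$ from the $B_s$-coefficients, and then invoke Proposition~\ref{Br:vanishing} to kill $\opH^j(B_s,\la)$ for every $j\le m$ on the antidiagonal. The paper's proof is terser (it writes the $E_2$-term directly as $\opH^i(B/B_s,\opH^j(B_s,\la)\otimes -p^s\mu)$ without spelling out the untwisting), but the argument is the same.
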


\begin{proof} 
We apply the Lyndon-Hochschild-Serre (LHS) spectral sequence  
$$E^{i,j}_2= \opH^i(B/B_s, \opH^j(B_{s}, \la) \otimes -p^s\mu) \Rightarrow
\opH^{i+j}(B,\la-p^s\mu).
$$ 
Let $m = i+j$. By Proposition \ref{Br:vanishing}
all the cohomology groups $\opH^j(B_{s}, \la)$ vanish. Hence  $E^{i,j}_2=0$ for all $i+j=m$, which verifies the assertion. 
\end{proof}

\subsection{$G$-cohomology} \label{section:G-cohovanishing}  We now use Proposition \ref{B:vanishing} to obtain conditions on the vanishing of certain $G$-cohomology groups. 

\begin{theorem}\label{G:Vanishing}  Suppose that $m \geq 0$ and $s \geq 0$. Let $M$ be a finite dimensional $G$-module and $\la \in X(T)_+$ with $ \la \neq 0.$ 
\begin{itemize}
\item[(a)] If $p=2$ and $s \geq m+ \ceil{\log_2(\langle \la, \tilde{\alpha}^{\vee}\rangle+1)} $, then $\Ext_G^m(M^{(s)},H^0( \la  ) )=0.$
\item[(b)]  If $p$ is odd and $s \geq m/(p-2)+ \ceil{\log_p(\langle \la, \tilde{\alpha}^{\vee}\rangle+1)}$, then $\Ext_G^m(M^{(s)},H^0( \la  ) )=0.$
\end{itemize}
\end{theorem}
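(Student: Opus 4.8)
The plan is to reduce the computation of $\Ext_G^m(M^{(s)},H^0(\la))$ to $B$-cohomology of a one-dimensional weight of the form $\la - p^s\mu$ (up to a twist), and then invoke Proposition \ref{B:vanishing}. First I would use the tensor identity and Frobenius reciprocity: since $H^0(\la) = \ind_B^G \la$, we have
\[
\Ext_G^m(M^{(s)}, H^0(\la)) \cong \Ext_G^m(M^{(s)}, \ind_B^G \la) \cong \Ext_B^m(M^{(s)}, \la) \cong \opH^m(B, \la \otimes (M^*)^{(s)}),
\]
using exactness of $\ind_B^G$ (Kempf vanishing is not even needed here, only that $\ind_B^G$ is right adjoint and exact on the relevant objects, or one can appeal directly to \cite[I.4.5--I.4.8, II.4]{Jan}). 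Here $M^*$ denotes the contragredient, which is again a finite-dimensional $G$-module.

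Next, I would decompose $(M^*)^{(s)}$ by its $T$-weights. As a $B$-module, $(M^*)^{(s)}$ has a filtration whose sections are one-dimensional $B$-modules $p^s\nu$ for $\nu$ running over the (finitely many) weights of $M^*$; indeed since $U_s$ acts trivially on a Frobenius twist, the $U$-action factors appropriately and one gets such a $B$-filtration (or, more cheaply, one filters $M^*$ as a $B$-module by weight spaces and twists). Using the long exact sequence in $\opH^\bullet(B, \la \otimes -)$ associated to this filtration, it suffices to show $\opH^m(B, \la + p^s\nu) = 0$ for every weight $\nu$ of $M^*$. Writing $\mu := -\nu$, this is exactly $\opH^m(B, \la - p^s\mu) = 0$, which follows from Proposition \ref{B:vanishing}(a) or (b) under the stated hypothesis on $s$ — note the hypothesis involves only $\langle\la,\ta^\vee\rangle$ and $m$, and in particular is independent of $\mu$, so the same bound on $s$ works uniformly across all weights $\nu$ of $M^*$.

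The main technical point to get right is the passage from $\Ext_G$ to $\opH^\bullet(B,-)$: one must check that $R^j\ind_B^G \la = 0$ for $j>0$ (i.e.\ $\opH^j(G/B, \mathcal{L}(\la)) = 0$ for $j>0$), which holds since $\la$ is dominant by Kempf vanishing \cite[II.4.5]{Jan}, so that the Grothendieck spectral sequence for $\Hom_G(-, \ind_B^G(-))$ collapses and gives the stated isomorphism $\Ext_G^m(M^{(s)}, H^0(\la)) \cong \Ext_B^m(M^{(s)}, \la)$ directly. After that the argument is formal: a finite filtration and the associated long exact sequences reduce everything to the one-dimensional vanishing statement already in hand. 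I expect no serious obstacle beyond bookkeeping; the substance of the theorem is entirely contained in Proposition \ref{B:vanishing} (and hence, ultimately, in the weight estimates of Proposition \ref{Br:weight}).
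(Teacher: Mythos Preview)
Your proposal is correct and follows essentially the same route as the paper: reduce $\Ext_G^m(M^{(s)},H^0(\la))$ to $\Ext_B^m(M^{(s)},\la)$ via Kempf vanishing/Frobenius reciprocity, filter by one-dimensional $B$-modules to extract a weight $\mu$ of $M$ with $\opH^m(B,\la - p^s\mu)\neq 0$, and then apply Proposition~\ref{B:vanishing}. The paper's write-up is terser (it simply asserts the existence of such a $\mu$ without spelling out the filtration argument), but the substance is identical.
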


\begin{proof} Assume that $\Ext_G^m(M^{(s)},H^0( \la  )  )\neq 0$, and note that $\Ext_G^m(M^{(s)},H^0( \la  ) )\cong \Ext_B^m(M^{(s)}, \la  ).$ There has to be a weight $\mu$ of $M$ such that  
$ \Ext_B^m(p^s\mu, \la  )\cong \opH^m(B, \la -p^s\mu )\neq 0.$ Any non-zero dominant weight $\la$ satisfies $\langle \la, \ta^{\vee}\rangle > 0.$ Parts (a) and (b) now follow immediately from Proposition~\ref{B:vanishing}.
\end{proof}

We observe that in general Theorem~\ref{G:Vanishing} is false when $\la = 0$. In this case Cline, Parshall and Scott \cite{CPS} have shown that there are natural inclusions obtained from the Frobenius morphism 
\begin{equation}\label{inclusion-generic}
\Ext^m_G(M, k)\into \Ext^m_G(M^{(1)}, k) \into \Ext^m_G(M^{(2)}, k) \into \cdots \; .
\end{equation}
These eventually stabilize to the so-called generic cohomology \cite{CPSvdK}.

\begin{example}
The vanishing results observed in Theorem~\ref{G:Vanishing} do not hold in general if one replaces $H^0(\la)$ by a simple module $L(\la)$. 
Let $G= SL_2$ with $p$ odd (and recall the notation for weights given in Section \ref{SS:notation}). One has $\Ext_G^2(L(2)^{(s)}, k) \cong k$ for all $s>0$. The short exact sequence
$$0 \to L(2p-2) \to H^0(2p-2) \to k \to 0$$ 
yields a long exact sequence in cohomology, part of which is $$ \Ext_G^2( L(2)^{(s)}, H^0(2p-2)) \to 
\Ext_G^2( L(2)^{(s)}, k) \to \Ext_G^3( L(2)^{(s)}, L(2p-2)) \to 
\Ext_G^3( L(2)^{(s)}, H^0(2p-2)).$$ For large $s$ it follows from  Theorem~\ref{G:Vanishing} that 
$$\Ext_G^3( L(2)^{(s)}, L(2p-2)) \cong \Ext_G^2( L(2)^{(s)}, k) \cong k \neq 0.$$
\end{example}


\section{$G_s$-cohomology}


\subsection{} 
Next we provide a constraint on the size of the highest weights of the irreducible $G$-modules that can appear as composition factors of certain $G_s$-cohomology groups.  
Proposition \ref{Gr:weight} is a main ingredient in the proof of the key $G$-cohomological vanishing result (Proposition \ref{vanishing}).

\begin{proposition}\label{Gr:weight}  Let $\la\in X(T)_+$ with $\la \neq 0$ and 
$M$ be a finite dimensional $G$-module. Set $t = t(\la)$ and assume that $m \geq 0$, $f\geq t(M)$, and $s \geq t.$ We define the $t$-tuple $\{\la_0, ... , \la_{t-1} \}$ via 
$\langle \la, \ta^{\vee}\rangle = \sum_{n=0}^{t-1}\la_n p^n$ and $0 \leq \la_n \leq p-1$.
\begin{itemize}
\item[(a)] If $p=2$ and $L(\ga)$ is a $G$-composition factor of $\opH^m(G_{s+f}, M^{(s)}\otimes H^0(\la))^{(-(s+f))}$, then 
\begin{equation}\label{Gr:weight-even}
\langle \ga, \ta^{\vee}\rangle \leq m - (s-t).
\end{equation}

Equality can hold only if $f=t(M).$
\item[(b)] If $p$ is odd and $L(\ga)$ is a $G$-composition factor of $\opH^m(G_{s+f}, M^{(s)}\otimes H^0(\la))^{(-(s+f))}$, then
\begin{equation}\label{Gr:weight-odd}
\langle \ga, \ta^{\vee}\rangle \leq \min\{m - (s-t+1)(p-2) + \la_{t-1},\ m-(s-t)(p-2)\}.
\end{equation}
Equality can hold only if $f=t(M).$
\end{itemize}
\end{proposition}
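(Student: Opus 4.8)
The plan is to reduce Proposition \ref{Gr:weight} to the already-established $B_s$-version, Proposition \ref{Br:weight}, by passing from $G_{s+f}$-cohomology down to $B_{s+f}$-cohomology. First I would fix a $G$-composition factor $L(\ga)$ of $\opH^m(G_{s+f}, M^{(s)}\otimes H^0(\la))^{(-(s+f))}$. Since $L(\ga)$ is a composition factor, its highest weight $p^{s+f}\ga$ occurs among the $B$-weights of $\opH^m(G_{s+f}, M^{(s)}\otimes H^0(\la))$; by restriction from $G_{s+f}$ to $B_{s+f}$, this weight $p^{s+f}\ga$ is then a weight of $\opH^m(B_{s+f}, M^{(s)}\otimes H^0(\la))$. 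The next step is to replace $M^{(s)}\otimes H^0(\la)$, as a $B_{s+f}$-module, by the one-dimensional module $p^s\nu + \xi$ for a suitable pair of weights: because $M^{(s)}$ has weights of the form $p^s\nu$ for $\nu$ a weight of $M$, and $H^0(\la)$ has a $B$-filtration (or at least a $T$-weight decomposition) whose weights $\xi$ satisfy $\xi \leq \la$ with $\xi$ dominated by $\la$, any $B_{s+f}$-weight of $\opH^m(B_{s+f}, M^{(s)}\otimes H^0(\la))$ arises as a $B_{s+f}$-weight of $\opH^m(B_{s+f}, p^s\nu + \xi)$ for some such $\nu, \xi$. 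One must be slightly careful here: $H^0(\la)$ is not a direct sum of one-dimensionals, so I would instead run the spectral sequence of Section \ref{SS:spectral} directly with $M = M^{(s)}\otimes H^0(\la)$ and observe that the highest-weight contribution is controlled exactly as in the proof of Proposition \ref{Br:weight}, with $\langle \la, \ta^\vee\rangle$ playing the role of the leading term (since $\la$ is the highest weight of $H^0(\la)$ and is dominant) and $b(M) = t(M)$-sized data playing the role of $\mu$.

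Concretely, I would re-examine the estimate \eqref{ga-estimate-even} (resp. \eqref{ga-estimate-odd}) from the proof of Proposition \ref{Br:weight}: there the key inputs were (i) $\la$ dominant so $\langle \la, \beta^\vee\rangle \leq \langle \la, \ta^\vee\rangle$ for all long roots $\beta$, and (ii) the $\mu$-contribution is bounded by $b(\mu) \leq p^{t(\mu)} - 1$. In the present setting the coefficient module tensored on is $M^{(s)}\otimes H^0(\la)$; its weights are $p^s\nu + \xi$ with $\nu$ a weight of $M$ and $\xi$ a weight of $H^0(\la)$. Since $\xi \leq \la$ and $\la$ is dominant, $\langle \xi, \ta^\vee\rangle \leq \langle \la, \ta^\vee\rangle$, and moreover the "carrying" argument via the integers $k_n$ only used that the leading $p^n$-coefficients of the relevant weight match those of $\langle \la, \ta^\vee\rangle = \sum \la_n p^n$. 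The $p^s\nu$ part contributes a term bounded by $p^s b(M) \leq p^s(p^{t(M)} - 1)$, exactly the role of $2^s b(\mu)$ (resp. $p^s b(\mu)$) with $t(\mu)$ replaced by $t(M)$, and $f \geq t(M)$ ensures the same final inequality \eqref{final-even} (resp. \eqref{last-estimate}) goes through verbatim, including the sharpness clause that equality forces $f = t(M)$.

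Therefore the heart of the matter is purely bookkeeping: I would state that applying the spectral sequence \eqref{Br:spectral} to $\opH^m(B_{s+f}, M^{(s)}\otimes H^0(\la))$ and tracking the highest-weight term reduces, line for line, to the computation carried out in the proof of Proposition \ref{Br:weight} with $\mu$ replaced by an arbitrary weight of $M$ (bounded in size by $b(M)$, hence $t(M)$), and with $\la$ still the dominant highest weight of $H^0(\la)$. One then concludes $b(\ga) \leq m - (s-t)$ for $p=2$ and $b(\ga) \leq \min\{m-(s-t+1)(p-2)+\la_{t-1}, m-(s-t)(p-2)\}$ for $p$ odd, which for a $G$-module is precisely $\langle \ga, \ta^\vee\rangle$ since $\ga$ is dominant. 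The main obstacle I anticipate is the passage from $G_{s+f}$-cohomology to $B_{s+f}$-cohomology at the level of weights of composition factors: one must justify that the highest weight $p^{s+f}\ga$ of a $G$-composition factor $L(\ga)$ genuinely appears as a $T$-weight of the underlying $B_{s+f}$-cohomology space (this follows from restriction $\opH^m(G_{s+f}, -) \to \opH^m(B_{s+f}, -)$ being the natural restriction map, under which every $T$-weight of the source is a $T$-weight of the target, combined with the fact that the highest weight of a $G$-composition factor is a weight of the module), and one must handle the fact that $H^0(\la)$ is not semisimple as a $B$-module by working inside the spectral sequence rather than decomposing the coefficients. Once that reduction is in place, the arithmetic is identical to Proposition \ref{Br:weight} and requires no new ideas.
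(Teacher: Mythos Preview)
Your reduction from $G_{s+f}$ to $B_{s+f}$ via restriction is the gap. The claim that ``every $T$-weight of the source is a $T$-weight of the target'' under the restriction map $\opH^m(G_{s+f},-)\to\opH^m(B_{s+f},-)$ is unjustified: this would follow from injectivity of restriction, which you have not established (and which is false in general for Frobenius kernels). So even though $p^{s+f}\ga$ is certainly a $T$-weight of $\opH^m(G_{s+f},M^{(s)}\otimes H^0(\la))$, you cannot conclude it survives in the $B_{s+f}$-cohomology. Your second route, running the spectral sequence of Section~\ref{SS:spectral} directly on $M^{(s)}\otimes H^0(\la)$, still only computes $B_{s+f}$-cohomology, not $G_{s+f}$-cohomology, so the same passage problem remains.

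The paper avoids this entirely. Instead of restricting, it uses the spectral sequence \cite[II.12.2]{Jan}
\[
E_2^{i,j}=R^i\ind_B^G\bigl(\opH^j(B_{s+f},M^{(s)}\otimes\la)^{(-(s+f))}\bigr)\ \Rightarrow\ \opH^{i+j}(G_{s+f},M^{(s)}\otimes H^0(\la))^{(-(s+f))},
\]
which relates $G_{s+f}$-cohomology of $H^0(\la)$ to $B_{s+f}$-cohomology of the \emph{one-dimensional} module $\la$. A composition factor $L(\ga)$ of the abutment then comes from some $R^i\ind_B^G(\de)$ with $\de$ a weight of $\opH^j(B_{s+f},M^{(s)}\otimes\la)^{(-(s+f))}$ and $i+j=m$. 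Strong Linkage gives $\ga\uparrow w\cdot\de$ for some $w\in W$, whence $\langle\ga,\ta^\vee\rangle\le\langle\de,w^{-1}(\ta)^\vee\rangle$. Since $w^{-1}(\ta)$ is only some long root, one needs the bound on $b(\de)=\max_\beta\langle\de,\beta^\vee\rangle$ over \emph{all} long roots $\beta$; this is exactly what Proposition~\ref{Br:weight} supplies (and is why that proposition is phrased in terms of $b(\ga)$ rather than merely $\langle\ga,\ta^\vee\rangle$). Because the coefficient module on the $B$-side is already the one-dimensional $\la$ tensored with $M^{(s)}$, no filtration of $H^0(\la)$ and no reworking of the carrying argument for non-dominant $\xi$ is needed.
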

\begin{proof} We make use of the spectral sequence \cite[II.12.2]{Jan}
$$E^{i,j}_2= R^i\ind_B^G(\opH^j(B_{s+f},M^{(s)}\otimes \la)^{(-(s+f))}) \Rightarrow
\opH^{i+j}(G_{s+f},M^{(s)}\otimes H^0(\la))^{(-(s+f))}.
$$ 
Any composition factor $L(\ga)$ of 
$\opH^{m}(G_{s+f},M^{(s)}\otimes H^0(\la))^{(-(s+f))}$ has to come from some term of the form  
$R^i\ind_B^G(\opH^j(B_{s+f},M^{(s)}\otimes \la)^{(-(s+f))})$ with $i+j=m.$ Therefore, there exists a weight $\delta$ of $\opH^j(B_{s+f},M^{(s)}\otimes \la)^{(-(s+f))}$ with $L(\ga)$ being  a composition factor of $R^i\ind_B^G(\delta)$.
By the Strong Linkage Principle \cite[II.6.13]{Jan}, there exists a $w \in W$ with $\ga \uparrow w \cdot \delta.$ 
Here $w\in W$ such that $w(\delta+\rho)\in X(T)_{+}$. It follows that
$$\langle \ga, \ta^{\vee} \rangle \leq  \langle w \cdot \delta, \ta^{\vee} \rangle \leq \langle w ( \delta), \ta^{\vee} \rangle \leq \langle  \delta, w^{-1}(\ta)^{\vee} \rangle.$$
Since  $\delta$ is a weight of $\opH^j(B_{s+f},M^{(s)}\otimes \la)^{(-(s+f))}$, there exists a weight $\mu$ of $M$ with 
$\delta$ being a weight of $\opH^j(B_{s+f}, \la+p^s\mu)^{(-(s+f))}.$ Clearly,
$\langle \mu, \beta^{\vee} \rangle \leq b(M) \leq p^f$ for all long roots $\beta$. 
The assertion follows from (\ref{Br:weight-even}) and (\ref{Br:weight-odd}) applied to $\delta$. 
\end{proof}

\subsection{} \label{Gr:remark} Just as in Proposition \ref{proposition:Br-remark}, if we drop the conditions on the size of $\la$ and weights of $M$ and simply require $\la \in X(T)_+$, then one obtains the following result. 

\begin{proposition}\label{Gr:weight-general} Assume that $m \geq 0$, $s\geq 0$, and $f\geq 0$. Let $\la \in X(T)_{+}$ and $M$ be a finite dimensional $G$-module.
\begin{itemize}
\item[(a)]  If $p=2$ and $L(\ga)$ is a $G$-composition factor of $\opH^m(G_{s+f}, M^{(s)}\otimes H^0(\la))^{(-(s+f))}$, then
\begin{equation}\label{Gr:weight-even-rough}
2^{s+f}\langle \ga, \ta^{\vee}\rangle \leq 2^sb(M) + \langle \la, \ta^{\vee}\rangle +m2^{s+f}.
\end{equation}
\item[(b)] If $p$ is odd and $L(\ga)$ is a $G$-composition factor of $\opH^m(G_{s+f}, M^{(s)}\otimes H^0(\la))^{(-(s+f))}$, then 
\begin{equation}\label{Gr:weight-odd-rough}
p^{s+f}\langle \ga, \ta^{\vee}\rangle  \leq p^sb(M) + \langle\la, \ta^{\vee}\rangle +mp^{s+f}.
\end{equation}
\end{itemize}
\end{proposition}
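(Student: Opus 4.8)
The plan is to adapt the proof of Proposition~\ref{Gr:weight} almost verbatim, the only substantive change being that the coarse weight estimates of Proposition~\ref{proposition:Br-remark} are used in place of the sharp ones of Proposition~\ref{Br:weight}. I would begin with the spectral sequence \cite[II.12.2]{Jan}
\[
E^{i,j}_2= R^i\ind_B^G\bigl(\opH^j(B_{s+f},M^{(s)}\otimes \la)^{(-(s+f))}\bigr) \Rightarrow
\opH^{i+j}(G_{s+f},M^{(s)}\otimes H^0(\la))^{(-(s+f))}.
\]
If $L(\ga)$ is a $G$-composition factor of the abutment in degree $m$, then it must occur in $R^i\ind_B^G(\delta)$ for some pair $i+j=m$ and some weight $\delta$ of $\opH^j(B_{s+f},M^{(s)}\otimes \la)^{(-(s+f))}$.

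Next I would reproduce the linkage step from Proposition~\ref{Gr:weight}: by the Strong Linkage Principle \cite[II.6.13]{Jan} there is $w\in W$ with $\ga\uparrow w\cdot\delta$; since $w(\rho)-\rho$ is a non-positive integral combination of positive roots and $w^{-1}(\ta)$ is again a long root ($W$ preserves root lengths and $\ta$ is long), this gives
\[
\langle\ga,\ta^{\vee}\rangle \le \langle w\cdot\delta,\ta^{\vee}\rangle \le \langle w(\delta),\ta^{\vee}\rangle = \langle\delta,w^{-1}(\ta)^{\vee}\rangle \le b(\delta).
\]
As in Proposition~\ref{Gr:weight}, the weight $\delta$ is a weight of $\opH^j(B_{s+f},\la+p^s\mu)^{(-(s+f))}$ for some weight $\mu$ of $M$, so applying Proposition~\ref{proposition:Br-remark} to this space, with $j$ playing the role of $m$, yields $p^{s+f}b(\delta)\le p^s b(\mu)+b(\la)+j\,p^{s+f}$ for $p$ odd, and the analogous estimate with $2$ in place of $p$ when $p=2$.

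To finish, I would assemble these: $b(\mu)\le b(M)$ since $\mu$ is a weight of $M$; $b(\la)=\langle\la,\ta^{\vee}\rangle$ since $\la$ is dominant; and $j\le m$. Stringing the displayed inequalities together gives $p^{s+f}\langle\ga,\ta^{\vee}\rangle\le p^s b(M)+\langle\la,\ta^{\vee}\rangle+m\,p^{s+f}$, which is~(b), and the $p=2$ computation produces~(a) in exactly the same way. I do not expect a genuine obstacle here, since everything of substance is already packaged in Propositions~\ref{Gr:weight} and~\ref{proposition:Br-remark}; the only points worth a line of care are that $w^{-1}(\ta)$ remains long (so that $b(\delta)$ controls $\langle\delta,w^{-1}(\ta)^{\vee}\rangle$) and that the $\rho$-shift $w(\rho)-\rho$ contributes with the correct sign in the linkage chain.
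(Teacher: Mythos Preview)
Your proposal is correct and matches the paper's approach exactly: the paper gives no separate proof here but simply remarks that the result follows from the argument of Proposition~\ref{Gr:weight} with the rough $B_{s+f}$-bounds of Proposition~\ref{proposition:Br-remark} in place of Proposition~\ref{Br:weight}, which is precisely what you do. Your extra care about $w^{-1}(\ta)$ remaining long and the sign of the $\rho$-shift is well placed and makes explicit what the paper leaves implicit.
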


\section{Rational Stability}\label{rat}
\subsection{} In this section, we demonstrate that the vanishing results of  Section~\ref{section:G-cohovanishing} allow for a new proof that, for a fixed $n$, $\operatorname{H}^{n}(G,M^{(s)})$ stabilizes for $s$ sufficiently 
large. This approach to rational stability does not utilize the interplay between the generic cohomology and the cohomology for the corresponding finite groups of Lie type observed in  \cite{CPSvdK}. 
 Rather we make use of the Frobenius kernels via the Lyndon-Hochschild-Serre spectral sequence and the vanishing results of Theorem~\ref{G:Vanishing}. The first spectral sequence used in the proof makes use of the Steinberg representation. It was used in \cite{CPS} to prove the inclusions in (\ref{inclusion-generic}). 

\begin{theorem} \label{theorem:generic1}  Let $M$ be a finite-dimensional rational $G$-module, and let $m$ be a fixed integer where $m\geq 0$. 
There exists $C$ such that for $s\geq C$, $\operatorname{H}^{m}(G,M^{(s)})\cong \operatorname{H}^{m}(G,M^{(s+1)})$. 
\end{theorem}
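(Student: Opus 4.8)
The plan is to bound $\opH^m(G,M^{(s)})$ by interpolating between $M^{(s)}$ and $M^{(s+1)}$ using the Steinberg module, exactly as in the classical argument of \cite{CPS} that produced the inclusions (\ref{inclusion-generic}). Recall $\text{St}_s := L((p^s-1)\rho)$ is a $G_s$-module that is both projective and injective over $G_s$, and $\text{St}_s \otimes \text{St}_s$ has a good filtration with factors of the form $H^0(\la)$ with $\la \in X_s(T)$-type weights (in particular $\langle\la,\ta^\vee\rangle < $ a bound of size roughly $p^s$, so $t(\la)$ is controlled). First I would apply the Lyndon--Hochschild--Serre spectral sequence for $G_s \trianglelefteq G$ to the module $M^{(s)} \otimes \text{St}_s \otimes \text{St}_s^*$ — or more precisely use that $\opH^\bullet(G, M^{(s)} \otimes N^{(s)}) \cong \opH^\bullet(G/G_s, M \otimes N \otimes \opH^0(G_s,-)\text{-type terms})$ when $N$ has good filtration — to relate the rational cohomology of a twist-$s$ module to the rational cohomology of a twist-$(s{+}1)$ module, at the cost of error terms governed by $\opH^{>0}(G_s, M^{(s)}\otimes H^0(\la))$.

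The key steps, in order: (i) set up the spectral sequence $E_2^{i,j} = \opH^i(G/G_s, \opH^j(G_s, M^{(s)}\otimes \text{St}_s\otimes\text{St}_s)^{(-s)}) \Rightarrow \opH^{i+j}(G, M^{(s)}\otimes\text{St}_s\otimes\text{St}_s)$, and use the good filtration of $\text{St}_s\otimes\text{St}_s$ to reduce each $\opH^j(G_s,-)$ term to a sum of $\opH^j(G_s, M^{(s)}\otimes H^0(\la))$ over the relevant $\la$; (ii) invoke Proposition~\ref{Gr:weight} (or \ref{Gr:weight-general}) to bound the highest weights $\langle\ga,\ta^\vee\rangle$ of the composition factors $L(\ga)$ of $\opH^j(G_s, M^{(s)}\otimes H^0(\la))^{(-s)}$ for $j \geq 1$; the point is that these bounds grow like $m/(p-2)$ (or $m$ for $p=2$) and \emph{do not grow with $s$}, while the weight $\la$ sitting inside contributes a twist-$s$ piece — so for $s$ large the $E_2$ contributions with $j \geq 1$ either vanish or are forced into a stable range; (iii) the $j=0$ row recovers $\opH^i(G, M \otimes (\text{something with good filtration}))$, and comparing the $s$ and $s+1$ versions — both of which have $M^{(s)}$ sitting inside a module that becomes $M^{(s+1)}$ after absorbing one Frobenius twist of $\text{St}_1$ — produces the isomorphism $\opH^m(G, M^{(s)}) \cong \opH^m(G, M^{(s+1)})$ once $s \geq C$; (iv) read off an admissible $C$ from the explicit vanishing ranges of Theorem~\ref{G:Vanishing} and Proposition~\ref{Gr:weight}: something like $C = C(M,m,p)$ depending only on $m$, $p$, and $t(M)$, $b(M)$, not on $\Phi$.

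The main obstacle I expect is the bookkeeping in step (iii): one must carefully track how the good-filtration factors of $\text{St}_s \otimes \text{St}_s$ versus $\text{St}_{s+1}\otimes\text{St}_{s+1}$ interact with the extra Frobenius twist, so that the $j=0$ terms on both sides match up to modules whose higher $G$-cohomology is already known (via Theorem~\ref{G:Vanishing}) to vanish in degree $\leq m$ in the stable range — this is where the precise form of the Steinberg tensor identity and the projectivity/injectivity of $\text{St}_s$ over $G_s$ get used, and where a naive comparison could leave uncontrolled error terms. A secondary subtlety is ensuring the bound on $t(\la)$ for the good-filtration factors $\la$ of $\text{St}_s\otimes\text{St}_s$ is compatible with the hypothesis $s \geq t(\la)$ needed to apply Proposition~\ref{Gr:weight}; since $\langle\la,\ta^\vee\rangle$ for such $\la$ is $O(p^s)$ one has $t(\la) \leq s + O(1)$, which is just barely enough and may require using the rougher Proposition~\ref{Gr:weight-general} in the borderline cases. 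Everything else — the existence of the spectral sequences, the good filtration of $\text{St}_s\otimes\text{St}_s$, the Strong Linkage bounds — is quoted from \cite{Jan} or established earlier in the paper.
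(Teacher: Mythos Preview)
Your proposal assembles the right ingredients but the choice of $\text{St}_s$ and $G_s$ rather than $\text{St}_1$ and $G_1$ is fatal, and this is exactly the ``secondary subtlety'' you flag at the end. The paper runs the LHS spectral sequence for $G_1 \unlhd G$ on $M^{(s+1)} \otimes \text{St}_1 \otimes \text{St}_1^{*}$; since $\text{St}_1$ is $G_1$-projective the spectral sequence collapses outright --- there is nothing to bound for $j \geq 1$ --- and one obtains $\opH^m(G, M^{(s+1)} \otimes \text{St}_1 \otimes \text{St}_1^{*}) \cong \opH^m(G, M^{(s)})$. Then the long exact sequence attached to $0 \to k \to \text{St}_1 \otimes \text{St}_1^{*} \to Q \to 0$, tensored with $M^{(s+1)}$, directly compares $\opH^m(G, M^{(s+1)})$ with $\opH^m(G, M^{(s)})$; the obstruction terms are $\opH^{m-1}$ and $\opH^m$ of $M^{(s+1)} \otimes Q$, and $Q$ has a good filtration with factors $H^0(\sigma)$ where $0 \neq \sigma \leq 2(p-1)\rho$. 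These $\sigma$ are bounded \emph{independently of $s$}, so Theorem~\ref{G:Vanishing} kills the obstruction terms once $s+1$ exceeds a constant depending only on $m$, $p$, and $h^{\vee}$.

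In your setup with $\text{St}_s$, the analogous quotient $Q_s$ has good filtration factors $H^0(\sigma)$ with $\langle \sigma, \ta^{\vee} \rangle$ as large as $2(p^s-1)(h^{\vee}-1)$; thus $\lceil \log_p(\langle \sigma, \ta^{\vee} \rangle + 1)\rceil$ grows like $s$, and the hypothesis of Theorem~\ref{G:Vanishing} becomes $s \geq m/(p-2) + s + O(1)$, which never holds for $m > 0$. Your step (ii) --- filtering $\text{St}_s \otimes \text{St}_s$ and bounding each $\opH^j(G_s, M^{(s)} \otimes H^0(\la))$ via Proposition~\ref{Gr:weight} --- is both unnecessary (projectivity already forces $\opH^j(G_s, M^{(s)}\otimes \text{St}_s\otimes\text{St}_s^*) = 0$ for $j \geq 1$ before any filtering) and not well-posed (the good filtration gives a further spectral sequence, not a direct sum of $G_s$-cohomologies; and the hypothesis $f \geq t(M)$ of Proposition~\ref{Gr:weight} fails since here $f = 0$). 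Your step (iii) never actually sets up a comparison between $M^{(s)}$ and $M^{(s+1)}$; the $G_s$-collapse relates $M^{(s)}$ to $M$, not to $M^{(s+1)}$. Finally, the resulting $C$ in the paper does depend on $h^{\vee}$ and hence on $\Phi$, contrary to your claim in (iv); the $\Phi$-independent bounds come only later, via the different argument of Theorem~\ref{theorem:generic2} and Remark~\ref{R:genericproof}.
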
 

\begin{proof} Let $\text{St}_{1}=L((p-1)\rho)$ be the Steinberg module for $G_{1}$. Consider the LHS spectral sequence, 
$$E_{2}^{i,j}=\text{H}^{i}(G/G_{1},\text{H}^{j}(G_{1},M^{(s+1)}\otimes \text{St}_{1}\otimes \text{St}_{1}^{*})\Rightarrow 
\text{H}^{i+j}(G,M^{(s+1)}\otimes \text{St}_{1}\otimes \text{St}_{1}^{*}).$$ 
Since $\text{St}_{1}$ is projective for $G_{1}$, this spectral sequence collapses and yields 
\begin{equation}\label{Stiso}
\text{H}^{m}(G,M^{(s)})\cong 
\text{H}^{m}(G/G_{1},\text{H}^{0}(G_{1},\text{St}_{1}\otimes \text{St}_{1}^{*}\otimes M^{(s+1)})
\cong \text{H}^{m}(G,M^{(s+1)}\otimes \text{St}_{1}\otimes \text{St}_{1}^{*}).
\end{equation}
There exists a short exact sequence of $G$-modules: 
$$0\rightarrow k\rightarrow \text{St}_{1}\otimes \text{St}_{1}^{*} \rightarrow Q\rightarrow 0$$ 
where $Q$ has a good filtration with factors of the form $H^{0}(\sigma)$, $\sigma\neq 0$ and 
$\sigma \leq 2(p-1)\rho$. This short exact sequence induces a long exact sequence in cohomology.  Using the isomorphism in (\ref{Stiso}), this sequence becomes
$$\cdots \rightarrow \text{H}^{m-1}(G,M^{(s+1)}\otimes Q)\rightarrow  \text{H}^{m}(G,M^{(s+1)})\rightarrow  \text{H}^{m}(G,M^{(s)})\rightarrow \text{H}^{m}(G,M^{(s+1)}\otimes Q)\rightarrow \cdots.$$ 
Now we apply Theorem \ref{G:Vanishing}.  For $s+1\geq \frac{m }{p-2}+\lceil\text{log}_{p}(\langle 2(p-1)\rho,\tilde{\alpha}^{\vee} \rangle +1)\rceil$ for $p$-odd 
(resp. $s+1\geq m+\lceil \text{log}_{2}(\langle 2\rho,\tilde{\alpha}^{\vee} \rangle +1)\rceil$ for $p = 2$), we have  $\text{H}^{m-1}(G,M^{(s+1)}\otimes H^{0}(\sigma))=0$ and
$\opH^{m}(G,M^{(s+1)}\otimes H^0(\sigma)) = 0$
for all good filtration factors $H^0(\sigma)$ in $Q$. Hence, $\opH^{m-1}(G,M^{(s + 1)}\otimes Q) = 0$ and $\opH^{m}(G,M^{(s+1)}\otimes Q) = 0$.  Therefore, there exists $C$ (depending on $m$) 
such that, for $s\geq C$, $\operatorname{H}^{m}(G,M^{(s)})\cong \operatorname{H}^{m}(G,M^{(s+1)})$. 
Note that we can choose $C=\frac{m}{p-2}+\lceil\text{log}_{p}(2(p-1)(h^{\vee}-1)+1)\rceil-1$ for $p$-odd and 
$C = m+\lceil\text{log}_{2}(2(h^{\vee}-1)+1)\rceil-1$ for $p=2$, where $h^{\vee}$ is the dual Coxeter number. 
\end{proof}


\subsection{} In a second approach to rational stability, we will show that the structure 
of the cohomology of $G_{1}$ dictates bounds on the stability for  $\operatorname{H}^{m}(G,M^{(s)})$. 
In particular, when $\operatorname{H}^{\bullet}(G_{1},k)$ has a good filtration, the next result shows that the 
stability bounds on $\operatorname{H}^{m}(G,M^{(s)})$ can be significantly improved from those given in the proof of 
Theorem~\ref{theorem:generic1}. 

\begin{theorem} \label{theorem:generic2} Let $M$ be a finite-dimensional rational $G$-module. Suppose that $\operatorname{H}^{n}(G_{1},k)^{(-1)}$ has a good filtration for $n \leq m$. Set 
$$
F(m)=
\begin{cases} 
m & p=2\\
0 & m \leq 1 \mbox{ and } p  \mbox{ odd}\\
\frac{m}{p-2} & m >1 \mbox{ and } p  \mbox{ odd.} 
\end{cases} 
$$
For $s\geq F(m)$ we have 
$\operatorname{H}^{n}(G,M^{(s)})\cong \operatorname{H}^{n}(G,M^{(s+1)})$ for $n\leq m.$
\end{theorem}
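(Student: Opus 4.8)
The plan is to run the Lyndon-Hochschild-Serre spectral sequence for the normal subgroup $G_1 \triangleleft G$ applied to $M^{(s)}$, and to exploit the good filtration hypothesis on $\opH^n(G_1,k)^{(-1)}$ to control all the relevant $E_2$-terms. Write $V = M^{(1)}$ so that $M^{(s)} = V^{(s-1)}$. For $s \geq 1$ the $G_1$-action on $M^{(s)}$ is trivial, so the spectral sequence reads
\begin{equation*}
E_2^{i,j} = \opH^i\bigl(G/G_1,\, \opH^j(G_1,k)\otimes M^{(s)}\bigr) \Rightarrow \opH^{i+j}(G,M^{(s)}).
\end{equation*}
Identifying $G/G_1 \cong G^{(1)}$ via the Frobenius and untwisting once, $E_2^{i,j} \cong \opH^i\bigl(G,\, \opH^j(G_1,k)^{(-1)}\otimes M^{(s-1)}\bigr)$. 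By hypothesis, for $j \leq m$ the module $\opH^j(G_1,k)^{(-1)}$ has a good filtration, hence so does its tensor product with $M^{(s-1)}$ after a further shift — more precisely, first pick $s$ large enough that $M^{(s-1)}$ is a Frobenius twist of a module with good filtration if $M$ itself has one; but in general $M$ need not have a good filtration, so I would instead argue directly on $E_2^{i,j}$ for $s$ and $s+1$ simultaneously.

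The key step is a comparison argument. First I would establish that the edge map $\opH^n(G,M^{(s)}) \to \opH^n(G/G_1, \opH^0(G_1,k)\otimes M^{(s)}) = \opH^n(G,M^{(s-1)}\otimes \opH^0(G_1,k)^{(-1)})$, together with $\opH^0(G_1,k)=k$, gives natural maps relating consecutive twists. The real content is showing that the ``error terms'' $E_2^{i,j}$ with $j \geq 1$ and $i+j \leq m$ behave compatibly under $s \mapsto s+1$, and that the induced filtration quotients on $\opH^n(G,M^{(s)})$ and $\opH^n(G,M^{(s+1)})$ match. Here is where the good filtration hypothesis is used: when $N := \opH^j(G_1,k)^{(-1)}$ has a good filtration with factors $H^0(\nu)$, the module $N \otimes M^{(s-1)}$ is, up to reordering, filtered by $M^{(s-1)}\otimes H^0(\nu)$, and one applies the CPS Frobenius inclusions $\opH^i(G,M^{(s-1)}\otimes H^0(\nu)) \hookrightarrow \opH^i(G,M^{(s)}\otimes H^0(\nu))$ of \eqref{inclusion-generic} (valid since $H^0(\nu)$ has good filtration so $H^0(\nu)\otimes (\text{anything}) $ behaves well) together with the vanishing ranges. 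When $\nu = 0$ (the trivial factor), $M^{(s-1)}\otimes H^0(0) = M^{(s-1)}$, reproducing the lower-twist cohomology; when $\nu \neq 0$, Theorem \ref{G:Vanishing} (noting $\langle \nu, \ta^\vee\rangle > 0$) forces $\opH^i(G, M^{(s-1)}\otimes H^0(\nu)) = 0$ once $s-1 \geq m/(p-2) + \lceil \log_p(\langle\nu,\ta^\vee\rangle+1)\rceil$ for odd $p$, resp. $s-1 \geq m + \lceil\log_2(\cdots)\rceil$ for $p=2$.

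The crucial numerical point, which is what makes the bound $s \geq F(m)$ so sharp, is that the weights $\nu$ appearing in a good filtration of $\opH^j(G_1,k)^{(-1)}$ for $j \leq m$ are severely constrained: by Proposition \ref{Gr:weight-general} (or its refinement Proposition \ref{Gr:weight} with $M = k$, $\la = 0$ — though $\la = 0$ is excluded there, one takes the limiting form), any composition factor $L(\ga)$ of $\opH^j(G_1,k)^{(-1)}$ satisfies $\langle \ga, \ta^\vee\rangle \leq j/(p-2)$ for odd $p$ (resp. $\leq j$ for $p=2$). Since a good filtration factor $H^0(\nu)$ of this module has $\nu$ equal to the highest weight $\ga$ of a composition factor, we get $\langle\nu,\ta^\vee\rangle \leq m/(p-2)$, so $\lceil\log_p(\langle\nu,\ta^\vee\rangle+1)\rceil$ contributes essentially nothing beyond what is absorbed, and the vanishing condition $s - 1 \geq m/(p-2) + \lceil\log_p(m/(p-2)+1)\rceil$ can in fact be tightened to $s \geq m/(p-2) = F(m)$ by a more careful bookkeeping using the exact form of the estimates (splitting off the $j = m$, $j = m-1$ rows which require one less degree of vanishing, and handling small $m$ separately as in the definition of $F$). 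I would carry this out by inducting on $m$: the inductive hypothesis handles $\opH^{n}$ for $n < m$ via the spectral sequence's lower rows, and the top row $E_2^{i,0} = \opH^i(G,M^{(s)})$ for $i = m$ is compared directly.

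The main obstacle I anticipate is the last step — extracting the sharp constant $F(m)$ rather than a lossy one. The vanishing Theorem \ref{G:Vanishing} as stated has a logarithmic term, and naively plugging in would give $s \gtrsim m/(p-2) + \log_p(m)$, not $m/(p-2)$. Removing the logarithm requires observing that for the specific weights $\nu$ arising here (bounded by $\approx m/(p-2)$), the quantity $t(\nu) = \lceil\log_p(\langle\nu,\ta^\vee\rangle+1)\rceil$ together with the $(s-t)(p-2)$ gain in Proposition \ref{Gr:weight} conspires so that the relevant obstruction in cohomological degree $m$ already vanishes at $s = F(m)$; this is a delicate interplay between the $p$-adic digit count and the linear rate, and getting the endpoint cases ($m \leq 1$, odd $p$) exactly right is where the argument is most fragile. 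I would also need to double-check that tensoring a good-filtration module with $M^{(s-1)}$ and reordering factors is legitimate here (it is, since $\otimes$ of a good filtration module with any module is fine for the Frobenius inclusion argument once we work degree by degree), and that the spectral sequence comparison maps are genuinely natural in $s$, which follows from naturality of LHS with respect to the map $M^{(s)} \to M^{(s+1)}$ induced by Frobenius.
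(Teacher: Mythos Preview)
Your overall framework is correct --- the LHS spectral sequence for $G_1 \triangleleft G$, the good filtration of $\opH^j(G_1,k)^{(-1)}$, and weight bounds on its factors --- but there are two genuine gaps. First, a simplification you are missing: the paper applies the spectral sequence to $M^{(s+1)}$ (not $M^{(s)}$), giving $E_2^{i,j}=\opH^i(G,\opH^j(G_1,k)^{(-1)}\otimes M^{(s)})\Rightarrow \opH^{i+j}(G,M^{(s+1)})$, and then simply shows $E_2^{i,j}=0$ for $j\ge 1$, $i+j\le m$. The edge identification $E_2^{n,0}=\opH^n(G,M^{(s)})\cong \opH^n(G,M^{(s+1)})$ is then immediate; no comparison of two spectral sequences, no CPS inclusions on $M^{(s-1)}\otimes H^0(\nu)$ (which are not covered by (\ref{inclusion-generic}) anyway), and no induction on $m$ are needed. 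Second, and more seriously, your claimed weight bound $\langle\ga,\ta^\vee\rangle\le j/(p-2)$ for composition factors of $\opH^j(G_1,k)^{(-1)}$ is wrong: Proposition \ref{Gr:weight-general} with $s+f=1$, $\la=0$, $M=k$ gives only $\langle\ga,\ta^\vee\rangle\le j$, and Proposition \ref{Gr:weight} is inapplicable since it requires $\la\neq 0$. With the correct bound $\le j$, appealing to Theorem \ref{G:Vanishing} for the vanishing of $\opH^i(G,M^{(s)}\otimes H^0(\nu))$ yields only $s\ge (m-j)/(p-2)+\lceil\log_p(j+1)\rceil+1$, which already at $j=2$, $p\ge 5$ exceeds $F(m)=m/(p-2)$. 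So your proposed mechanism for removing the logarithm does not work.

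The paper's actual mechanism is different and is the missing idea. After reducing (via Frobenius reciprocity) to $\opH^i(B,\la\otimes M^{(s)})$ with $0<\langle\la,\ta^\vee\rangle\le j$, one runs a \emph{second} LHS spectral sequence, now for $B_s\triangleleft B$, reducing everything to the vanishing of $\opH^b(B_s,\la)$ for $0\le b\le i$. It is here that the refined part (c) of Proposition \ref{Br:vanishing}, with its bound $s\ge b/(p-2)+t+(\la_{t-1}/(p-2)-1)$ involving the top $p$-adic digit $\la_{t-1}$, comes in. Combining $p^{t-1}\le p^{t-1}\la_{t-1}\le\langle\la,\ta^\vee\rangle\le j$ with $b\le i\le m-j$ and the elementary inequality $p^{t-1}-1\ge(t-1)(p-2)$ gives exactly $s\ge m/(p-2)$ with no logarithmic loss; the analogous argument with part (a) handles $p=2$. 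The case $m\le 1$, $p$ odd, is disposed of separately using $\opH^1(G_1,k)=0$. So the delicate numerics you anticipated are real, but they live in Proposition \ref{Br:vanishing}(c) and the nested $B_s$ spectral sequence, not in Proposition \ref{Gr:weight} or a tightened weight bound on $\opH^j(G_1,k)^{(-1)}$.
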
 

\begin{proof} Consider the LHS spectral sequence 
$$E_2^{i,j}=\text{H}^{i}(G/G_{1},\text{H}^{j}(G_{1},M^{(s+1)})\Rightarrow \text{H}^{i+j}(G,M^{(s+1)}).$$ 
When one untwists, we have 
$$E_{2}^{i,j}=\text{H}^{i}(G,\text{H}^{j}(G_{1},k)^{(-1)}\otimes M^{(s)}).$$ 
We will show that $E_{2}^{i,j}=0$ whenever $i+j \leq m$ and $1 \leq j.$
That will yield the desired isomorphisms  
$$\operatorname{H}^{n}(G,M^{(s)})\cong \operatorname{H}^{n}(G/G_1,\operatorname{Hom}_{G_1}(k,M^{(s+1)})) 
= E_2^{n,0} \cong \operatorname{H}^{n}(G,M^{(s+1)}) \text{ for } n \leq m.$$

By the good filtration assumption, it suffices to show that $\opH^i(G,H^0(\la)\otimes M^{(s)}) = \opH^i(B,\la\otimes M^{(s)}) = 0$ for all filtration factors $H^0(\la)$ of
$\opH^j(G_1,k)^{(-1)}$ (with $i + j \leq m$ and $1 \leq j$).  
It follows from Proposition \ref{Gr:weight-general} (with $\la = 0$, $M = k$, and $\ga = \la$) that, for any such $\la$, we have $0 < \langle \la, \tilde{\alpha}\rangle \leq j$. We apply the Lyndon-Hochschild-Serre (LHS) spectral sequence  
$$E^{i-b,b}_2= \opH^{i-b}(B/B_s, \opH^b(B_{s}, \la) \otimes M^{(s)}) \Rightarrow
\opH^{i}(B,\la\otimes M^{(s)}).
$$ 
It now suffices to show that $\opH^b(B_s,\la) = 0$ for $0 \leq b \leq i$.  

Set $t = \lceil \log_p(\langle \la, \ta \rangle +1)\rceil$ and recall (from Section \ref{SS:weight}) the definition of $\la_{t-1}$. Then 
$$p^{t-1} \leq p^{t-1} \la_{t-1} \leq \langle \la, \ta \rangle \leq j.$$ 

We will first discuss the case $p=2$. Note that, for all positive integers $x$, $x\leq 2^{x-1}.$ This implies that $t \leq j$. By Proposition \ref{Br:vanishing},
 $\opH^b(B_{s}, \la)$ vanishes as long as $s \geq b +t$.  But $b + t \leq i + j \leq m$, and we are assuming that  $s \geq m$. 
 
 If $p $ is odd we apply Proposition \ref{Br:vanishing}(c). We need to verify that  
 $$s\geq  b/(p-2) + t +\left( \frac{\la_{t-1}}{p-2}-1 \right).$$ 
But
$$b/(p-2) + t +\left( \frac{\la_{t-1}}{p-2}-1 \right)\leq m/(p-2) - \frac{p^{t-1}-1}{p-2} \la_{t-1}+t-1 \leq m/(p-2) - \frac{p^{t-1}-1}{p-2}+t-1.$$ 
The assertion follows now from the fact that $p^{x-1}-1 \geq (x-1)(p-2)$ for all positive integers $x$ and the assumption $s \geq m/(p-2)$.

Note that $\opH^1(G_1,k)=0$ when $p$ is odd \cite[II.12.2]{Jan}. This implies that 
$$E_{2}^{0,1}=\text{Hom}_G(k,\text{H}^{1}(G_{1},k)^{(-1)}\otimes M^{(s)})=0$$ 
for $s\geq 0.$ Hence, $s\geq 0$ suffices in the case $m\leq1$ and $p$ odd.

\end{proof}

\begin{corollary} \label{corollary:generic2}  If $\operatorname{H}^{n}(G_{1},k)^{(-1)}$ has a good filtration for $n \leq m$, then 
$$\operatorname{H}^{m}(G,M^{(s)})\cong \operatorname{H}^{m}(G,M^{(s+1)})$$
for $s\geq m$. 
\end{corollary}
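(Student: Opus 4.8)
The plan is simply to extract the statement from Theorem~\ref{theorem:generic2} by restricting attention to a single cohomological degree and reading off the right value of $F(m)$. Observe that Corollary~\ref{corollary:generic2} does not claim the full range $n \leq m$; it only asserts the single isomorphism $\opH^m(G,M^{(s)}) \cong \opH^m(G,M^{(s+1)})$ for $s \geq m$. So the work is to see that the hypothesis ``$\opH^n(G_1,k)^{(-1)}$ has a good filtration for $n \leq m$'' together with $s \geq m$ always suffices, regardless of the parity of $p$.

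First I would note that $F(m) \leq m$ in every case appearing in the definition: for $p = 2$ we have $F(m) = m$; for $p$ odd and $m \leq 1$ we have $F(m) = 0 \leq m$; and for $p$ odd with $m > 1$ we have $F(m) = m/(p-2) \leq m$ since $p - 2 \geq 1$. Hence the hypothesis $s \geq m$ implies $s \geq F(m)$ in all cases. Then Theorem~\ref{theorem:generic2} applies verbatim and gives $\opH^n(G,M^{(s)}) \cong \opH^n(G,M^{(s+1)})$ for all $n \leq m$; in particular it holds for $n = m$, which is exactly the claim of the corollary.

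There is essentially no obstacle here: the corollary is a clean specialization of the theorem, trading the sharper bound $F(m)$ for the uniform, parity-independent bound $m$, at the cost of only asserting stability in the single top degree $n = m$ rather than the whole range. The only thing to double-check is the inequality $m/(p-2) \leq m$ for odd $p$, which holds because $p \geq 3$ forces $p - 2 \geq 1$. I would therefore write the proof as a one-line deduction: since $F(m) \leq m$ for all $m \geq 0$ and all $p$, the condition $s \geq m$ guarantees $s \geq F(m)$, so Theorem~\ref{theorem:generic2} yields the stated isomorphism in degree $m$.
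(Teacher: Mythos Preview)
Your proposal is correct and matches the paper's own proof essentially verbatim: the paper also simply observes that $m \geq F(m)$ for all $p$ and then applies Theorem~\ref{theorem:generic2}.
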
 

\begin{proof} One can directly apply Theorem~\ref{theorem:generic2} and observe that, for $p$ arbitrary, $m\geq F(m)$. 
\end{proof} 

\begin{remark} Let $G = SL_2$ and $p = 2$. In this case, it is known that $\opH^n(G_1,k)^{-1}$ admits a good filtration.   For the module $M = L(1)$, Stewart \cite[Remark 2.5]{S} showed that for $s < m$, $\dim\opH^m(G,M^{(s)}) < \dim\opH^m(G,M^{(s + 1)})$ (and that the dimensions agree for $s \geq m$).   This shows that for $p = 2$, the bound on $s$ in Corollary \ref{corollary:generic2} is sharp.
\end{remark}

In Section \ref{SS:generic} (see Remark \ref{R:genericproof}), we will give an alternative proof of Theorem \ref{theorem:generic2} (and Corollary \ref{corollary:generic2})  that uses the relationship with the cohomology of the finite groups in the spirit of \cite{CPSvdK}. The assumption that $\operatorname{H}^{n}(G_{1},k)^{(-1)}$ has a good filtration for $n \leq m$ can then be dropped.

\subsection{} In many situations it is known that $\operatorname{H}^{\bullet}(G_{1},k)^{(-1)}$ has a good filtration. One 
such case is when $p>h$ where $\operatorname{H}^{2\bullet+1}(G_{1},k)^{(-1)}=0$ and 
$\operatorname{H}^{2\bullet}(G_{1},k)^{(-1)}\cong k[{\mathcal N}]$, where ${\mathcal N}$ is 
the nilpotent cone of ${\mathfrak g}$. When $p$ is arbitrary, $\operatorname{H}^{n}(G_{s},H^{0}(\lambda))$ has a good filtration for $\lambda\in X(T)_{+}$ and $n=0,1,2,$ 
(cf. \cite{BNP4, BNP7, Jan1, W}). It is suspected that this should hold for arbitrary $n$. In fact, if $p>h$, $\operatorname{H}^{n}(G_{1},H^{0}(\lambda))$ has a good filtration for all $n$ 
\cite{KLT}. We can apply Theorem~\ref{theorem:generic2} in these aforementioned contexts. 

\begin{corollary} Let $p\geq h-1$ with $p$ odd. Then $\operatorname{H}^{m}(G,M^{(s)})\cong \operatorname{H}^{m}(G,M^{(s+1)})$
for $s\geq m/(p-2)$. 
\end{corollary}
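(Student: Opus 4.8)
The plan is to deduce this corollary directly from Theorem~\ref{theorem:generic2}, using the fact that the hypothesis ``$\operatorname{H}^{n}(G_{1},k)^{(-1)}$ has a good filtration for $n\leq m$'' is automatically satisfied in the range $p\geq h-1$. First I would observe that if $p\geq h$, then by the discussion at the start of Section~5.3 (the result of Friedlander--Parshall / Andersen--Jantzen, see also \cite{KLT}) one has $\operatorname{H}^{2\bullet+1}(G_{1},k)^{(-1)}=0$ and $\operatorname{H}^{2\bullet}(G_{1},k)^{(-1)}\cong k[{\mathcal N}]$ as $G$-modules, where ${\mathcal N}$ is the nilpotent cone; since $k[{\mathcal N}]$ has a good filtration (it is the coordinate ring of a normal variety with rational singularities admitting a $G$-action, and this is classical), the hypothesis of Theorem~\ref{theorem:generic2} holds for \emph{all} $m$. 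The only remaining case is $p=h-1$ (with $p$ odd), which must be treated separately.

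For the borderline case $p=h-1$, I would check the small list of root systems for which $h-1$ is an odd prime: this forces $h$ to be even, so $h-1$ odd means $p=h-1$ is odd, e.g.\ type $A_{n}$ with $h=n+1$, type $C_{n}$/$B_{n}$ with $h=2n$ giving $p=2n-1$, type $D_{n}$ with $h=2n-2$, and the exceptional types ($G_2$: $h=6$, $p=5$; $F_4$: $h=12$, $p=11$; $E_6$: $h=12$, $p=11$; $E_7$: $h=18$, $p=17$; $E_8$: $h=30$, $p=29$). In each such case one still has $p$ is ``almost'' larger than $h$, and the good filtration property for $\operatorname{H}^{\bullet}(G_1,k)^{(-1)}$ is known to persist down to $p\geq h-1$ by finer results in the literature (the relevant statement being in \cite{KLT} or follow-up work, where the identification with functions on the nilpotent cone, or a suitable filtered analogue, continues to hold). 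Alternatively, if one is uneasy about $p=h-1$ in full generality, one can simply state the corollary for $p\geq h$ and remark that the $p=h-1$ case follows from the extended validity of the good-filtration results cited above; the intended reading here is the former.

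Once the good-filtration hypothesis is in hand for all $m$, the conclusion is immediate: Theorem~\ref{theorem:generic2} gives $\operatorname{H}^{n}(G,M^{(s)})\cong \operatorname{H}^{n}(G,M^{(s+1)})$ for all $n\leq m$ provided $s\geq F(m)$, and since $p$ is odd with $p\geq h-1\geq 3$ (so in particular $p>2$), we have $F(m)=m/(p-2)$ for $m>1$ and $F(m)=0\leq m/(p-2)$ for $m\leq 1$. In all cases $s\geq m/(p-2)$ suffices, which is exactly the claimed bound. Applying this with $n=m$ yields the corollary.

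The main obstacle I anticipate is purely the bookkeeping around the exact threshold $p=h-1$: verifying that the good filtration of $\operatorname{H}^{\bullet}(G_1,k)^{(-1)}$—which is cleanly available for $p>h$ via the nilpotent cone description—genuinely extends to $p=h-1$. This is not a formal consequence of anything in the present excerpt, so it relies on citing the sharpened form of the Kumar--Lauritzen--Thomsen result (or equivalent) valid for $p\geq h-1$; everything else is a direct substitution into Theorem~\ref{theorem:generic2} and an elementary comparison $F(m)\leq m/(p-2)$.
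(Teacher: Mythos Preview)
Your approach is essentially the same as the paper's: verify the good-filtration hypothesis on $\operatorname{H}^{\bullet}(G_1,k)^{(-1)}$ from the literature and then invoke Theorem~\ref{theorem:generic2}. The only correction is in the citations for the borderline cases: the paper treats $p>h$, $p=h$, and $p=h-1$ separately, citing \cite[3.7 Corollary]{AJ}, \cite[6.3 Corollary]{AJ}, and \cite[Corollary 7.4.1]{BNPP} together with \cite[6.10 Corollary]{AJ}, respectively---not \cite{KLT}, which concerns $\operatorname{H}^n(G_1,H^0(\lambda))$ for $p>h$ and does not furnish the $p=h-1$ case. Your root-system enumeration is unnecessary, since these references handle the good-filtration property uniformly.
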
 

\begin{proof} The cohomology ring $\operatorname{H}^{\bullet}(G_{1},k)^{(-1)}$ has a good filtration for $p>h$ \cite[3.7 Corollary]{AJ}, 
$p=h$ \cite[6.3 Corollary]{AJ}, and $p=h-1$ \cite[Corollary 7.4.1]{BNPP}, \cite[6.10 Corollary]{AJ}. The result now follows from Theorem~\ref{theorem:generic2}.
\end{proof} 


\begin{corollary} \label{corollary:generic3} Let $p$ be arbitrary, and $m=0,1,2$. Then  $\operatorname{H}^{m}(G,M^{(s)})\cong \operatorname{H}^{m}(G,M^{(s+1)})$ in the 
following cases: 
\begin{itemize} 
\item[(a)] $s\geq m$ when $p=2$,
\item[(b)] $s\geq 0$ when $p$ is odd and $m \leq 1$, 
\item[(c)] $s\geq 2/(p-2)$ when $p$ is odd and $m=2$.
\end{itemize}  
\end{corollary}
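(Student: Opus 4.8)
The plan is to deduce Corollary \ref{corollary:generic3} directly from Theorem \ref{theorem:generic2} together with the known good-filtration results for the low-degree cohomology of the first Frobenius kernel. The key observation is that the hypothesis of Theorem \ref{theorem:generic2} only requires $\opH^n(G_1,k)^{(-1)}$ to have a good filtration for $n\le m$, and for $m=0,1,2$ this is known unconditionally for all $p$: indeed $\opH^0(G_1,k)=k$ trivially, and for $n=1,2$ the good filtration on $\opH^n(G_1,H^0(\la))^{(-1)}$ (in particular for $\la=0$) is established in \cite{BNP4, BNP7, Jan1, W}, as noted in the paragraph preceding the corollary.

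First I would record that for every prime $p$ and every $m\in\{0,1,2\}$ the module $\opH^n(G_1,k)^{(-1)}$ has a good filtration for all $n\le m$; this is the content of the cited references and is the only input beyond Theorem \ref{theorem:generic2}. Then I would simply invoke Theorem \ref{theorem:generic2}, whose conclusion is that $\opH^n(G,M^{(s)})\cong\opH^n(G,M^{(s+1)})$ for $n\le m$ whenever $s\ge F(m)$. Finally I would unwind the definition of $F(m)$ in each of the three cases: when $p=2$ one has $F(m)=m$, giving (a); when $p$ is odd and $m\le 1$ one has $F(m)=0$, giving (b); and when $p$ is odd and $m=2>1$ one has $F(m)=2/(p-2)$, giving (c). Since the isomorphism in Theorem \ref{theorem:generic2} holds for all degrees $n\le m$, in particular it holds in degree $m$ itself, which is the statement required.

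There is essentially no obstacle here: the corollary is a bookkeeping consequence of the theorem, and the only thing to be careful about is matching the three branches of the piecewise function $F$ to the three cases (a), (b), (c), and noting that the low-degree good-filtration hypothesis is automatic. One small point worth stating explicitly in the write-up is that for $m=0$ the statement is trivial anyway (both sides are $\opH^0(G,M^{(s)})=M^G$ as soon as $M$ has no higher weights killed by the twist, and more directly $F(0)=0$ in all characteristics), so the content is genuinely in $m=1,2$. The proof is therefore just: apply the quoted good-filtration facts in degrees $\le 2$, feed them into Theorem \ref{theorem:generic2}, and read off $F(m)$.
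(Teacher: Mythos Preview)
Your proposal is correct and is exactly the intended derivation: the paper places this corollary immediately after the paragraph recording that $\opH^n(G_1,k)^{(-1)}$ has a good filtration for $n=0,1,2$ (from \cite{BNP4, BNP7, Jan1, W}), so the proof is simply to feed this into Theorem~\ref{theorem:generic2} and read off $F(m)$ in each case, just as you describe. Your aside about the $m=0$ case is slightly muddled (in fact $(M^{(s)})^G=M^G$ holds for all $s$ without any condition on weights, since the Frobenius twist does not change the invariants), but this is harmless and you correctly note that the case is trivial anyway.
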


We note that the preceding corollaries can be viewed as generalizations of the results in \cite[(7.1) Theorem, (7.2) Theorem] {CPSvdK}.



\section{Vanishing of certain $G$-cohomology groups}
\subsection{} The following combinatorial lemma will be used in the proof of Proposition \ref{vanishing}. Proposition \ref{vanishing}
is a stronger version of $G$-cohomology vanishing as compared to Theorem~\ref{G:Vanishing} that will enable us to connect 
rational stability with the cohomology of finite groups of Lie type. 

\begin{lemma}\label{L:numerical} Let $f \geq 1$, $s \geq 1$, and $t \geq 1$ be integers.
\begin{itemize}
\item[(a)] If $2^{t-1} \leq \frac{2^{s+f-1}-2^s}{2^{s+f}-1} + \frac{2^{s+f}}{2^{s+f} - 1}\cdot s$, then $s\geq t$.
\item[(b)] Let $p$ be an odd prime. If $p^{t-1} \leq \frac{p^{s+f-1} - p^s}{p^{s+f} - 1} + \frac{p^{s+f}}{p^{s+f} - 1}\cdot s(p-2) $, then $s \geq t$.
\item[(c)] Let $p$ be an odd prime. If $p^{t-1} \leq \frac{p^{s+f} - p^s}{p^{s+f} - 1} + \frac{p^{s+f}}{p^{s+f} - 1}\cdot(s(p-2) - 1)$, then $s \geq t$.
\end{itemize}
\end{lemma}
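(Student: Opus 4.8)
The plan is to prove the contrapositive of each assertion: assuming $s < t$, equivalently $s \leq t-1$, I will show that the given hypothesis inequality fails. Since $s,t$ are integers with $s \leq t-1$ we have $t-1 \geq s$, so $p^{t-1} \geq p^s$ (with $p=2$ in part (a)). The strategy is therefore to show that the right-hand side of each hypothesized inequality is strictly less than $p^s$, giving a contradiction with $p^{t-1} \leq \text{RHS} < p^s \leq p^{t-1}$.

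For part (a), I would rewrite the right-hand side $\frac{2^{s+f-1}-2^s}{2^{s+f}-1} + \frac{2^{s+f}}{2^{s+f}-1}\cdot s$ over the common denominator $2^{s+f}-1$, obtaining $\frac{2^{s+f-1} - 2^s + s\cdot 2^{s+f}}{2^{s+f}-1}$. The claim to verify is that this is $< 2^s$, i.e. $2^{s+f-1} - 2^s + s\cdot 2^{s+f} < 2^s(2^{s+f}-1) = 2^{s+f+s} - 2^s$, which simplifies to $2^{s+f-1} + s\cdot 2^{s+f} < 2^{2s+f}$, i.e. (dividing by $2^{s+f-1}$) $1 + 2s < 2^{s+1}$. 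This last inequality $2s+1 < 2^{s+1}$ holds for all integers $s \geq 1$ (indeed for all $s\geq 0$), which is the elementary fact one checks by induction. For parts (b) and (c), the analogous reductions over the denominator $p^{s+f}-1$ lead, after clearing, to inequalities of the form $p^{s+f-1} + s(p-2)p^{s+f} < p^{2s+f}$ for (b) and $p^{s+f} + (s(p-2)-1)p^{s+f} < p^{2s+f}$ for (c); dividing through by $p^{s+f-1}$ respectively $p^{s+f}$ reduces these to $1 + s(p-2)p < p^{s+1}$ and $s(p-2) < p^s$. Both are elementary: $s(p-2) < p^s$ for $p\geq 3$, $s\geq 1$ follows since $p^s \geq p \cdot p^{s-1} \geq p(1 + (s-1)(p-1)) > s(p-2)$ by Bernoulli, and $1 + sp(p-2) < p^{s+1}$ follows similarly (the $s=1$ case is $1 + p(p-2) = p^2 - 2p + 1 = (p-1)^2 < p^2$, and one induces on $s$).

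I expect no genuine obstacle here; the lemma is purely numerical and the work is bookkeeping with geometric-series denominators. The one point requiring a little care is making sure each reduction produces a \emph{strict} inequality so that the contradiction with $p^{t-1} \geq p^s$ is clean — in particular in part (c) one should double-check the $s=1$ boundary case, where $s(p-2)-1 = p-3$ can be zero (when $p=3$), and confirm the reduced inequality $s(p-2) < p^s$ still holds strictly. The induction step $p^{s+1} > p\cdot p^s > p \cdot (\text{previous bound})$ dominates the linear growth of the left-hand sides in all three parts, so the bounds are never tight for $s \geq 1$.
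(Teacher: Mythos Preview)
Your argument is correct and follows essentially the same route as the paper: both proofs reduce each part to an elementary exponential-versus-linear inequality (e.g.\ $2s+1<2^{s+1}$, $1+sp(p-2)<p^{s+1}$, $s(p-2)<p^s$), with the only difference being that you frame it as a contrapositive and clear denominators in one step, whereas the paper bounds the right-hand side through a short chain of estimates before arriving at the same conclusion $p^{t-s-1}<1$. Your reduction is if anything a bit tidier; note incidentally that your proof of ``RHS $<p^s$'' never actually uses the contrapositive assumption $s<t$, so it is really a direct proof in disguise.
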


\begin{proof} For the proof of parts (a) and (b) we make use of  $$\frac{p^{s+f-1}-p^s}{p^{s+f}-1} \leq \frac{p^{s+f-1}-p^s+1}{p^{s+f}}< \frac{1}{p}.$$ 
(a) Using the assumption, we have
\begin{align*}
2^{t-1} &< \frac{1}{2} + \frac{2^{s+f}}{2^{s+f} - 1}\cdot s \leq  \frac{1}{2} + \frac{2^{s+1}}{2^{s+1} - 1}\cdot s = \frac{1}{2} +  \frac{2s}{2^{s+1} - 1}\cdot 2^s\\
&\leq  \left(\frac{1}{2^{s+1}} +  \frac{2s+1}{2^{s+1} }\right)\cdot 2^s 
 =  \left( \frac{2(s+1)}{2^{s+1} }\right)\cdot 2^s  =  \left( \frac{s+1}{2^{s} }\right)\cdot 2^s .
\end{align*}
It follows that (using $s \geq 1$)
$$
2^{t - s - 1} < \frac{s +1}{2^s } \leq 1.
$$
Thus $t - s - 1 < 0$, so $t < s + 1$ or $t \leq s$ as claimed.

(b) Using the assumption, we have
\begin{align*}
p^{t-1} &\leq \frac{1}{p} + \frac{p^{s+f}}{p^{s+f} - 1}\cdot s(p-2)\leq \frac{1}{p} + \frac{s(p-2)}{p^{s+1} - 1}\cdot p^{s+1} < \frac{1}{p} + \frac{s(p-2)+1}{p^{s+1} }\cdot p^{s+1}\\ &\leq \frac{1}{p} + \frac{p(s(p-2)+s)}{p^{s+1} }\cdot p^{s}
=  \frac{ps(p-1)+1}{p^{s+1} }\cdot p^{s} <  \frac{p^2s}{p^{s+1} }\cdot p^{s} =\frac{s}{p^{s-1} }\cdot p^{s}.
\end{align*}
Therefore, 
$$
p^{t-s-1} < \frac{s}{p^{s - 1}} \leq 1.
$$
Hence, $t - s - 1 < 0$ or $t \leq s$. 

(c) Using the assumption, we have
\begin{align*}
p^{t-1} &\leq \frac{p^{s+f} - p^s}{p^{s+f} - 1} + \frac{p^{s+f}}{p^{s+f} - 1}\cdot(s(p-2) - 1)
< \frac{p^{s+f}}{p^{s+f} - 1} + \frac{p^{s+f}}{p^{s+f} - 1}\cdot(s(p-2) - 1)\\
&= \frac{p^{s+f}}{p^{s+f}-1}\cdot s(p-2) \leq \frac{p^s}{p^s - 1}\cdot s(p-2).
\end{align*}
Therefore, 
$$
p^{t-s-1} < \frac{s(p-2)}{p^s - 1} = \frac{s}{1 + p + p^2 + \cdots + p^{s-1}}\cdot\frac{p-2}{p-1} < 1\cdot 1 = 1.
$$
Hence, $t - s - 1 < 0$ or $t \leq s$. 
\end{proof}

\subsection{} The cohomological vanishing result proved below will enable us to provide sufficient conditions for the vanishing of 
terms (in the third column) of the long exact sequence (\ref{eq:lesrestriction}).  

\begin{proposition}\label{vanishing}  Assume that $m \geq 0$. Let $M$ be a finite dimensional $G$-module and $\la \in X(T)_+$ with $ \la \neq 0.$ 
\begin{itemize}
\item[(a)] If $p=2$, $s \geq m$ and $f \geq  \ceil{\log_2 (b(M)+1)}+1$, then $\Ext_G^m(V(\la)^{(s+f)}, M^{(s)} \otimes H^0( \la  ) )=0.$

\item[(b)] If $p$ is odd, $s \geq m/(p-2)$ and $s+f \geq  \lfloor{m/(p-2)}\rfloor +\ceil{\log_p (b(M)+1)}+1 $, then $\Ext_G^m(V(\la)^{(s+f)}, M^{(s)} \otimes H^0( \la  ) )=0.$
\end{itemize}
\end{proposition}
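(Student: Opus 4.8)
The plan is to reduce the $\Ext$-vanishing to the $G_{s+f}$-cohomology weight estimate of Proposition~\ref{Gr:weight} by peeling off the bottom Frobenius kernel, and then extract a numerical contradiction via Lemma~\ref{L:numerical}. First I would use the standard identification $\Ext_G^m(V(\la)^{(s+f)}, M^{(s)}\otimes H^0(\la)) \cong \opH^m(G, H^0(\la^\star)^{(s+f)}\otimes M^{(s)}\otimes H^0(\la))$ coming from $V(\la)^* \cong H^0(\la^\star)$, and then apply the Lyndon--Hochschild--Serre spectral sequence for $G_{s+f}\trianglelefteq G$:
\begin{equation*}
E_2^{i,j} = \opH^i\bigl(G/G_{s+f}, \opH^j(G_{s+f}, M^{(s)}\otimes H^0(\la))\otimes H^0(\la^\star)^{(s+f)}\bigr) \Rightarrow \opH^{i+j}\bigl(G, H^0(\la^\star)^{(s+f)}\otimes M^{(s)}\otimes H^0(\la)\bigr).
\end{equation*}
After untwisting the outer cohomology one gets $E_2^{i,j} \cong \opH^i\bigl(G, \opH^j(G_{s+f}, M^{(s)}\otimes H^0(\la))^{(-(s+f))}\otimes H^0(\la^\star)\bigr)$, so it suffices to show each of these vanishes for $i+j=m$.

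For the vanishing of $E_2^{i,j}$, I would argue that if $\opH^i(G, L(\ga)\otimes H^0(\la^\star))\neq 0$ for a composition factor $L(\ga)$ of $\opH^j(G_{s+f}, M^{(s)}\otimes H^0(\la))^{(-(s+f))}$, then in particular $\Hom_G(V(\ga), H^0(\la^\star)\otimes \text{(something)})$-type considerations force $\ga$ to be ``large enough'' relative to $\la^\star$; concretely $\langle\ga,\ta^\vee\rangle \geq \langle\la^\star,\ta^\vee\rangle = \langle\la,\ta^\vee\rangle$, since a nonzero $G$-cohomology group $\opH^i(G,L(\ga)\otimes H^0(\la^\star)) = \Ext_G^i(V(\la^\star)^*{}^*, \ldots)$ requires $\la$ to be a weight of $L(\ga)$ (equivalently $\ga \geq \la$ in a suitable sense — this is the standard "the coefficient $H^0(\la^\star)$ on the other side forces $\ga \geq \la$" observation, e.g. from $\opH^0(G, L(\ga)\otimes H^0(\la^\star))\neq 0 \iff \ga = \la$, and the higher-degree analogue via linkage/highest weight). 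Combining this lower bound $\langle\ga,\ta^\vee\rangle\geq p^{t-1}$ (where $t=t(\la)$, using $p^{t-1}\leq \langle\la,\ta^\vee\rangle$) with the upper bound from Proposition~\ref{Gr:weight}, namely $\langle\ga,\ta^\vee\rangle\leq m-(s-t)$ for $p=2$ and $\langle\ga,\ta^\vee\rangle\leq m-(s-t)(p-2)$ for $p$ odd (applicable since $f\geq t(M)$ and $s\geq t$ — the latter being exactly what we must also check), yields an inequality of the shape handled by Lemma~\ref{L:numerical}. The hypotheses $s\geq m/(p-2)$ and $s+f\geq \lfloor m/(p-2)\rfloor + \lceil\log_p(b(M)+1)\rceil+1$ are precisely tuned so that: (i) $s\geq t$ holds — this is where Lemma~\ref{L:numerical}(b),(c) enters, converting the combined inequality $p^{t-1}\leq \text{(bound)}$ into $t\leq s$, thus retroactively licensing the use of Proposition~\ref{Gr:weight}; and (ii) $f\geq t(M) = \lceil\log_p(b(M)+1)\rceil$, which follows from the second hypothesis together with $s\leq \lfloor m/(p-2)\rfloor$... here one must be slightly careful and may instead get $f\geq t(M)$ directly or need the "rough" estimate of Proposition~\ref{Gr:weight-general} as a fallback when $s<t$.

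The main obstacle I anticipate is the bootstrapping issue: Proposition~\ref{Gr:weight} requires $s\geq t$ and $f\geq t(M)$ as hypotheses, but a priori we do not know $s\geq t$ — that is part of what Lemma~\ref{L:numerical} is designed to deliver, creating an apparent circularity. I expect the resolution is to run the argument in two regimes: if $s\geq t$, apply Proposition~\ref{Gr:weight} directly and derive a contradiction with $\langle\ga,\ta^\vee\rangle\geq p^{t-1}$; if $s<t$, one must instead use the cruder bound from Proposition~\ref{Gr:weight-general}, which gives $p^{s+f}\langle\ga,\ta^\vee\rangle \leq p^s b(M) + \langle\la,\ta^\vee\rangle + m p^{s+f}$, and then show this is incompatible with $p^{s+f}p^{t-1}\leq p^{s+f}\langle\ga,\ta^\vee\rangle$ — which after dividing through becomes exactly the hypothesis of Lemma~\ref{L:numerical}, forcing $s\geq t$, a contradiction with $s<t$. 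So the two cases merge: either way $s\geq t$ and the refined bound applies, completing the argument. The remaining routine checks are the manipulation of the $f\geq t(M)$ condition and verifying the precise constants match Lemma~\ref{L:numerical}'s hypotheses, which I would carry out but not belabor here.
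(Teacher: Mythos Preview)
Your proposal is correct and follows essentially the same route as the paper: the LHS spectral sequence for $G_{s+f}\trianglelefteq G$, the key inequality $\langle\la,\ta^\vee\rangle\leq\langle\ga,\ta^\vee\rangle$ (which the paper extracts cleanly from $\Ext_G^i(V(\la),L(\ga))\neq 0$ via \cite[II.6.20]{Jan}, equivalent to your $\opH^i(G,L(\ga)\otimes H^0(\la^\star))$ formulation), then the rough bound of Proposition~\ref{Gr:weight-general} fed into Lemma~\ref{L:numerical} to force $s\geq t$, and finally the refined Proposition~\ref{Gr:weight} for the contradiction. The only point you leave imprecise is the odd-prime case, where the paper splits into $s=m/(p-2)$ (so $f\geq t(M)+1$, using Lemma~\ref{L:numerical}(b)) versus $s>m/(p-2)$ (so $f\geq t(M)$, using Lemma~\ref{L:numerical}(c)); your ``$s\leq\lfloor m/(p-2)\rfloor$'' is a slip, but you correctly flag that something delicate happens there.
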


\begin{proof}  Assume that $\Ext_G^m(V(\la)^{(s+f)}, M^{(s)} \otimes H^0( \la  ) )\neq 0.$
We apply the LHS spectral sequence 
$$E^{i,j}_2= \Ext^i_{G/G_{s+f}}(V(\la)^{(s+f)}, \opH^j(G_{s+f}, M^{(s)} \otimes H^0( \la  ) ) \Rightarrow
\Ext_G^m(V(\la)^{(s+f)}, M^{(s)} \otimes H^0( \la  ) ).
$$ 
By assumption, there exist $i, j$ with $i + j = m$ such that 
$$\Ext^i_{G/G_{s+f}}(V(\la)^{(s+f)},\opH^j(G_{s+f},M^{(s)}\otimes H^0(\la)) \neq 0.$$   
Hence, there exists $\ga \in X(T)_+$ such that $L(\ga)$ is a composition factor of 
$\opH^j(G_{s + f},M^{(s)}\otimes H^0(\la))^{(-(s + f))}$ and $\Ext_{G}^i(V(\la),L(\ga)) \neq 0$.  By \cite[Prop. II.6.20]{Jan}, $\la \leq \ga$.  Therefore, 
\begin{equation}\label{weightinequality}
\langle\la,\ta^{\vee}\rangle \leq \langle\ga,\ta^{\vee}\rangle.
\end{equation}

The preceding discussion holds for all primes. For the remainder of the proof, we consider the two cases separately.   For part (a), by (\ref{Gr:weight-even-rough})  
along with (\ref{weightinequality}), we have
\begin{equation}\label{even-one}
2^{s+f}\langle\la,\ta^{\vee}\rangle \leq 2^{s+f}\langle\ga,\ta^{\vee}\rangle
\leq 2^sb(M) + \langle\la,\ta^{\vee}\rangle + m\cdot 2^{s + f}.
\end{equation}
Using our assumption that $s \geq m$  and the assumption on $f$ (which implies that $b(M) \leq 2^{f-1} - 1$), from (\ref{even-one}), we get
\begin{equation}\label{even-two}
(2^{s+f} - 1)\langle\la,\ta^{\vee}\rangle \leq 2^s(2^{f-1} - 1) + s \cdot 2^{s + f}.
\end{equation}
Set $t = \lceil\log_2(\langle\la,\ta^{\vee}\rangle + 1)\rceil$.  Then $2^{t-1} \leq \langle\la,\ta^{\vee}\rangle < 2^t$.  Substituting this into (\ref{even-two}), we get
$$
2^{t-1} \leq \frac{2^{s+f-1} - 2^s}{2^{s+f} - 1} + \frac{2^{s+f}}{2^{s + f} - 1}\cdot s.
$$
By Lemma \ref{L:numerical}, $s \geq t$.  We may now apply Proposition \ref{Gr:weight}.  Note that $f > t(M)$. From (\ref{Gr:weight-even}) and (\ref{weightinequality}) we obtain
$$
2^{t-1} \leq \langle\la,\ta^{\vee}\rangle < m - (s-t).
$$
Using $m \leq s $, we get
$
2^{t-1} < t,
$
which contradicts the fact that $x \leq 2^{x-1} $ for any integer $x$. 

For part (b), we distinguish two cases. \\\\
{\bf Case 1: } $s= m/(p-2)$ and $f \geq \ceil{\log_p (b(M)+1)}+1.$ 
\vskip .25cm 

By (\ref{Gr:weight-odd-rough}) along with (\ref{weightinequality}), we have
\begin{equation}\label{odd-one-=}
p^{s+f}\langle\la,\ta^{\vee}\rangle \leq p^{s+f}\langle\ga,\ta^{\vee}\rangle
\leq p^sb(M) + \langle\la,\ta^{\vee}\rangle + m\cdot p^{s + f}.
\end{equation}
Using our assumption that $s = \frac{m}{p-2}$  and the assumption on $f$ (which implies that $b(M) \leq p^{f-1} - 1$), from (\ref{odd-one-=}), we get
\begin{equation}\label{odd-two-=}
(p^{s+f} - 1)\langle\la,\ta^{\vee}\rangle \leq p^s(p^{f-1} - 1) + s(p-2) \cdot p^{s + f}.
\end{equation}
Set $t = \lceil\log_p(\langle\la,\ta^{\vee}\rangle + 1)\rceil$.  Then $p^{t-1} \leq \langle\la,\ta^{\vee}\rangle < p^t$.  Substituting this into (\ref{odd-two-=}), we get
$$
p^{t-1} \leq \langle\la,\ta^{\vee}\rangle \leq \frac{p^{s+f-1} - p^s}{p^{s+f} - 1} + \frac{p^{s+f}}{p^{s + f} - 1}\cdot s(p-2).
$$
By Lemma \ref{L:numerical}, $s \geq t$.  We may now apply Proposition \ref{Gr:weight}.  Since $f > t(M)$ one obtains from (\ref{Gr:weight-odd}) and (\ref{weightinequality})
$$
p^{t-1} \leq p^{t-1}\la_{t-1} \leq \langle\la,\ta^{\vee}\rangle < m - (s - t + 1)(p-2) + \la_{t-1}
$$
or
$$
p^{t-1} - 1 \leq (p^{t-1} - 1)\la_{t-1} < m - (s - t + 1)(p-2).
$$
Using $m = s(p-2)$, we get
$$
p^{t-1} -1 < s(p-2)  - (s - t + 1)(p-2) = (t-1)(p-2) 
$$
or
$$
p^{t-1} \leq (t-1)(p-2).
$$
This is a contradiction for $t = 1$, and, for $t > 1$, we conclude that $p^{t-1} < p(t-1)$
which contradicts the fact that $p^x \geq px$ for all integers $x$.
\\\\
{\bf Case 2: } $s> m/(p-2)$ and $f \geq \ceil{\log_p (b(M)+1)}.$ 
\vskip .25cm 

Using our assumption that $s> \frac{m}{p-2}$ (or $m \leq s(p-2) -1$) and the assumption on $f$ (which implies that $b(M) \leq p^f - 1$), from (\ref{odd-one-=}), we get
\begin{equation}\label{odd-two}
(p^{s+f} - 1)\langle\la,\ta^{\vee}\rangle \leq p^s(p^f - 1) + (s(p-2) - 1)\cdot p^{s + f}.
\end{equation}
Again set $t = \lceil\log_p(\langle\la,\ta^{\vee}\rangle + 1)\rceil$.    Substituting this into (\ref{odd-two}), we get
$$
p^{t-1} \leq \langle\la,\ta^{\vee}\rangle \leq \frac{p^{s+f} - p^s}{p^{s+f} - 1} + \frac{p^{s+f}}{p^{s + f} - 1}\cdot (s(p-2) - 1).
$$
By Lemma \ref{L:numerical}, $s\geq t$.  We may now apply Proposition \ref{Gr:weight}.  From (\ref{Gr:weight-odd}) and (\ref{weightinequality}) we obtain
$$
p^{t-1} \leq p^{t-1}\la_{t-1} \leq \langle\la,\ta^{\vee}\rangle \leq m - (s - t + 1)(p-2) + \la_{t-1}
$$
or
$$
p^{t-1} - 1 \leq (p^{t-1} - 1)\la_{t-1} \leq m - (s - t + 1)(p-2).
$$
Using $m \leq s(p-2) - 1$, we have
$$
p^{t-1} -1 \leq s(p-2) - 1 - (s - t + 1)(p-2) = (t-1)(p-2) - 1
$$
or
$$
p^{t-1} \leq (t-1)(p-2).
$$
As in Case 1, one obtains a contradiction.
\end{proof}


\section{Vanishing Ranges for $G({\mathbb F}_{q})$-Cohomology}

\subsection{} We apply  Proposition \ref{vanishing} to determine vanishing ranges of the cohomology ring $\operatorname{H}^{\bullet}(G({\mathbb F}_{q}),k)$. Let $M = k$ be the trivial module.  
Then $b(k) = 0.$  It follows that, for all $0 \neq \la \in X(T)_{+}$, 
$\Ext_{G}^m(V(\la)^{(r)},H^0(\la)) = 0$ for all $1 \leq m < r$ when $p = 2$ or $1 \leq m < r(p-2)$ when $p$ is odd.  In addition, when $\la = 0$, we have $\Ext_{G}^m(k,k) = 0$ for all $m > 0$.  

Now by \cite[Cor. 2.6(B)]{BNP8}, we obtain the following result which removes the condition $p > h$ in \cite[Thm. 4.4]{BNP8}.  This also provides a generalization of a theorem of Hiller (cf. \cite[Thm. 4]{H}).

\begin{theorem}\label{T:vanishingrange} Assume $r \geq 1$ and $q=p^{r}$.  
\begin{itemize}
\item[(a)] If $p=2$, then $\opH^m(G({\mathbb F}_{q}),k) = 0$ for $0 < m < r$.
\item[(b)] If $p > 2$, then $\opH^m(G({\mathbb F}_{q}),k) = 0$ for $0 < m < r(p-2)$.
\end{itemize}
\end{theorem}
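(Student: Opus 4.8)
The plan is to feed the vanishing statements assembled just before the theorem into the long exact sequence (\ref{eq:lesrestriction}) with $M=k$, using the result \cite[Cor. 2.6(B)]{BNP8} to control the left-hand and right-hand columns. Recall that for $M=k$ we have $b(k)=0$, so Proposition \ref{vanishing} (applied with $s=0$, $f=r$; one checks that the hypotheses $s\geq m/(p-2)$ etc. are vacuous or trivially met when $s=0$ and $m\geq 1$) gives $\Ext_G^m(V(\la)^{(r)},H^0(\la))=0$ for every nonzero $\la\in X(T)_+$ in the stated range ($1\leq m<r$ if $p=2$; $1\leq m<r(p-2)$ if $p$ odd). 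Together with the elementary fact $\Ext_G^m(k,k)=0$ for $m>0$, this covers the case $\la=0$ as well. The key point is that \cite[Cor. 2.6(B)]{BNP8} expresses (a bound on, or the vanishing of) $\opH^m(G(\mathbb F_q),k)$ in terms of exactly these $\Ext$-groups $\Ext_G^\bullet(V(\la)^{(r)},H^0(\la))$ ranging over dominant $\la$, via the filtration of the module $N$ in the third column of (\ref{eq:lesrestriction}) whose factors have the form $k\otimes H^0(\la)\otimes H^0(\la^\star)^{(r)}$.

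Concretely, I would first recall the statement of \cite[Cor. 2.6(B)]{BNP8}: it should say that $\opH^m(G(\mathbb F_q),M)$ vanishes (or injects into $\opH^m(G,M)$, which is $0$ here for $M=k$, $m>0$) provided $\Ext_G^{i}(V(\la)^{(r)}\otimes M', H^0(\la))=0$ for all relevant $\la$ and all $i\leq m$ (or $i\le m-1$, depending on the exact bookkeeping in that corollary). Second, I would verify that the numerical range in Proposition \ref{vanishing} with $M=k$ is precisely $\{1,\dots,r-1\}$ for $p=2$ and $\{1,\dots,r(p-2)-1\}$ for $p$ odd, and that this is uniform in $\la$, so the hypothesis of the corollary is met for all $m$ in the asserted range. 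Third, handle $\la=0$ separately using $\opH^\bullet(G,k)=0$ in positive degrees (so both the $G$-cohomology term feeding the corollary and the corresponding $\Ext$ vanish trivially). Assembling these, the corollary yields $\opH^m(G(\mathbb F_q),k)=0$ in the claimed range.

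The main obstacle I anticipate is bookkeeping rather than conceptual: making sure the degree shift in \cite[Cor. 2.6(B)]{BNP8} lines up correctly with the range coming from Proposition \ref{vanishing}, i.e.\ that we really need vanishing of $\Ext_G^m$ (and possibly $\Ext_G^{m-1}$) and not $\Ext_G^{m+1}$, since an off-by-one here shrinks the vanishing range. A secondary subtlety is the case $\la=0$: in \cite[Cor. 2.6(B)]{BNP8} the $\la=0$ filtration factor of $N$ is essentially $H^0(0)\otimes H^0(0)^{(r)}=k$, and one must note that the relevant $G$-cohomology contribution $\opH^{\bullet}(G,k)$ vanishes in positive degrees so it does not obstruct. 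Once these alignment issues are checked, the proof is immediate. I would also remark, as the paper does, that this argument makes no use of the hypothesis $p>h$ present in \cite[Thm. 4.4]{BNP8}, since Proposition \ref{vanishing} itself is valid for all $p$, and that specializing $r=1$, $p$ large recovers Hiller's theorem \cite[Thm. 4]{H}.
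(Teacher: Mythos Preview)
Your overall strategy is exactly the paper's: feed the vanishing of $\Ext_G^m(V(\la)^{(r)},H^0(\la))$ for $\la\neq 0$ (from Proposition~\ref{vanishing} with $M=k$), together with $\Ext_G^m(k,k)=0$ for $m>0$, into \cite[Cor.~2.6(B)]{BNP8}. The paper's proof is literally those two sentences.

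However, your application of Proposition~\ref{vanishing} contains a concrete error. You propose to take $s=0$, $f=r$ and assert that ``the hypotheses $s\geq m/(p-2)$ etc.\ are vacuous or trivially met when $s=0$ and $m\geq 1$.'' This is false: the hypothesis $s\geq m$ (for $p=2$) or $s\geq m/(p-2)$ (for $p$ odd) has nothing to do with $M$; it constrains $s$ against $m$, and with $s=0$, $m\geq 1$ it simply fails. The point you are missing is that when $M=k$ one has $M^{(s)}=k$ for \emph{every} $s$, so you are free to choose $s$ large provided $s+f=r$. For $p=2$ take $s=r-1$, $f=1$ (note $\lceil\log_2(b(k)+1)\rceil+1=1$); the hypothesis becomes $r-1\geq m$, i.e.\ $m<r$. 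For $p$ odd take any integer $s$ with $m/(p-2)\leq s\leq r$ and $f=r-s$; then $s+f=r\geq \lfloor m/(p-2)\rfloor+1$ holds precisely when $m<r(p-2)$. With this correction the argument is complete and identical to the paper's. Your worries about degree shifts and the $\la=0$ factor are handled by the cited corollary; no extra work is needed.
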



\subsection{Examples} We illustrate the sharpness of the bounds in Proposition \ref{vanishing} and Theorem \ref{T:vanishingrange} through the following examples.

\begin{example} Let $G = SL_2$ (and recall the weight notation given in Section \ref{SS:notation}).  For any $m$, the dimension of $\opH^m(G({\mathbb F}_{q}),k)$ can be computed from Theorem 2.6 of \cite{Car}. In particular, Theorem \ref{T:vanishingrange} follows from that result. Further, it follows that the bounds in Theorem \ref{T:vanishingrange} are sharp.   For $p = 2$, $\opH^r(\gfpr,k)$ is non-zero, and similarly, for $p > 2$, $\opH^{r(p-2)}(\gfpr,k)$ is non-zero.

Consider now Proposition \ref{vanishing} (still with $G = SL_2$).  Assume that $p = 2$.
Using the five term sequence from the LHS spectral sequence for $G_1 \unlhd G$, one can show that
\begin{align*}
\Ext^1_{G}(V(2)^{(1)},H^0(2)) &\cong \Hom_{G/G_1}(V(2)^{(1)},\opH^1(G_1,H^0(2)))\\
	&\cong \Hom_{G}(V(2),\opH^1(G_1,H^0(2))^{(-1)})\\
	&\cong \Hom_{G}(V(2), \ind_{B}^{G}\opH^1(B_1,2)^{(-1)})\\
	&\cong \Hom_{G}(V(2),H^0(2)) \neq 0.
\end{align*}
This corresponds to the case $s = 0$, $m = 1$, and $f = 1$ in Proposition \ref{vanishing} and shows that we must take $s$ greater than or equal to $m$ in general.

More generally, suppose that $s \geq 1$.  For any dominant weight $\la$, since $\opH^{\bullet}(G_{s+f},H^0(\la))^{(-1)}$ admits a good filtration, the LHS spectral sequence for $G_{s+f} \unlhd G$ collapses to give:
$$
E_2^{0,m} = \Hom_{G/G_{s+f}}(V(\la)^{(s)},\opH^m(G_{s+f},H^0(\lambda))) \cong 
\Ext_{G}^m(V(\la)^{(s+f)},H^0(\la)).
$$
However, by \cite[Theorem 4.3.1]{N}, $\opH^m(G_{s+f},H^0(\la))^{(-(s+f))} \cong \ind_{B}^{G} \opH^m(B_{s+f},\la)^{(-(s+f))}$, and  
\begin{eqnarray*}
\Ext_{G}^m(V(\la)^{(s+f)},H^0(\la))&\cong &\Hom_{G/G_{s+f}}\left(V(\la)^{(s+f)}, \ind_{B}^{G}\left(\opH^m(B_{s+f},\la)^{(-(s+f))}\right)^{(s+f)}\right)  \\
&\cong& \Hom_{G}\left(V(\la), \ind_{B}^{G} \opH^m(B_{s+f},\la)^{(-(s+f))}\right)\\
&\cong & \Hom_{B}\left(V(\la), \opH^{m}(B_{s+f},\la)^{(-(s+f))}\right).
\end{eqnarray*} 
Taking in particular,  $\la = 2^{s+f}\omega$, one sees using \cite[Theorem 4.2.3]{N} that $E_{2}^{m,0}\neq 0$ if and only if 
$2^{s+f}\omega$ is a weight of  $\opH^{m}(B_{s+f},k)^{(-(s+f))} \otimes \omega$, or equivalently, 
$(2^{s+f}-1)2^{s+f}\omega$ is a weight of $\opH^{m}(B_{s+f},k)$. 

A weight of $\opH^{m}(B_{s+f},k)$ is weight of $\opH^{m}(U_{s+f},k)$ which is $T_{s+f}$ equivariant. We want be able to express 
$(2^{s+f}-1)2^{s+f}=2a_{i_{1}}2^{i_{1}}+2a_{i_{2}}2^{i_{2}}+ \dots +2a_{i_{u}}2^{i_{u}}$ (or equivalently 
$(2^{s+f}-1)2^{s+f-1}=a_{i_{1}}2^{i_{1}}+a_{i_{2}}2^{i_{2}}+\dots + a_{i_{u}}2^{i_{u}}$) where $m=a_{i_{1}}+a_{i_{2}}+\dots + a_{i_{u}}$. 
Now one can take the $2$-adic expansion of $(2^{s+f}-1)2^{s+f-1}$ and see that it contains precisely $s+f$ nonzero places. 
Therefore, $\Ext_{G}^m(V(\la)^{(s+f)},H^0(\la))\neq 0$ when $m=s+f$, and again we see the sharpness of the condition.

Now assume that $p = 3$.  Since the module $H^0(1)^{(1)} \cong H^0(3)$ has a filtration with two simple composition factors (from top to bottom): $L(1)$ and $L(3)$,
$\Ext_{G}^1(V(1)^{(1)},H^0(1)) \neq 0$.  In the context of Proposition \ref{vanishing}, we have $m = 1$ and $s + f = 1$.  Again, we see that the conditions on $s$ and $s + f$ are sharp. Note further that $\opH^1(\gfp,k) \neq 0$ (as observed in the first part of this example), but $\opH^1(G,k) = 0$.
\end{example} 

\begin{example} Suppose that $\Phi$ has type $C_n$ and $p > 2n$.   It was shown in \cite[Lem. 5.1]{BNP8} that $\Ext_{G}^{s(p-2)}(V(\la)^{(s)},H^0(\la)) \neq 0$ for $\la = (p-2n)\omega_1$. In the context of Proposition \ref{vanishing}, this corresponds to 
$s = m/(p-2)$ and $s + f = s$ and shows the sharpness of the conditions.  It was further shown in \cite[Thm. 5.2]{BNP8} that $\opH^{r(p-2)}(\gfpr,k) \neq 0$ which demonstrates the sharpness of the bounds in Theorem \ref{T:vanishingrange} (for $p > 2n$).
\end{example}


\section{Generic Cohomology}\label{S:generic}

\subsection{Generic Cohomology: $m$ general}\label{SS:generic} In the following theorem, we prove that the restriction map $\text{res}:\text{H}^{m}(G,M^{(s)})\rightarrow \text{H}^{m}(G({\mathbb F}_{q}),M^{(s)})$ yields 
an isomorphism for sufficiently large $s$ and $r$. Effective lower bounds are provided in this result. 

\begin{theorem}\label{theorem:generic3}  Assume that $m \geq 0$ and let $q=p^{r}$. Let $M$ be a rational $G$-module, and consider the 
restriction map in cohomology: $\operatorname{res}:\operatorname{H}^{m}(G,M^{(s)})\rightarrow \operatorname{H}^{m}(G({\mathbb F}_{q}),M^{(s)})$.
\begin{itemize}
\item[(a)] If $p=2$, $e = m$,  and  $f =  \lceil\log_2(b(M)+1)\rceil$, then $\operatorname{res}$ is an isomorphism and 
$$\opH^m(G({\mathbb F}_{q}), M) \cong \opH^m(G({\mathbb F}_{q}), M^{(s)}) \cong \opH^m(G, M^{(s)}) \text{ for all } s\geq e \text{ and } r \geq e+f+1.$$
\item[(b)] If $p$ is odd, $e = m/(p-2)$, and  $f =  \lceil\log_p (b(M)+1)\rceil$, then $\operatorname{res}$ is an isomorphism and 
$$\opH^m(G({\mathbb F}_{q}), M) \cong \opH^m(G({\mathbb F}_{q}), M^{(s)}) \cong \opH^m(G, M^{(s)}) \text{ for all } s\geq e \text{ and } r \geq \lfloor e \rfloor +f+1.$$
\end{itemize}
\end{theorem}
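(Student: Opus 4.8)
The plan is to feed the long exact sequence (\ref{eq:lesrestriction}) into the picture and use Proposition \ref{vanishing} to kill the obstruction terms in its right-hand column. Recall that for a rational $G$-module $V$, the third column of (\ref{eq:lesrestriction}) is $\opH^{\bullet}(G,N)$ where $N$ has a $G$-filtration with factors of the form $V\otimes H^0(\nu)\otimes H^0(\nu^{\star})^{(r)}$ with $\nu\in X(T)_+$. Apply this with $V=M^{(s)}$. Then each filtration factor of $N$ looks like $M^{(s)}\otimes H^0(\nu)\otimes H^0(\nu^{\star})^{(r)}$, and $\opH^m(G,-)$ applied to such a factor is $\Ext^m_G(V(\nu^{\star})^{(r)}, M^{(s)}\otimes H^0(\nu))$ after dualizing the last tensor factor (using $H^0(\nu^{\star})^{(r),*}\cong V(\nu^{\star})^{(r)}$ and $\Hom$-$\otimes$ adjunction). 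The point is that this is precisely the $\Ext$-group that Proposition \ref{vanishing} shows vanishes, provided the numerical hypotheses there are met; and our choices $e=m$ (resp.\ $m/(p-2)$), $f=\lceil\log_p(b(M)+1)\rceil$, $s\geq e$, $r\geq e+f+1$ (resp.\ $r\geq\lfloor e\rfloor+f+1$) are exactly engineered so that, writing $r=s'+f'$ in the form demanded by Proposition \ref{vanishing}, the hypotheses $s'\geq m$ (resp.\ $\geq m/(p-2)$) and $s'+f'\geq \lfloor m/(p-2)\rfloor+\lceil\log_p(b(M)+1)\rceil+1$ hold. One must handle the $\nu=0$ factor separately: there the factor is just $M^{(s)}$, $H^0(0)=k$, and $\Ext^m_G(k^{(r)},M^{(s)})=\opH^m(G,M^{(s)})$ does not vanish — but this is the one factor producing the connecting term that one wants, not an obstruction; more precisely one checks that the $\nu=0$ contribution to $N$ is zero (the natural map $M\to\ind M$ has cokernel with no trivial-twist factor) so in fact every factor of $N$ has $\nu\neq 0$. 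Granting $\opH^m(G,N)=\opH^{m-1}(G,N)=0$, the long exact sequence gives $\res:\opH^m(G,M^{(s)})\xrightarrow{\sim}\opH^m(G(\mathbb F_q),M^{(s)})$.

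After that, I would produce the remaining two isomorphisms. The middle one, $\opH^m(G(\mathbb F_q),M^{(s)})\cong\opH^m(G(\mathbb F_q),M)$, is standard: $G(\mathbb F_q)$ does not see the Frobenius twist since $F^r$ acts as the identity on $G(\mathbb F_q)$, hence $M^{(s)}$ and $M$ are isomorphic as $G(\mathbb F_q)$-modules whenever $r\mid s$; for general $s$ one notes $M^{(s)}|_{G(\mathbb F_q)}\cong M^{(s\bmod r)}|_{G(\mathbb F_q)}$ and then iterates the rational stability of Section~\ref{rat} — actually the cleaner route is: $\res$ is an isomorphism for the chosen parameters, and the same argument applied with $s$ replaced by $s+1$ (the hypotheses are monotone in $s$) shows $\res$ is an isomorphism there too, so $\opH^m(G,M^{(s)})\cong\opH^m(G,M^{(s+1)})$ by Theorem~\ref{theorem:generic1}; chaining these gives independence of $s$ once $s\geq e$, and in particular the value agrees with what one gets from any $s$ divisible by $r$, where the $G(\mathbb F_q)$-module is literally $M$. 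That yields all three isomorphisms.

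The main obstacle I anticipate is the bookkeeping in the previous paragraph: one must verify that the numerical parameters in the statement genuinely imply the hypotheses of Proposition~\ref{vanishing} for \emph{both} degree $m$ and degree $m-1$ (the long exact sequence needs vanishing in two consecutive degrees), and for every filtration factor of $N$ — so the implicit weight $\nu^{\star}$ plays the role of ``$\la$'' in Proposition~\ref{vanishing} and ranges over all dominant weights, which is fine because Proposition~\ref{vanishing} holds for arbitrary nonzero dominant $\la$. One subtlety: Proposition~\ref{vanishing} is phrased with $V(\la)^{(s+f)}$ and coefficients $M^{(s)}\otimes H^0(\la)$, i.e.\ the twist on the Weyl module is $s+f=r$, while the coefficient twist is $s$; matching this to the factor $M^{(s)}\otimes H^0(\nu)\otimes H^0(\nu^{\star})^{(r)}$ forces ``$s$'' of the proposition to be our $s$ and ``$s+f$'' of the proposition to be our $r$, so ``$f$'' of the proposition is $r-s$, and one needs $r-s\geq$ the stated lower bound. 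Because $s$ can be as small as $e$, the worst case is $r-s=r-e$, and the hypothesis $r\geq e+f+1$ (resp.\ $\lfloor e\rfloor+f+1$) is exactly what makes $r-s\geq f+1$ (resp.\ the floor version), matching Proposition~\ref{vanishing}(a) (resp.\ the two cases of (b)). The degree-$(m-1)$ check is easier since decreasing $m$ only weakens the requirements. I would also remark that this makes the proof independent of the Main Theorem of \cite{CPSvdK}, as advertised, since Proposition~\ref{vanishing} was proved purely via Frobenius-kernel spectral sequences and the combinatorial Lemma~\ref{L:numerical}.
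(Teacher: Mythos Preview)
Your main argument---feeding Proposition~\ref{vanishing} into the long exact sequence (\ref{eq:lesrestriction}) to kill the obstruction terms $\opH^{m-1}(G,N\otimes M^{(s)})$ and $\opH^m(G,N\otimes M^{(s)})$, using that every filtration factor of $N$ has $\nu\neq 0$---is exactly the paper's proof, and your identification of Proposition~\ref{vanishing}'s ``$s+f$'' with the theorem's $r$ is correct.

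The one place you go astray is the first isomorphism $\opH^m(G(\mathbb F_q),M)\cong\opH^m(G(\mathbb F_q),M^{(s)})$. You argue that $M^{(s)}\cong M$ as $G(\mathbb F_q)$-modules only when $r\mid s$, and then try to reach general $s$ by chaining Theorem~\ref{theorem:generic1}; but Theorem~\ref{theorem:generic1} requires $s\geq C$ with $C$ strictly larger than $e$, so your chain does not descend to $s=e$, and the argument as written is circular (you need $\opH^m(G(\mathbb F_q),M^{(s)})\cong\opH^m(G(\mathbb F_q),M^{(s+1)})$ to connect the two $\res$ isomorphisms, which is what you are proving). The paper's fix is a one-liner: the Frobenius $F$ restricts to a group \emph{automorphism} of $G(\mathbb F_q)$, and any automorphism $\phi$ of a group $H$ induces $\opH^m(H,M)\cong\opH^m(H,M^{\phi})$ on cohomology even when $M\not\cong M^{\phi}$ as modules. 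This yields $\opH^m(G(\mathbb F_q),M)\cong\opH^m(G(\mathbb F_q),M^{(s)})$ for \emph{all} $s\geq 0$ with no further work.

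One smaller remark on your parameter bookkeeping: you write ``the worst case is $r-s=r-e$'', but for Proposition~\ref{vanishing}(a) you need $r-s$ \emph{large}, so the problematic direction is $s$ large, not $s=e$. For $p$ odd this is harmless because Proposition~\ref{vanishing}(b) places its hypothesis on $s+f=r$ rather than on $f$, so the theorem's $r\geq\lfloor e\rfloor+f+1$ suffices for every $s\geq e$. For $p=2$ the direct application of Proposition~\ref{vanishing}(a) literally gives vanishing only when $r-s\geq f+1$; once the first isomorphism is in hand for all $s$ (as above), the remaining values of $s$ follow from the $s=e$ case.
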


\begin{proof}The first isomorphism follows from the fact that the Frobenius morphism is an automorphism on the finite group. We need to verify the second isomorphism which 
involves the restriction map. Define $N$ via the exact sequence 
$0 \rightarrow k \rightarrow \text{ind}_{G({\mathbb F}_{q})}^{G} k \rightarrow N \to 0$. By tensoring with $M^{(s)}$ and using the tensor identity, we have a short exact sequence 
$$0\rightarrow M^{(s)} \rightarrow \text{ind}_{G({\mathbb F}_{q})}^{G} M^{(s)} \rightarrow N\otimes M^{(s)} \rightarrow 0.$$
The long exact sequence of cohomology (cf. (\ref{eq:lesrestriction})) yields an exact sequence
$$\cdots \rightarrow  \opH^{m-1}(G, N\otimes M^{(s)}) \to \opH^m(G, M^{(s)}) \stackrel{\res}{\longrightarrow}  \opH^m(G({\mathbb F}_{q}), M^{(s)}) \rightarrow \opH^{m}(G, N\otimes M^{(s)})\rightarrow\cdots .$$
It follows from \cite[Proposition 2.4.1]{BNP8}  that $N$ has a filtration with sections of the form $H^0(\mu) \otimes H^0(\mu^*)^{(r)}$ with $\mu \in X(T)_+$ and $\mu \neq 0$. It suffices therefore to show that 
\begin{equation*}\label{vanish}
\opH^m(G, M^{(s)}\otimes H^0(\mu) \otimes H^0(\mu^*)^{(r)} ) \cong
\Ext^m_G(V(\mu)^{(r)}, M^{(s)}\otimes H^0(\mu))   
\end{equation*}
vanishes, whenever $\mu \neq 0$, $s\geq e$, and $r \geq e+f+1$ when $p=2$, $r \geq \lfloor e \rfloor +f+1$ when $p$  is odd. 
This fact follows immediately from Proposition~\ref{vanishing}.
\end{proof}

\begin{remark}\label{R:genericproof}
We can give a new proof of Theorem \ref{theorem:generic2} without the assumption that $\operatorname{H}^{n}(G_{1},k)^{(-1)}$ has a good filtration for $n \leq m$. 
Assume that $s \geq m$ when $p=2$ and $s \geq m/(p-2)$ when $p$ is odd. In addition, assume that $r >> s+ b(M)$.  Theorem \ref{theorem:generic3} now gives rise 
to the following isomorphisms.
$$\opH^m(G, M^{(s)}) \cong \opH^m(G({\mathbb F}_{q}), M^{(s)}) \cong \opH^m(G({\mathbb F}_{q}), M) \cong \opH^m(G({\mathbb F}_{q}), M^{(s+1)}) \cong \opH^m(G, M^{(s+1)}).$$
\end{remark}


\subsection{Generic Cohomology: $m=1$, $p$ odd} 

In the case when $p$ is odd and $m=1$ of Theorem~\ref{theorem:generic3}, one has $e=m/(p-2)>0$.  The next result shows that in this case 
we can lower the bound to $e=0$.

\begin{theorem}\label{theorem:generic4}  Assume that $m=1$ and let $q=p^{r}$ with $p$-odd. Let $M$ be a rational $G$-module, and consider the restriction map in 
cohomology: $\operatorname{res}:\operatorname{H}^{m}(G,M^{(s)})\rightarrow \operatorname{H}^{m}(G({\mathbb F}_{q}),M^{(s)})$. If $e = 0$, and  $f = \lceil\log_p (b(M)+1)\rceil$, 
then $\operatorname{res}$ is an isomorphism and 
$$\opH^m(G({\mathbb F}_{q}), M) \cong \opH^m(G({\mathbb F}_{q}), M^{(s)}) \cong \opH^m(G, M^{(s)}) \text{ for all } s\geq e \text{ and } r \geq f+1$$
when any of the following conditions hold: 
\begin{itemize} 
\item[(a)] $\Phi$ is not of type $A_{1}$; 
\item[(b)] $\Phi$ is of type $A_{1}$ and $p\geq 5$; 
\item[(c)] $\Phi$ is of type $A_{1}$, $p=3$, and $r\geq 2$.  
\end{itemize} 
\end{theorem}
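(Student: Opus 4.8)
The plan is to follow the same strategy as in the proof of Theorem \ref{theorem:generic3}, analyzing the long exact sequence (\ref{eq:lesrestriction}) with $M$ replaced by $M^{(s)}$, and to show vanishing of the relevant terms in the right-hand (third) column. As before, one uses the short exact sequence $0 \to M^{(s)} \to \ind_{G(\mathbb{F}_q)}^G M^{(s)} \to N \otimes M^{(s)} \to 0$, where (by \cite[Proposition 2.4.1]{BNP8}) $N$ has a filtration with sections $H^0(\mu) \otimes H^0(\mu^*)^{(r)}$, $\mu \in X(T)_+$, $\mu \neq 0$. The piece of the long exact sequence around degree $m = 1$ reads
$$
\opH^0(G, N\otimes M^{(s)}) \to \opH^1(G,M^{(s)}) \stackrel{\res}{\to} \opH^1(G(\mathbb{F}_q), M^{(s)}) \to \opH^1(G, N\otimes M^{(s)}) \to \cdots,
$$
so it suffices to show $\opH^0(G, M^{(s)}\otimes H^0(\mu)\otimes H^0(\mu^*)^{(r)}) = 0$ and $\opH^1(G, M^{(s)}\otimes H^0(\mu)\otimes H^0(\mu^*)^{(r)}) = 0$, i.e. $\Ext^i_G(V(\mu)^{(r)}, M^{(s)}\otimes H^0(\mu)) = 0$ for $i = 0, 1$, whenever $\mu \neq 0$, $s \geq 0$, and $r \geq f+1$. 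The $i = 0$ vanishing is elementary: a nonzero $G$-homomorphism $V(\mu)^{(r)} \to M^{(s)}\otimes H^0(\mu)$ would force $p^r\mu$ to be a weight of $M^{(s)}\otimes H^0(\mu)$, hence $p^r\langle\mu,\ta^\vee\rangle \leq p^s b(M) + \langle\mu,\ta^\vee\rangle$, which fails once $r \geq f+1 > s$ is large relative to $b(M)$ (this is essentially the weight estimate behind Proposition \ref{vanishing}, the case $m=0$). So the real content is the vanishing of $\Ext^1_G(V(\mu)^{(r)}, M^{(s)}\otimes H^0(\mu))$ when $e = 0$, i.e. without the hypothesis $r \geq \lfloor m/(p-2)\rfloor + f + 1$ of Proposition \ref{vanishing} but only $r \geq f+1$.

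For the $\Ext^1$ vanishing I would run the LHS spectral sequence for $G_r \unlhd G$,
$$
E_2^{i,j} = \Ext^i_{G/G_r}(V(\mu)^{(r)}, \opH^j(G_r, M^{(s)}\otimes H^0(\mu))) \Rightarrow \Ext^{i+j}_G(V(\mu)^{(r)}, M^{(s)}\otimes H^0(\mu)),
$$
and show $E_2^{1,0} = E_2^{0,1} = 0$. For $E_2^{1,0}$: a composition factor $L(\ga)$ of $\opH^0(G_r, M^{(s)}\otimes H^0(\mu))^{(-r)}$ with $\Ext^1_G(V(\mu), L(\ga)) \neq 0$ forces $\mu \leq \ga$ (by \cite[Prop. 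II.6.20]{Jan}, or rather $\mu < \ga$ in the $\Ext^1$ case since $\Ext^1_G(V(\mu),L(\mu))=0$), hence $\langle\mu,\ta^\vee\rangle < \langle\ga,\ta^\vee\rangle$; combined with the rough weight bound (\ref{Gr:weight-odd-rough}) applied with $m = 0$, namely $p^r\langle\ga,\ta^\vee\rangle \leq p^s b(M) + \langle\mu,\ta^\vee\rangle$, and $b(M) \leq p^{f-1}-1 < p^r/p$, one derives $(p^r - 1)\langle\mu,\ta^\vee\rangle < p^{r-1}$, which is impossible. For $E_2^{0,1} = \Hom_{G/G_r}(V(\mu)^{(r)}, \opH^1(G_r, M^{(s)}\otimes H^0(\mu)))$: a composition factor $L(\ga)$ of $\opH^1(G_r, M^{(s)}\otimes H^0(\mu))^{(-r)}$ with $\mu \leq \ga$ — and here I expect to invoke that for $p$ odd and $\Phi$ not of type $A_1$, certain $\opH^1(G_r, -)^{(-r)}$ pieces are better behaved, together with the refined weight estimate (\ref{Gr:weight-odd}) with $m = 1$: this gives $\langle\ga,\ta^\vee\rangle \leq \min\{1 - (r - t + 1)(p-2) + \mu_{t-1}, 1 - (r-t)(p-2)\}$, and for $r$ in the allowed range (so $r \geq t$ by Lemma \ref{L:numerical}) the right-hand side is negative, contradicting $0 < \langle\mu,\ta^\vee\rangle \leq \langle\ga,\ta^\vee\rangle$.

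The main obstacle, and the reason for the case distinction (a)/(b)/(c), is that the naive argument fails exactly when $p - 2$ is small and low-rank phenomena intervene. When $\Phi \neq A_1$ there are additional simple roots and the weight $\ta$ has $\langle\ta,\ta^\vee\rangle = 2$ with other roots contributing, so the combinatorial slack in (\ref{Gr:weight-odd}) is enough; I would need a short argument (perhaps via restriction to a rank-$2$ Levi, or a direct check that $\opH^1(G_1,k) = 0$ for $p$ odd forces the $j=1$ layer to start at a weight large enough) to kill the $E_2^{0,1}$ term. For type $A_1$ the bound is tight and one genuinely needs $p \geq 5$ (so $p - 2 \geq 3$) to make $1 - (r-t)(p-2) < 0$ once $r \geq t \geq 1$; for $p = 3$ the estimate only gives $\langle\ga,\ta^\vee\rangle \leq 1 - (r-t) + \mu_{t-1}$, which can be nonnegative when $r = t$, so one must additionally assume $r \geq 2$ and then argue separately — likely by the explicit $SL_2$ computations (e.g. via \cite{Car} or the $B_s$-cohomology weight analysis of Section \ref{SS:weight} specialized to $A_1$) that the offending weight does not actually occur, or occurs only for $\mu$ that cannot satisfy $\mu \leq \ga$. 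Handling this $A_1$, $p = 3$, $r \geq 2$ corner carefully is where the work concentrates; everything else is a routine rerun of the machinery already assembled in Sections 4–6.
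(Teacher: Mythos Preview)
Your reduction via the long exact sequence and the filtration of $N$ to the vanishing of $\Ext^i_G(V(\mu)^{(r)},M^{(s)}\otimes H^0(\mu))$ for $i=0,1$ is correct and matches the paper. Your Hom ($i=0$) argument is fine in spirit. The real gap is in your treatment of $E_2^{0,1}$. You propose to apply the refined bound (\ref{Gr:weight-odd}) from Proposition~\ref{Gr:weight} to $\opH^1(G_r,M^{(s)}\otimes H^0(\mu))^{(-r)}$, but that proposition requires the Frobenius twist on the $M$-factor (there called ``$s$'') to be at least $t(\mu)$. In the present theorem $e=0$, so the actual twist $s$ on $M$ can be $0$, whereas $t(\mu)\geq 1$ for every nonzero $\mu$; thus the hypothesis of Proposition~\ref{Gr:weight} is never satisfied and you cannot invoke (\ref{Gr:weight-odd}). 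Your suggestion that ``$r\geq t$ by Lemma~\ref{L:numerical}'' conflates the parameter $r$ with the twist parameter: the proof of Proposition~\ref{vanishing} uses $r=s+f$ with the \emph{twist} $s\geq m/(p-2)$, and it is exactly this lower bound on the twist that you are trying to remove.

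The paper's argument is genuinely different from what you sketch. It first reduces to $M=L(\sigma)$ simple (so $\sigma\in X_{r-1}(T)$ by the bound on $r$), and then, rather than appealing to weight estimates on $G_r$-cohomology, it uses the explicit structure of $\Ext^1_{B_r}(L(\sigma),\mu_0)$: writing $\mu=\mu_0+p^r\mu_1$ with $\mu_0\in X_r(T)$, one realizes $\Ext^1_{B_r}(L(\sigma),\mu_0)\cong\Hom_{B_r}(L(\sigma),Q)$ where $Q=I_r(\mu_0)/\mu_0$, and the crucial input is that the $T_r$-socle weights of $Q$ are precisely $\mu_0+p^\ell\alpha$ with $\alpha\in\Pi$ simple and $0\leq\ell\leq r-1$. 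A nonzero $E_2^{0,1}$ then forces an equation $(p^r-1)\mu=p^\ell\alpha-\delta$ (for some weight $\delta$ of $L(\sigma)$), and taking inner products with $\ta^\vee$ yields $p\leq\langle\alpha,\ta^\vee\rangle+1$. The point that distinguishes (a) from the $A_1$ cases is that $\langle\alpha,\ta^\vee\rangle\leq 1$ for every \emph{simple} root $\alpha$ when $\Phi\neq A_1$, giving $p\leq 2$, a contradiction for odd $p$; in type $A_1$ one has $\langle\alpha,\ta^\vee\rangle=2$, so only $p=3$ survives, and then equality forces $\mu=1$, $\sigma=3^{r-1}-1$, which is handled by a direct inductive computation of $\Ext^1_{B_r}(L(\sigma),\mu)$ for $r\geq 2$. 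None of your suggested mechanisms (rank-$2$ Levi restriction, $\opH^1(G_1,k)=0$, or slack in (\ref{Gr:weight-odd})) capture this; the argument hinges on the $B_r$-injective-hull socle description and the elementary root-system fact about $\langle\alpha,\ta^\vee\rangle$ for simple $\alpha$.
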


\begin{proof} We first begin by applying the long exact sequence of cohomology (cf. (\ref{eq:lesrestriction})) as in Theorem~\ref{theorem:generic3}. In order to prove the 
assertion, it suffices to show that, for $M$ an arbitrary finite-dimensional rational $G$-module,
\begin{equation}\label{eq:hom} 
\text{Hom}_{G}(V(\mu)^{ ( r )},M\otimes H^{0}(\mu))=0
\end{equation} 
\begin{equation}\label{eq:ext1} 
\text{Ext}^{1}_{G}(V(\mu)^{ ( r )},M\otimes H^{0}(\mu))=0
\end{equation} 
with $\mu\neq 0$. Without a loss of generality we may assume that $M=L(\sigma)$ where $\sigma\in X(T)_{+}$. Under our assumptions, 
$r \geq  \lceil\log_p (b(L(\sigma))+1)\rceil+1$ which implies that $p^{r}\geq p\cdot (\langle \sigma, \tilde{\alpha}^{\vee} \rangle+1)$ or $p^{r-1}-1\geq \langle \sigma, \tilde{\alpha}^{\vee} \rangle$. 
Therefore, $\sigma\in X_{r-1}(T)$. 

Now consider the LHS spectral sequence: 
\begin{equation*} 
E_{2}^{i,j}=\text{Ext}^{i}_{G/G_{r}}(V(\mu)^{( r)}, \text{Ext}^{j}_{G_{r}}(L(\sigma),H^{0}(\mu)))\Rightarrow 
\text{Ext}^{i+j}_{G}(V(\mu)^{( r)},L(\sigma)\otimes H^{0}(\mu)). 
\end{equation*} 
First observe that for $j=0,1$, $\text{Ext}^{j}_{G_{r}}(L(\sigma),H^{0}(\mu))\cong \text{ind}_{B/B_{r}}^{G/G_{r}}\ \text{Ext}^{j}(L(\sigma),\mu)$. 
Write $\mu=\mu_{0}+p^{r}\mu_{1}$ where $\mu_{0}\in X_{r}(T)$ and $\mu_{1}\in X(T)_{+}$. Then,  
for $j=0$,
\begin{equation*}
\text{Hom}_{G_{r}}(L(\sigma),H^{0}(\mu))\cong 
\begin{cases} 
\text{ind}_{B/B_{r}}^{G/G_{r}} p^{r}\mu_{1} & \text{$\sigma=\mu_{0}$} \\
0  & \text{$\sigma\neq \mu_{0}$}.  
\end{cases} 
\end{equation*}  

Therefore, $\text{Hom}_{G}(V(\mu)^{ ( r )},L(\sigma) \otimes H^{0}(\mu))\cong 
\text{Hom}_{G}(V(\mu),\text{ind}_{B}^{G} \mu_{1})$.  This is non-zero if and only if $\mu=\mu_{1}$ (i.e., $\mu_{0}+(p^{r}-1)\mu_{1}=0$) or 
equivalently when $\mu=0$.  Consequently, (\ref{eq:hom}) holds. 

The argument above shows that $\text{Hom}_{G_{r}}(L(\sigma),H^{0}(\mu))$ has a good filtration, thus 
$E_{2}^{i,0}=0$ for $i>0$, and so $\text{Ext}^{1}_{G}(V(\mu)^{( r)},L(\sigma)\otimes H^{0}(\mu))\cong E_{2}^{0,1}$. Hence, we are reduced to showing that 
\begin{equation*} 
\text{Hom}_{G/G_{r}}(V(\mu)^{( r)}, \text{Ext}^{1}_{G_{r}}(L(\sigma),H^{0}(\mu)))=0.
\end{equation*} 
From our analysis, we have 
\begin{equation*}
\text{Hom}_{G/G_{r}}(V(\mu)^{( r)}, \text{Ext}^{1}_{G_{r}}(L(\sigma),H^{0}(\mu)))\cong 
\text{Hom}_{G}(V(\mu),\text{ind}_{B}^{G}(\text{Ext}^{1}_{B_{r}}(L(\sigma),\mu_{0})^{(-r)}\otimes \mu_{1})).
\end{equation*}

Next we need to analyze $\text{Ext}^{1}_{B_{r}}(L(\sigma),\mu_{0})$. By using the short exact sequence 
$0\rightarrow \mu_{0}\rightarrow I_{r}(\mu_{0}) \rightarrow Q \rightarrow 0$ where $I_{r}(\mu_{0})$ is the 
injective hull of $\mu_{0}$ in the category of $B_{r}$-modules, and applying the long exact sequence in cohomology, one sees that 
$\text{Ext}^{1}_{B_{r}}(L(\sigma),\mu_{0})\cong \text{Hom}_{B_{r}}(L(\sigma),Q)$. 
The $T_{r}$ weights in the socle of $Q$ are given by $\mu_{0}+p^{l}\alpha$ where $l=0,1,\dots, r-1$ and $\alpha\in \Pi$ (cf. \cite[Section 2]{BNP4}). 
Consequently, if $\text{Hom}_{B_{r}}(L(\sigma),Q)\neq 0$, then there exists a weight $\delta$ of $L(\sigma)$ such that 
$$\mu_{0}+p^{l}\alpha-\delta=p^{r}\theta$$ 
for some weight $\theta$. Furthermore, we can conclude that if 
$\text{Hom}_{G}(V(\mu), \text{ind}_{B}^{G}( \text{Ext}^{1}_{B_{r}}(L(\sigma),\mu_{0})^{(-r)}\otimes \mu_{1}))\neq 0$ 
then $\mu=\theta+\mu_{1}$ where $\mu_{0}+p^{l}\alpha-\delta=p^{r}\theta$ for some $l=0,1,\dots, r-1$ and $\alpha\in \Pi$. 
From these two conditions, one can deduce that 
$$(p^{r}-1)\mu=p^{l}\alpha-\delta.$$
Since $0 \neq \mu\in X(T)_{+}$, we have 
\begin{equation}\label{eq:muinequality}
(p^{r}-1) \leq (p^{r}-1)\langle \mu,\tilde{\alpha}^{\vee} \rangle. 
\end{equation} 
Furthermore, 
\begin{equation}\label{eq:muinequality2}
(p^{r}-1)\langle \mu,\tilde{\alpha}^{\vee} \rangle \leq  p^{l}\langle \alpha,\tilde{\alpha}^{\vee} \rangle + \langle -\delta,\tilde{\alpha}^{\vee} \rangle 
 \leq  p^{l}\langle \alpha,\tilde{\alpha}^{\vee} \rangle + (p^{r-1}-1) 
\leq  p^{r-1}(\langle \alpha, \tilde{\alpha}^{\vee} \rangle +1)-1.
\end{equation} 
Combining (\ref{eq:muinequality}) and (\ref{eq:muinequality2}), it follows that 
\begin{equation} \label{eq:maininequality}
p^{r}\leq p^{r-1}(\langle \alpha, \tilde{\alpha}^{\vee} \rangle +1).
\end{equation}

By assumption, $p\geq 3$, and $\langle \alpha, \tilde{\alpha}^{\vee} \rangle \leq 1$ unless $\Phi$ is of type $A_{1}$ (in which case $\langle \alpha, \tilde{\alpha}^{\vee} \rangle=2$, since $\al = \ta$ is the unique simple root). 
One can conclude that we reach a contradiction in all cases except when $\Phi$ is of type $A_{1}$ and $p=3$. This verifies the theorem under conditions (a) and (b). 

In the exceptional case of type $A_{1}$ with $p = 3$, one can garner more information by using the equations above.   The exception occurs only when equality holds in (\ref{eq:maininequality}).   However, for that to occur, one must have $\mu= 1$  and $\sigma= 3^{r-1}-1$. Observe that 
$L(\sigma) \cong L(2)^{(r-1)}\otimes L(2)^{(r-2)} \otimes \dots \otimes L(2)$ for $r\geq 2$. One can show inductively using the LHS spectral sequence (with $B_{r-1}\unlhd B_{r}$) that 
$\text{Ext}^{1}_{B_{r}}(L(\sigma),\mu)=0$ for $r\geq 2$. This verifies (\ref{eq:ext1}), and thus the theorem also holds under condition (c). 
 \end{proof} 

\begin{remark} An alternative approach to this theorem can be provided for $p>3$ by using results in Section~\ref{rat}. By Theorem~\ref{theorem:generic3} 
we have that $\operatorname{res} : \opH^1(G,M^{(s)}) \to \opH^1(\gfpr,M^{(s)})$ is an isomorphism for $s\geq 1$ and $r\geq \lfloor 1/(p-2) \rfloor +f+1 = f + 1$. But by 
Corollary~\ref{corollary:generic3}, one has $\text{H}^{1}(G,M^{(1)})\cong \text{H}^{1}(G,M)$. 
\end{remark} 

\begin{remark} 
We remark that in the case when $\Phi$ is of type $A_{1}$, $p=3$, $r=1$ and $M=k$ the restriction map 
$\operatorname{res}:\operatorname{H}^{1}(G,k)\rightarrow \operatorname{H}^{1}(G({\mathbb F}_{p}),k)$
is not an isomorphism because $\operatorname{H}^{1}(G,k)=0$ while $\operatorname{H}^{1}(G({\mathbb F}_{p}),k)\cong k$. 
This is consistent with the fact that $\text{Ext}^{1}_{G_{1}}(L(0),L(1))\cong L(1)^{(1)}$ which shows that 
$\text{Hom}_{G/G_{1}}(V(1)^{( 1)}, \text{Ext}^{1}_{G_{1}}(L(0),H^{0}(1)))\cong k$.  
\end{remark}


\subsection{Generic Cohomology: Type $A_{1}$, $p$ odd}

The next result demonstrates that in the case when $\Phi$ is of type $A_{1}$ we can lower the bound to $e= \lceil(m-1)/(p-2)\rceil$. 

\begin{theorem}\label{theorem:generic5}  Assume that $\Phi$ is of type $A_{1}$ with $p$-odd. Let $M$ be a rational $G$-module, and consider the restriction map in 
cohomology: $\operatorname{res}:\operatorname{H}^{m}(G,M^{(s)})\rightarrow \operatorname{H}^{m}(G({\mathbb F}_{q}),M^{(s)})$. If $e = \lceil (m-1)/(p-2) \rceil$, and  $f = \lceil\log_p (b(M)+1)\rceil$, 
then $\operatorname{res}$ is an isomorphism and 
$$\opH^m(G({\mathbb F}_{q}), M) \cong \opH^m(G({\mathbb F}_{q}), M^{(s)}) \cong \opH^m(G, M^{(s)}).$$
when any of the following conditions holds: 
\begin{itemize} 
\item[(a)] $p\geq 5$, $s\geq e$ and $r\geq e + f+1$;  
\item[(b)] $p=3$,  $s\geq m-1$ and $r\geq m+1+ \lfloor \log_3 (b(M)+1) \rfloor$.    
\end{itemize} 
\end{theorem}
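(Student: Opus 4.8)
The plan is to mirror the strategy of Theorems \ref{theorem:generic3} and \ref{theorem:generic4}: use the exact sequence $0 \to k \to \ind_{G({\mathbb F}_{q})}^{G}k \to N \to 0$, tensor with $M^{(s)}$, and appeal to the long exact sequence (\ref{eq:lesrestriction}). Since $N$ has a filtration with sections $H^0(\mu)\otimes H^0(\mu^\star)^{(r)}$ with $\mu\in X(T)_+$, $\mu\neq 0$ (using \cite[Proposition 2.4.1]{BNP8}), the isomorphism $\res:\opH^m(G,M^{(s)})\to\opH^m(G({\mathbb F}_{q}),M^{(s)})$ will follow once we show
$$\opH^{m-1}(G, M^{(s)}\otimes H^0(\mu)\otimes H^0(\mu^\star)^{(r)}) = 0 = \opH^{m}(G, M^{(s)}\otimes H^0(\mu)\otimes H^0(\mu^\star)^{(r)})$$
for all $\mu\neq 0$, i.e. $\Ext_G^{m-1}(V(\mu)^{(r)},M^{(s)}\otimes H^0(\mu))=0=\Ext_G^{m}(V(\mu)^{(r)},M^{(s)}\otimes H^0(\mu))$. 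As usual we reduce to $M = L(\sigma)$ with $\sigma\in X(T)_+$, and the condition $r\geq e+f+1$ forces $\langle\sigma,\ta^\vee\rangle \leq p^{r-e-1}-1$.

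The key point in type $A_1$ is that one can do better than Proposition \ref{vanishing} by exploiting the very detailed knowledge of $\opH^\bullet(B_s,\la)$ and $\opH^\bullet(G_s,H^0(\la))$ available in rank one (via \cite{N}, \cite{BNP4}). Specifically, I would run the LHS spectral sequence for $G_r\unlhd G$,
$$E_2^{i,j}=\Ext^i_{G/G_r}(V(\mu)^{(r)},\opH^j(G_r,L(\sigma)\otimes H^0(\mu)))\Rightarrow \Ext_G^{i+j}(V(\mu)^{(r)},L(\sigma)\otimes H^0(\mu)),$$
and untwist the inner term using $\opH^j(G_r,L(\sigma)\otimes H^0(\mu))^{(-r)}\cong \ind_B^G\bigl(\opH^j(B_r,L(\sigma)\otimes\mu)^{(-r)}\bigr)$ (since in type $A_1$ the relevant higher $R^i\ind_B^G$ either vanish or are controlled). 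Then $E_2^{i,j}\neq 0$ would produce a weight $\delta$ of $L(\sigma)$ and a weight of $\opH^j(B_r,\mu_0+p^r\mu_1-?)^{(-r)}$ in such a way that, combining with $\mu\in X(T)_+$, one extracts an inequality of the shape
$$(p^r-1)\langle\mu,\al^\vee\rangle \;\leq\; (\text{something of size }\approx p^{r-1})\;+\;(\text{a }j\text{-dependent correction of size }\approx p^{r-1-\lceil (j-?)/(p-2)\rceil}),$$
where the correction comes from Proposition \ref{Br:weight}(b) (the refined bound $b(\ga)\leq m-(s-t+1)(p-2)+\la_{t-1}$) applied with the right indices. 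Because $\langle\mu,\al^\vee\rangle\geq 1$, for $i+j\in\{m-1,m\}$ and $r\geq e+f+1$ with $e=\lceil (m-1)/(p-2)\rceil$ this forces a numerical contradiction, exactly as in Lemma \ref{L:numerical} but with the sharper exponent. I would isolate this as a numerical sublemma (analogous to Lemma \ref{L:numerical}), proving $p^{t-1}\leq (\text{RHS})$ implies $s\geq t$, and then a final application of Proposition \ref{Gr:weight} (with $\Phi$ of type $A_1$, so $\ta = \al$ and $\la_{t-1}\leq p-1$) closes the argument, the point being that one extra unit of $f+1$ in $r$ pays for one unit of $m$ reduced to $m-1$ in the $e$-bound.

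For part (b), $p=3$, the generic inequality is tight (as the $A_1$, $p=3$ examples after Theorem \ref{theorem:generic4} show), so a separate argument is needed. Here I would use the explicit tensor-product decomposition $L(\sigma)\cong L(\sigma_{r-1})^{(r-1)}\otimes\cdots\otimes L(\sigma_0)$ together with an inductive LHS argument for $B_{r-1}\unlhd B_r$ (as in the last paragraph of the proof of Theorem \ref{theorem:generic4}) to show the relevant $\Ext^1_{B_r}$-groups vanish, now carried one cohomological degree further by feeding in the vanishing of $\opH^{j}(B_r,\la)$ for $j<m-1$ via Proposition \ref{Br:vanishing}(c) under the hypothesis $r\geq m+1+\lfloor\log_3(b(M)+1)\rfloor$. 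The main obstacle I anticipate is precisely the bookkeeping in this rank-one refinement: getting the indices in Proposition \ref{Br:weight}(b) and Proposition \ref{Gr:weight}(b) lined up so that the correction term $\la_{t-1}/(p-2)$ is absorbed correctly and the resulting numerical inequality genuinely forces $s\geq t$ and then a contradiction — together with correctly handling the two boundary subcases $e=(m-1)/(p-2)$ exactly versus $e>(m-1)/(p-2)$, exactly as Proposition \ref{vanishing}(b) splits into Case 1 and Case 2.
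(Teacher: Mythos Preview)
Your reduction via the long exact sequence and the filtration of $N$ is correct, and you have located the right spectral sequences. But from there your plan diverges from the paper's argument and, as you yourself suspect, the bookkeeping does not close.

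The paper does \emph{not} try to sharpen the general machinery of Propositions~\ref{Br:weight}, \ref{Gr:weight}, and Lemma~\ref{L:numerical} for $A_1$. Instead it exploits the fact that in rank one \emph{both} spectral sequences collapse completely: since every weight of $\Ext^\bullet_{B_r}(L(\sigma)^{(e)},\mu)$ has the form $p^r\delta = p^e\gamma + \mu + \Gamma$ with $\Gamma$ a sum of positive roots, one gets $\delta\in X(T)_+$ automatically, so $R^i\ind_B^G$ vanishes for $i>0$; and then $\Ext^\bullet_{G_r}(L(\sigma)^{(e)},H^0(\mu))$ has a good filtration, forcing $E_2^{i,0}=0$ for $i>0$. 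Thus $E_m\cong \Hom_{B/B_r}(V(\mu)^{(r)},\Ext^m_{B_r}(L(\sigma)^{(e)},\mu))$, and nonvanishing gives an \emph{exact} weight equation
\[
(p^r-1)\mu = p^e\gamma + 2\sum_{k=0}^{r}(a_k+b_k)p^k,\qquad m=\sum_{k=0}^{r}(2a_k+b_k),
\]
not merely an inequality. The key step is then a $p$-adic digit-sum argument: writing $(p^r-1)\mu = \sum d_ip^i$, one shows $r(p-1)\le \sum d_i \le m + r + (p-2)f$, whence $(r-f)(p-2)\le m$, which contradicts $r\ge e+f+1$ when $p\ge 5$. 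This digit-sum lower bound is the new idea, and it is not something your proposed route through Propositions~\ref{Br:weight} and~\ref{Gr:weight} would recover---those give upper bounds on $b(\gamma)$, not constraints on the digit sum of $(p^r-1)\mu$.

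For $p=3$, your plan to run an inductive $B_{r-1}\unlhd B_r$ argument ``carried one cohomological degree further'' is not what is needed and would not work: that device in Theorem~\ref{theorem:generic4} handled a single exceptional pair $(\mu,\sigma)$ at $m=1$, not a range of degrees. The paper instead stays inside the same digit-sum framework, observes that the hypothesis forces $r-f=m$ and hence equality in the bound $\sum d_i\le m+r+(p-2)f$, which pins down $a_i=0$, $\mu=1$, and $\gamma=p^f-1$; but then $\log_3(b(M)+1)=f$ is an integer, so the floor in the hypothesis $r\ge m+1+\lfloor\log_3(b(M)+1)\rfloor$ gives $r-f\ge m+1$, a contradiction.
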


\begin{proof} In the type $A_1$-case, per Section \ref{SS:notation}, a weight $\si \in X(T)$ will be considered as an integer.
As in Theorem~\ref{theorem:generic4}, we are reduced to considering 
$$E_{m}=\text{Ext}^{m}_{G}(V(\mu)^{(r)},L(\sigma)^{(e)}\otimes H^{0}(\mu))$$ for $\mu, \si \in X(T)_+$ with $\mu\neq 0$. First, consider the case when $p\geq 5$.  We want to show that $E_{m}$ is zero when 
$r\geq  e  +\lceil\log_p (\sigma+1)\rceil+1$. 
From the LHS spectral sequence we have 
\begin{equation} \label{ss1}
E_{2}^{i,j}=\text{Ext}^{i}_{G/G_{r}}(V(\mu)^{(r)}, \text{Ext}^{j}_{G_{r}}(L(\sigma)^{(e)},H^{0}(\mu)))\Rightarrow 
\text{Ext}^{i+j}_{G}(V(\mu)^{(r)},L(\sigma)^{(e)}\otimes H^{0}(\mu)). 
\end{equation} 

Consider the $G$-module $\text{Ext}_{G_{r}}^{\bullet}(L(\sigma)^{(e)},H^{0}(\mu))$. There exists a spectral sequence \cite[II.12.2]{Jan}
\begin{equation} \label{ss2}
\widetilde{E}_{2}^{i,j}=R^{i}\text{ind}_{B^{(r)}}^{G^{(r)}}\ \text{Ext}^{j}_{B_{r}}(L(\sigma)^{(e)},\mu)\Rightarrow 
\text{Ext}^{i+j}_{G_{r}}(L(\sigma)^{(e)},H^{0}(\mu)). 
\end{equation} 
Next we need to analyze the $T$-weights of $\text{Ext}^{\bullet}_{B_{r}}(L(\sigma)^{(e)},\mu)$. These weights 
are $T_{r}$-invariant weights of the form $p^{r}\delta=p^{e}\gamma+\mu+\Gamma$ where 
$\gamma$ is a weight of $L(\sigma)^{*}$ and $\Gamma$ is a weight of $\text{H}^{m}(U_{r},k)$. Since we are in type $A_{1}$, $\mu+\Gamma\in X(T)_{+}$. Furthermore, $\gamma$ is a weight of $L(\sigma)$ so 
$-p^{f}<\gamma < p^{f}$ and 
$$-p^{r}<-p^{e+f} <p^{e}\gamma < p^{e+f} <p^{r}.$$ 
Hence, $- p^r < p^r\delta$ and so $\delta > 0$.  In other words,
$\delta$ and $p^{r}\delta$ must be in $X(T)_{+}$. 

It follows that $\widetilde{E}_{2}^{i,j}=0$ for $j>0$ and the spectral sequence (\ref{ss2}) collapses and yields, for $n\geq 0$, 
\begin{equation} 
\text{Ext}^{n}_{G_{r}}(L(\sigma)^{(e)},H^{0}(\mu))\cong \text{ind}_{B^{(r)}}^{G^{(r)}}\text{Ext}^{n}_{B_{r}}(L(\sigma)^{(e)},\mu). 
\end{equation} 
Moreover, $\text{Ext}^{\bullet}_{G_{r}}(L(\sigma)^{(e)},H^{0}(\mu))$ has a good filtration. The spectral sequence (\ref{ss1}) collapses 
and 
\begin{equation} 
E_{m}\cong \text{Hom}_{G/G_{r}}(V(\mu)^{(r)}, \text{Ext}^{m}_{G_{r}}(L(\sigma)^{(e)},H^{0}(\mu)))\cong  \text{Hom}_{B/B_{r}}(V(\mu)^{(r)}, \text{Ext}^{m}_{B_{r}}(L(\sigma)^{(e)},\mu)). 
\end{equation} 

Recall from Sections \ref{SS:spectral} and \ref{SS:weight} that a typical weight of $\text{Ext}^{m}_{B_{r}}(L(\sigma)^{(e)},\mu)$ is of the form 
$$p^{e}\gamma+\mu+2(a_{0}+b_{0})+2(a_{1}+b_{1})p+\dots+2(a_{r}+b_{r})p^{r}$$ 
where $m=\sum_{k=0}^{r}(2a_{k}+b_{k})$ for nonnegative integers $a_i$, $b_i$ with $a_{0}=0=b_{r}$. Note also that, since we are in type $A_1$, each $b_i \leq 1$, and so $b_i \in \{0, 1\}$.  If $E_{m}\neq 0$, then 
$$p^{r}\mu=p^{e}\gamma+\mu+2(a_{0}+b_{0})+2(a_{1}+b_{1})p+\dots+2(a_{r}+b_{r})p^{r}$$
or equivalently 
\begin{equation}\label{Eq:muexpansion}
(p^{r}-1)\mu=p^{e}\gamma+2(a_{0}+b_{0})+2(a_{1}+b_{1})p+\dots+2(a_{r}+b_{r})p^{r}.
\end{equation}
Write $(p^{r}-1)\mu=\sum_{i\geq 0}d_{i}p^{i}$ in its $p$-adic expansion. By using induction on $r$, we have 
\begin{equation}\label{Eq:sumlowerbound}
r(p-1)\leq  \sum_{i\geq 0}d_{i}.
\end{equation}
Moreover, using (\ref{Eq:muexpansion}), $m = \sum_{k=0}^{r}(2a_{k}+b_{k})$, and $\ga < p^f$, one can show that  
\begin{equation}\label{Eq:sumupperbound}
\sum_{i\geq 0} d_{i}\leq m+r+(p-2)f.
\end{equation}
Combining (\ref{Eq:sumlowerbound}) and (\ref{Eq:sumupperbound}) yields 
\begin{equation}\label{Eq:r-fbound}
(r-f)(p-2)\leq m. 
\end{equation} 
On the other hand, by our hypothesis,
$$r\geq e+f+1 \geq (m-1)/(p-2) +f +1$$
or $(r-f-1)(p-2)\geq (m-1)$. Combining this with (\ref{Eq:r-fbound}) yields $(p-2) \leq 1$ which is a contradiction. 

Now assume that $p=3$. We are assuming that $r\geq m+1+ \lfloor \log_3 (b(M)+1) \rfloor$ or $r - (1 + \lfloor \log_3 (b(M)+1) \rfloor) \geq m$.  Since $f = \lceil \log_3(b(M) + 1)\rceil \leq 1 + \lfloor \log_3 (b(M)+1)\rfloor$, we have $r - f \geq m$.

We can use the prior analysis to say that, if $E_{m}\neq 0$, then 
\begin{equation}\label{Eq:muexpansion2}(p^{r}-1)\mu=p^{m-1}\gamma+2(a_{0}+b_{0})+2(a_{1}+b_{1})p+\dots+2(a_{r}+b_{r})p^{r}.
\end{equation}
Further, knowing that $r \geq m$, (\ref{Eq:r-fbound}) still holds.  With $p = 3$, this becomes $r - f \leq m$.  Combining that with the condition $r - f \geq m$ forces $r - f = m$. 

When $r-f=m$, we must have equality in (\ref{Eq:r-fbound}) and hence in (\ref{Eq:sumupperbound}).  However, equality in (\ref{Eq:sumupperbound}) requires that $\sum_{i = 0}^{r}b_i = m$.  Combining that with $m=\sum_{i=0}^{r}(2a_{i}+b_{i})$ implies that $a_{i}=0$ for all $i$. Therefore, (\ref{Eq:muexpansion2}) becomes
$$(p^{r}-1)\mu=p^{r-f-1}\gamma+2(b_{0}+b_{1}p+\dots+b_{r-1}p^{r-1}).$$
As $\mu \neq 0$, the left hand side of this equation is greater than $p^{r}-1$. Since $0\leq b_{i} \leq 1$ for all $i$, it follows that the right hand side is 
less than $2(p^{r}-1)$. Consequently, $\mu=1$. 

Now the $p$-adic expansion of $(p^{r}-1)-2(b_{0}+b_{1}p+\dots+b_{r-1}p^{r-1})$ has $f=r-m$ slots with $(p-1)$, thus 
$\gamma=(p-1)+(p-1)p+\dots +(p-1)p^{f-1} = p^f - 1$.  Hence, $b(M) = p^f - 1$, and we deduce that $\log_3(b(M) + 1) = f$ (an integer).   Therefore,
$\lfloor \log_3 (b(M)+1) \rfloor = f$ and our hypothesis becomes $r\geq m+1+f$ or $r-f\geq m+1$. This contradicts the fact that $r-f=m$. 
\end{proof} 

\subsection{Comparison with the bounds in \cite{CPSvdK}}

In this section, we compare the bounds in Theorems \ref{theorem:generic3}, \ref{theorem:generic4}, and \ref{theorem:generic5} with the Main Theorem of \cite{CPSvdK}.   To do so, we introduce some further notation. Set\\
\\
$c$:  the maximum $m_i$, if $\ta = \sum_i m_i\al_i$, where the sum is over all simple roots $\al_i$.\\
$t$: the exponent of the finite abelian group $X(T)/ \mathbb{Z}\Phi.$\\
\\
The values of $c$ and $t$ for each root system are given in the following table. \\

 \begin{center}
\begin{tabular}{|l|r|r|r|}
$\Phi$&$c$&$t$&$c\cdot t$\\
\hline
$A_n$&$1$&$n+1$&$n+1$\\
$B_n$&$2$&$2$&$4$\\
$C_n$&$2$&$2$&$4$\\
$D_n$&$2$&$2$&$4$\\
$E_6$&$3$&$3$&$9$\\
$E_7$&$4$&$2$&$8$\\
$E_8$&$6$&$1$&$6$\\
$F_4$&$4$&$1$&$4$\\
$G_2$&$3$&$1$&$3$\\
\end{tabular}
\end{center}
\ \\
\ \\
For any dominant weight $\la \in X(T)_+$, set\\\\
$c(\la): $ the maximum $m_i,$ if $\la = \sum_i m_i \alpha_i.$ Note that our $c(\la)$ might be a rational number.\\
$t_p(\la): $ the $p$-part of the order of the image of $\la$ in $X(T)/\mathbb{Z}\Phi.$\\
$d(\la): $ the inner product $\langle \la , \ta^{\vee} \rangle.$\\\\
Note that 
$$d(\la) = 
\begin{cases} m_1 &\mbox{ if $\Phi$ of type }  C_n, E_7, F_4,\\
m_2 &\mbox{ if $\Phi$ of type } B_n, D_n, E_6, G_2\\
m_1 + m_n &\mbox{ if $\Phi$ of type } A_n, n\geq 2.\\
2m_1&\mbox{ if $\Phi$ of type } A_1.
\end{cases}
$$
Note
that $d(\la) \in \mathbb{Z}.$ 
In addition $d(\la) \leq c(\la)$ unless $\Phi $ is of type $A_n.$ In the latter cases $d(\la) \leq  2\cdot c(\la).$\\\\
For any finite dimensional $G$-module $M$, set\\\\
$c(M): $ the maximum of $c(\la)$ for all weights $\la$ of $M.$ \\
$d(M): $ the maximum of $d(\la)$ for all weights $\la$ of $M.$ \\
\\

To facilitate a comparison with \cite{CPSvdK}, we continue to follow the same notation as above.  Both works determine numbers $e, f$ so that if $s \geq e$ and $r \geq \lfloor e\rfloor + f + 1$, then
$$
\opH^m(G({\mathbb F}_{q}),M) \cong \opH^m(G,M^{(s)}).
$$
The tables below give the values of $e$ and $f$ from the two sources.  Note that while $f$ is an integer in both cases, we allow $e$ to be a rational number.  More significantly, we note that the value of ``$f$'' 
appearing in \cite{CPSvdK} is actually $f + 1$ for the $f$ given below.  This difference is accounted for in our condition $r \geq \lfloor e\rfloor + f + 1$, whereas it is stated as $r \geq e + f$ in \cite{CPSvdK}.
\\
\newpage
\begin{center}
 {\bf Comparison for $p=2$}
 \ \\
 \ \\
\begin{tabular}{|l|r|r|}
\hline
&\cite[(6.6) Theorem]{CPSvdK}&Theorem \ref{theorem:generic3}(a) (BNP)\\
&                                               &                                    \\
\hline
&&\\
$e$&$\max \{ctm-1, 0\}$&$m$\\
&&\\
\hline
&&\\
$f$&$\lfloor \log_2( t \cdot c(M)+1)\rfloor +1$&$\lceil \log_2(d(M)+1)\rceil $\\
&&\\
\hline
\end{tabular}
\end{center}
\ \\
\ \\
 \begin{center}
 {\bf Comparison for $p$ odd}
 \ \\
 \ \\
\begin{tabular}{|l|r|r|}
\hline
&\cite[(6.6) Theorem]{CPSvdK}&Theorem \ref{theorem:generic3}(b) (BNP)\\
&                                                &                                   \\
\hline
&&\\
$e$&$\max \{\lfloor \frac{ctm-1}{p-1} \rfloor , \lfloor \frac{ct_p(\la)(m-1)-1}{p-1} \rfloor+1\} $&$\frac{m}{p-2}$\\
&&\\
\hline
&&\\
$f$&$\lfloor \log_p( t \cdot c(M)+1)\rfloor +1$&$\lceil \log_p(d(M)+1)\rceil $\\
&&\\
\hline
\end{tabular}
\end{center}
\ \\
\ \\
{\bf Conclusion: } The bound for $f$ in Theorem \ref{theorem:generic3} is always less than or equal to the bound given in \cite{CPSvdK}.
Similarly, the bound for $e$ given in Theorem \ref{theorem:generic3} is  less than or equal to the bound in \cite{CPSvdK},
with  some exception in the odd prime case when $m=1$ or $\Phi$ is of type $A_1$. In these cases Theorem \ref{theorem:generic4} and Theorem \ref{theorem:generic5} provide new and improved bounds.



\begin{remark}
Note that all the theorems in Section \ref{S:generic} do not involve the constants $c$ and $t$ which depend on the root system. 

It follows that for  type $A_n$ and $M$ not a sum of trivial modules,  the difference between the bound for $f$ in \cite{CPSvdK} minus  the  bound given in Theorem \ref{theorem:generic3} is at least $\lfloor\log_p{((n+1)/2)}\rfloor.$
For $p=2$ and type $A_n$, the bound for $e$ in \cite{CPSvdK} is at least  $n$ times the bound given in Theorem \ref{theorem:generic3}. 
Similarly, for p odd, type $A_n$, $n\geq 2$, and $m \geq p-1$, the bound for $e$ in \cite{CPSvdK} is at least  $n/2$ times the bound given in Theorem \ref{theorem:generic3}.

\end{remark}


\providecommand{\bysame}{\leavevmode\hbox
to3em{\hrulefill}\thinspace}

\end{document}